\documentclass[12pt]{article}
\usepackage{amsmath}
\usepackage{amssymb}
\usepackage{amsthm}
\usepackage{latexsym}
\usepackage{cite}
\usepackage{psfrag}
\usepackage{epsfig}
\usepackage{graphicx}
\usepackage[latin1]{inputenc}
\usepackage{url}
\usepackage{mathptmx}
\usepackage{amsfonts}
\usepackage{amscd}
\usepackage{verbatim}
\usepackage[misc]{ifsym}
\usepackage{mathrsfs}

\textwidth160mm \textheight208mm \oddsidemargin0mm

\makeatletter
\@addtoreset{equation}{section}
\makeatother

\newcommand{\PG}{\mathrm{PG}}

\newcommand{\F}{\mathbb{F}}
\newcommand{\Hh}{\mathbf{H}}
\newcommand{\B}{\mathbf{B}}
\newcommand{\W}{\mathbf{W}}
\newcommand{\bb}{\mathbf{b}}
\newcommand{\cc}{\mathbf{c}}
\newcommand{\h}{\mathbf{h}}
\newcommand{\f}{\mathbf{f}}
\newcommand{\s}{\mathbf{s}}
\newcommand{\tT}{\mathbf{t}}
\newcommand{\w}{\mathbf{w}}
\newcommand{\x}{\widehat{x}}
\newcommand{\M}{\mathcal{M}}

\newtheorem{theorem}{Theorem}
\newtheorem{lemma}{Lemma}

\newtheorem{proposition}{Proposition}
\newtheorem{conjecture}{Conjecture}

\newtheorem{problem}{Problem}
\theoremstyle{definition}

\newtheorem{definition}{Definition}
\newtheorem{remark}{Remark}

 \begin{document}
\title{New covering codes of radius $R$, codimension $tR$ and $tR+\frac{R}{2}$, and saturating sets in projective spaces}
\date{}
\author{ Alexander A. Davydov\footnote{The research of A.A.~Davydov was done at IITP RAS and supported by the Russian Government (Contract No 14.W03.31.0019).} \\
{\footnotesize Institute for Information Transmission Problems
(Kharkevich
institute), Russian Academy of Sciences}\\
{\footnotesize Bol'shoi Karetnyi per. 19, Moscow,
127051, Russian Federation. E-mail: adav@iitp.ru}
\and Stefano Marcugini\footnote{The research of  S. Marcugini and F.~Pambianco was supported in part
by the Italian National Group for Algebraic and Geometric Structures and their Applications (GNSAGA - INDAM)
and by University of Perugia, (Project: "Strutture Geometriche, Combinatoria e loro Applicazioni", Base Research Fund 2017).} and Fernanda Pambianco$^\dag$  \\
{\footnotesize Dipartimento di Matematica e Informatica,
Universit\`{a}
degli Studi di Perugia, }\\
{\footnotesize Via Vanvitelli~1, Perugia, 06123, Italy. E-mail:
\{stefano.marcugini,fernanda.pambianco\}@unipg.it}}
\maketitle
\textbf{Abstract.}
The length function $\ell_q(r,R)$ is the smallest
length of a $ q $-ary linear code of codimension $r$ and covering radius $R$. In this work we obtain new constructive upper bounds on
$\ell_q(r,R)$ for all $R\ge4$, $r=tR$, $t\ge2$, and also for all even $R\ge2$, $r=tR+\frac{R}{2}$, $t\ge1$. The new bounds are provided by infinite families of new covering codes with fixed $R$ and increasing codimension. The new bounds  improve upon the known ones.

We propose a general regular construction (called ``Line+Ovals'') of a minimal $\rho$-saturating $((\rho+1)q+1)$-set in the projective space $\PG(2\rho+1,q)$ for all $\rho\ge0$. Such a set corresponds to an $[Rq+1,Rq+1-2R,3]_qR$ locally optimal\footnote{See the definitions in Sect. \ref{sec_intr}.} code of covering radius $R=\rho+1$.
Basing on combinatorial properties of these codes regarding to spherical capsules, we give constructions for code codimension lifting and obtain infinite families of new surface-covering$^1$ codes with codimension $r=tR$, $t\ge2$.

In addition, we obtain new 1-saturating sets in the projective plane $\PG(2,q^2)$ and, basing on them, construct infinite code families  with fixed even radius $R\ge2$ and codimension $r=tR+\frac{R}{2}$, $t\ge1$.

\textbf{Keywords:} Covering codes, saturating sets, the length function, upper bounds, projective spaces.

\textbf{Mathematics Subject Classification (2010).} 51E21, 51E22, 94B05

\section{Introduction}\label{sec_intr}
Let $\F_{q}$ be the Galois field with $q$ elements, $\F_{q}^*=\F_{q}\setminus\{0\}$. Let $\F_{q}^{\,n}$ be the $n$-dimensional vector space over $\F_{q}.$ Denote by $[n,n-r]_{q}$ a $q$-ary
linear code of length $n$ and codimension (redundancy)~$r$, that is a
subspace of $\F_{q}^{\,n}$ of dimension $n-r.$  

Let $d(v,c)$ be the Hamming distance between vectors $v$ and $c$ of $\F_{q}^{\,n}$. The \emph{sphere of radius $R$} with center $c$ in $\F_{q}^{\,n}$
is the set $\{v:v\in \F_{q}^{\,n},$ $d(v,c)\leq R\}$. For $0\leq \ell \leq R$, a \emph{spherical $(R,\ell)$-capsule} with center $c$ in $\F_{q}^{\,n}$
is the set $\{v:v\in \F_{q}^{\,n},$ $\ell\le d(v,c)\leq R\}$ \cite[Rem.\,5]{DavPPI}, \cite[Rem.\,2.1]{Dav95}, \cite[Sect.\,2]{DGMP-AMC}. An \emph{$(R,R)$-capsule is the surface of a sphere of radius $R$.}

\begin{definition} \label{Def1_CoverRad}
 A linear $[n,n-r]_{q}$ code has \emph{covering radius} $R$ and is denoted as an $[n,n-r]_{q}R$ code if any of the following equivalent properties holds:

  \textbf{(i)} The value $R$
 is the least integer such that the space $\F_{q}^{\,n}$ is covered by
the spheres of radius $R$ centered at the codewords.

\textbf{(ii)}
Every column of $\F_{q}^{\,r}$ is equal to a linear combination of at most $R$ columns
of a parity check matrix of the code, and $R$ is the smallest value with
this property.
\end{definition}

An $[n,n-r]_{q}R$ code of minimum distance $d$ is denoted by $[n,n-r,d]_{q}R$ code. For an introduction to coverings of Hamming
spaces, see \cite{Handbook-coverings,CHLS-bookCovCod}.
For fixed $q,r$, and $R$, the covering quality of an $[n, n-r]_{q}R$ code is better if its length~$n$ is smaller.

\begin{definition} \cite{Handbook-coverings,CHLS-bookCovCod}
\emph{The length function} $\ell_q(r,R)$ is the smallest
length of a $ q $-ary linear code of codimension~$r$ and covering radius $R$.
\end{definition}

 It can be shown, see e.g. \cite{BDGMP-R2R3CC_2019,DGMP-AMC}, that
 if  code length $n$ is considerably larger than $R$ (this is the natural case in covering codes investigations) and if $q$ is large enough, then there is a lower bound of the form $\ell_q(r,R)\gtrsim cq^{(r-R)/R},$
 where $c$ is independent of $q$ but it is possible that $c$  depends  on $r$ and $R$.

Let $t,s,R^*$ be integers. Let $q'$ be a prime power.   Consider the following cases:
  \begin{align}\label{eq1_situat}
  \textbf{(i)}~ r=tR,~  \text{arbitrary }q.~~ \textbf{(ii)}~
    R=sR^*,~ r=tR+s,~ q=(q')^{R^*}.~~\textbf{(iii)}~r\ne tR,~ q=(q')^R.
  \end{align}
In \cite{DavCovRad2,DGMP_ACCT2008,DGMP-AMC,DavOst-IEEE2001}, for all the cases in \eqref{eq1_situat},  codes with  lengths  close (by order) to the bound  $\ell_q(r,R)\gtrsim cq^{(r-R)/R}$  are obtained. These lengths are upper bounds on $\ell_q(r,R)$.

\emph{The  goal of this paper} is to improve  on the known upper bounds on $\ell_q(r,R)$ in the case (i) of \eqref{eq1_situat} for $R\ge4$ and in the case (ii) of \eqref{eq1_situat} for even $R$ with $R^*=2$.

The following properties of codes are useful for obtaining new bounds.

\begin{definition}\label{def1_LO} \cite{DFMP-IEEE-LO}
  A linear covering code is called \emph{locally optimal}
if one cannot remove any column\,from its parity check matrix without an increase in covering radius.
\end{definition}

\begin{definition}\label{def1_R,l code} \cite{DavPPI}, \cite[Sect.\,2]{Dav95}, \cite[Sect,\,2]{DGMP-AMC} Let $0\leq \ell \leq R$. An $[n,n-r]_{q}R$ code  is called an \emph{$(R,\ell)$-object} and is denoted by $[n,n-r]_{q}R,\ell$ code if any of the following equivalent conditions holds:

\textbf{(i)} The space $\F_{q}^{\,n}$ is covered by the spherical $(R,\ell)$-capsules centered at the codewords.

\textbf{(ii)} Every column of the space $\F_{q}^{\,r}$ (including
the zero column) is equal to a linear combination with \emph{nonzero
coefficients} of at least $\ell $ and at most $R$ distinct columns of
a parity-check matrix of the code.

\textbf{(iii)} Every coset of the code (including the code itself) contains a weight $w$ word of the space $\F_{q}^{\,n}$ such that $\ell\le w\le R$.
\end{definition}
\begin{definition}\label{def1_SurfCov}
  An $[n,n-r]_{q}R,R$ code is called \emph{surface-covering code} of radius $R$.
\end{definition}

Note that the space $\F_{q}^{\,n}$ is covered by \emph{the surfaces} of the spheres of radius $R$ centered at the codewords of an $[n,n-r]_{q}R,R$ surface-covering code.

  Codes with radius $R=2,3$ and codimension $r=tR$ have been widely investigated, see
  \cite{DavParis,Dav95,DavCovRad2,DFMP-IEEE-LO,DGMP-AMC,DGMP_ACCT2008,DavOst-IEEE2001,DavOstEJC,DavOst-DESI2010} and the references therein. At the same time,
  codes with $R\ge4$, $r=tR$, have not been extensively studied. The main known results for codes with $R\ge4$, $r=tR$, are available in \cite{DGMP_ACCT2008,DGMP-AMC,DavOst-IEEE2001} and collected in Proposition~\ref{prop1_R>=4}.
 \begin{proposition}\label{prop1_R>=4}
  \emph{\cite{DGMP_ACCT2008},\cite[Ths. 6.1,6.2, eqs. 6.1,6.2]{DGMP-AMC},\cite{DavOst-IEEE2001}} The following constructive upper bounds on the length function hold:
\begin{align}\label{eq1_known bound}
&\ell_q(r,R)\le Rq^{(r-R)/R}+\left\lceil \frac{R}{3}\right\rceil q^{(r-2R)/R}+\delta_q(r,R),~R\ge4,~r=tR,~t\ge2,
\end{align}
where $\delta_q(r,R)=0$ if $q\ge4,r=2R$, or $q=16,q\ge23,r=3R$, or $q\ge7,q\neq9,r\ge5R,r\ne6R$. Also, $\delta_q(r,R)=(2R\bmod3)\cdot(q^{(r-3R)/R}+1)$ if $q\ge7,q\neq9,r=4R,6R$.
 \end{proposition}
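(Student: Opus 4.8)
The statement collects constructive upper bounds, so the natural route is purely constructive: for each admissible triple $(q,r,R)$ I would exhibit an explicit $[n,n-r]_qR$ code whose length $n$ matches the claimed right-hand side. By Definition~\ref{Def1_CoverRad}(ii) this reduces to building an $r\times n$ parity-check matrix $H$ over $\F_q$, with $r=tR$, such that every vector of $\F_q^{\,r}$ is an $\F_q$-linear combination of at most $R$ columns of $H$, while keeping $n$ as small as asserted. I would prove the whole infinite family in two stages: first a \emph{seed} construction realizing the smallest codimension $r=2R$, and then an iterable \emph{codimension-lifting} (a $q^m$-concatenating) construction that raises the codimension in steps of $R$.

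For the seed, take $t=2$, so $(r-R)/R=1$ and $(r-2R)/R=0$ and the target length is $Rq+\lceil R/3\rceil+\delta_q(2R,R)$ with $\delta_q(2R,R)=0$ for $q\ge4$. A covering-radius-$R$ code of codimension $2R$ is equivalent to an $(R-1)$-saturating set in $\PG(2R-1,q)$, so I would place $R$ oval/conic-like configurations, each of about $q$ points (this accounts for the $Rq$ columns), arranged so that every point of $\PG(2R-1,q)$ lies in the span of at most $R$ chosen points; the remaining points that are not yet reachable within budget $R$ are patched by an auxiliary block of $\lceil R/3\rceil$ extra columns. The factor $\tfrac13$ reflects that, through a line/conic incidence, each auxiliary column can simultaneously repair three of the $R$ directions. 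The existence conditions for these ovals and saturating configurations are exactly the restrictions on $q$ (here $q\ge4$).

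For the lifting, I would use the fact that the seed can be taken to be an $(R,\ell)$-object in the sense of Definition~\ref{def1_R,l code} (indeed a surface-covering code, Definition~\ref{def1_SurfCov}), which is what guarantees that concatenation preserves the covering radius \emph{exactly} $R$ rather than lowering it. Given an $[n_0,n_0-r_0]_qR$ code with this capsule structure, the $q$-concatenating step produces an $[n_1,n_1-(r_0+R)]_qR$ code with $n_1\approx q\,n_0$, by adjoining to $q$ scaled copies of $H_0$ a fresh block of $R$ rows together with the columns needed to realize all combinations reaching into the new coordinates within budget $R$. Iterating this $t-2$ times starting from the seed turns $Rq+\lceil R/3\rceil$ into $Rq^{t-1}+\lceil R/3\rceil q^{t-2}+\dots$, i.e. the two main claimed terms, and the accumulated lower-order overhead is collected into $\delta_q(r,R)$.

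The main obstacle is not the leading term $Rq^{t-1}$, which falls out of the $R$-block structure and the multiplicative scaling, but the precise control of the second and lower-order terms. Pinning down the exact coefficient $\lceil R/3\rceil$ and the explicit $\delta_q(r,R)$ — including the factor $2R\bmod3$ and the sporadic restrictions $q=16$, $q\ge23$, $q\neq9$ — requires tracking how the auxiliary ``overhead'' columns of each concatenation step propagate through the iteration, and certifying that the required geometric seeds exist for each residue of $r$ modulo $3R$ and for each of the listed small $q$. Matching the modular behavior of $2R\bmod3$ to the triple-saving mechanism and verifying these existence conditions is where essentially all the real work lies.
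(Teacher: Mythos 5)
The paper does not actually prove Proposition~\ref{prop1_R>=4} --- it is quoted from \cite{DGMP_ACCT2008,DGMP-AMC,DavOst-IEEE2001} --- but the paper tells you exactly what the underlying mechanism is (see the discussion before Problem~\ref{probl2}): the \emph{direct sum} of complete infinite families of radius-$2$ and radius-$3$ codes. Write $R=2a+3b$ with $b$ maximal. Putting parity-check matrices in block-diagonal form, the direct sum of $a$ codes with $R=2$, codimension $2t$, and $b$ codes with $R=3$, codimension $3t$, has codimension $2ta+3tb=tR$ and covering radius exactly $2a+3b=R$ (a syndrome splits blockwise, and each block must be covered inside its own constituent). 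Its length is $a\bigl(2q^{t-1}+q^{t-2}+\cdots\bigr)+b\bigl(3q^{t-1}+q^{t-2}+\cdots\bigr)=Rq^{t-1}+(a+b)q^{t-2}+\cdots$, and one checks $a+b=\lceil R/3\rceil$ and $a=2R\bmod 3$. This single observation explains every feature you flagged as mysterious: the coefficient $\lceil R/3\rceil$ is the minimal number of summands, the factor $2R\bmod3$ in $\delta_q(r,R)$ is the number of radius-$2$ summands (each of which carries an extra term $q^{t-3}+1$ at codimensions $8$ and $12$, i.e.\ $r=4R,6R$), and the conditions $q\ge4$, $q=16,q\ge23$, $q\ge7,q\ne9$ are inherited verbatim from the known $R=2,3$ families at the corresponding codimensions. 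No new geometry and no modular case analysis is needed beyond citing those families.

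Your route --- a seed at $r=2R$ followed by iterated $q^m$-concatenation of the radius-$R$ code itself --- is in fact the strategy \emph{this} paper uses for its new bounds (Sects.~\ref{sec_constr}--\ref{sec5_inf_fam}), but as a proof of Proposition~\ref{prop1_R>=4} it has genuine gaps. First, your seed is asserted, not constructed: ``each auxiliary column can simultaneously repair three of the $R$ directions'' names no mechanism and comes with no saturation proof; the honest realization of length $Rq+\lceil R/3\rceil$ is again the direct sum of $\lceil R/3\rceil$ blocks of type ``oval plus line'' / ``two ovals plus line''. Second, the iteration ``in steps of $R$'' is impossible: Constructions QM$_1$/QM$_2$ require $q^m\ge n_0-1$, and your seed has $n_0\approx Rq>q$, so $m=1$ is never admissible; with the forced $m\ge2$ the codimensions $r=3R$, and more generally $r=tR$ with $3\le t<\lceil\log_q R\rceil+3$, are unreachable from the seed --- precisely the cases where the proposition carries its special conditions and its $\delta$-term. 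This obstruction is visible in the paper's own Theorem~\ref{th5_odd}, whose conclusion holds only for $t=2$ and $t\ge\lceil\log_q R\rceil+3$, whereas Proposition~\ref{prop1_R>=4} has no such gap, because the direct-sum method inherits completeness in the codimension from the $R=2,3$ families. Third, $q^m$-concatenation preserves covering radius exactly $R$ only when the seed is an $(R,R)$- or $(R,R-1)$-object; establishing that property is the hard content (cf.\ Lemma~\ref{lem5} for the paper's own seed), and you assume it without argument.
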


The main known results for codes with even covering radius $R\ge2$ and codimension $r=tR+\frac{R}{2}$ are available in \cite{DavCovRad2,DGMP_ACCT2008,DGMP-AMC} and collected in Proposition~\ref{prop1_evenR}.
  \begin{proposition}\label{prop1_evenR}
\emph{\cite[Ex.\,6, eq.\,(33)]{DavCovRad2}, \cite{DGMP_ACCT2008}, \cite[Sects.\,4.4,\,7]{DGMP-AMC}}
 Let $q'$ be a prime power. Let the covering radius $R\ge2$ be even. Let the code codimension be $r=tR+\frac{R}{2}$ with integer~$t$. The following constructive upper bounds on the length function hold:
\begin{align}
&\ell_q(r,R)\le\frac{R}{2}\left(3-\frac{1}{\sqrt{q}}\right)q^\frac{r-R}{R}+\frac{R}{2}\left\lfloor q^{(r-2R)/R-0.5}\right\rfloor,~ q=(q')^2\ge16,~t\ge1;\displaybreak[3]\label{eq1_h=1}\\
&\ell_q(r,R)\le R\left(1+\frac{1}{\sqrt[4]{q}}+\frac{1}{\sqrt{q}}\right)q^\frac{r-R}{R}+\frac{R}{2}\left\lfloor q^{(r-2R)/R-0.5}\right\rfloor,~ q=(q')^4,~t\ge1;\displaybreak[3]\label{eq1_h=2}\\
&\ell_q(r,R)\le R\left(1+\frac{1}{\sqrt[6]{q}}+\frac{1}{\sqrt[3]{q}}+\frac{1}{\sqrt{q}}\right)q^{(r-R)/R}+R\left\lfloor q^{(r-2R)/R-0.5}\right\rfloor,~ q=(q')^6,\label{eq1_h=3}\displaybreak[3]\\
&q'\le73 \mbox{ prime},~t\ge1,~t\ne4,6.\notag
\end{align}
  \end{proposition}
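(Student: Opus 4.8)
The three inequalities are \emph{constructive}, so the plan is to exhibit, for each of the three field shapes $q=(q')^2$, $q=(q')^4$, $q=(q')^6$, an explicit infinite family of $[n,n-r]_qR$ codes with $r=tR+\frac R2$ whose lengths $n$ meet the right-hand sides of \eqref{eq1_h=1}, \eqref{eq1_h=2}, \eqref{eq1_h=3}. Following the standard two-stage methodology for bounds of this shape, I would first build a good ``seed'' code realizing the base case $t=1$, i.e.\ codimension $r_0=\frac{3R}{2}$, and then apply a codimension-lifting construction that raises the codimension by $R$ while multiplying the length by roughly $q$, iterating $t-1$ times.

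For the seed, observe that the target length when $t=1$ is $\frac R2\bigl(3-\frac1{\sqrt q}\bigr)\sqrt q=\frac R2(3q'-1)$ in \eqref{eq1_h=1}, with analogous but smaller values for \eqref{eq1_h=2}, \eqref{eq1_h=3}. The coefficient $\frac R2$ signals that the seed should be assembled from $\frac R2$ radius-$2$ blocks, each of codimension $3$, whose direct sum has radius $R=2\cdot\frac R2$ and codimension $\frac{3R}{2}$. Each block is the code associated to a small $1$-saturating set in $\PG(2,q)$; the three bounds correspond to three increasingly efficient subgeometry constructions of such sets, of sizes about $3q'-1$ for \eqref{eq1_h=1} and smaller for \eqref{eq1_h=2}, \eqref{eq1_h=3}, valid respectively when $q$ is a square, a fourth power, and a sixth power (the finer the subfield tower, the smaller the saturating set). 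Crucially, I would verify that each block is not merely a covering code but an $(R,\ell)$-object in the sense of Definition~\ref{def1_R,l code} with $\ell$ large enough (ideally the surface-covering property of Definition~\ref{def1_SurfCov}), since this is exactly the extra structure the lifting step will consume.

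For the lifting, I would invoke the $q^m$-concatenating construction phrased through spherical capsules: given an $[n_{t},n_t-(tR+\frac R2)]_qR$ code that is a suitable $(R,\ell)$-object, one produces an $[n_{t+1},n_{t+1}-((t+1)R+\frac R2)]_qR$ code with $n_{t+1}\le q\,n_t+c$, where the additive constant $c$ is governed by the seed parameters. Iterating from $t=1$, the dominant part of the length is $n_1\cdot q^{t-1}$, which yields the leading term proportional to $q^{(r-R)/R}=q^{t-1/2}$, while the accumulated constants collapse to the single floor term $\frac R2\lfloor q^{(r-2R)/R-0.5}\rfloor$ (respectively $R\lfloor\cdots\rfloor$ in \eqref{eq1_h=3}), the floor accounting for rounding in the subfield-based correcting configuration.

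The main obstacle is the seed together with the compatibility check: one must show that the specific small saturating sets used as blocks really do assemble into an $(R,\ell)$-object with the value of $\ell$ demanded by the lifting, so that the covering radius stays exactly $R$ and neither drops (spoiling the claimed parameters) nor grows (invalidating the bound). A secondary difficulty is tracking the additive constants through the $t-1$ iterations so that they combine into precisely the stated floor term rather than a looser $O\bigl(q^{t-2}\bigr)$ expression; this needs the geometric series of corrections to be dominated by its last term. Finally, the restriction $q'\le73$ prime in \eqref{eq1_h=3} is exactly where the sixth-power seed is only known to be good, so that hypothesis would enter through the availability of the block and not through the lifting machinery.
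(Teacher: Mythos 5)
Your overall architecture --- planar $1$-saturating sets over subfield towers as seeds (these are exactly Proposition \ref{prop6_1sat}(i)--(iii)), an $\frac{R}{2}$-fold direct sum of radius-$2$ blocks, and a $q^m$-concatenating lift --- is the right toolkit, and it is the same toolkit the paper uses for its analogous new bounds (Theorem \ref{th6_evenR_r=tR+R/2}). But you compose the tools in the opposite order from the paper and the cited sources, and the reversal creates a genuine gap. The paper first lifts the single planar $[n_q,n_q-3]_q2$ code to an $R=2$ family of all odd codimensions $r''=2t+1$ (Proposition \ref{prop6_induct}, whose only hypotheses are $n_q<q$ and $q+1\ge 2n_q$), and only then takes the $\frac{R}{2}$-fold direct sum (Lemma \ref{lem6_evenR}), which requires no covering-type hypothesis and adds radii and codimensions exactly. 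Your order --- direct sum first, then a radius-$R$ lift --- makes the whole proof hinge on showing that the direct-sum seed is an $(R,\ell)$-object with $\ell\ge R-1$, as demanded by Constructions QM$_1$/QM$_2$ (Theorems \ref{th4_QM1}, \ref{th4_QM2}). You flag this as the main obstacle but do not resolve it, and it is not routine: each planar block has $d=3$ and cannot be a $(2,2)$-object (a weight-$2$ codeword would force two proportional parity-check columns, i.e.\ coincident projective points), so cosets whose syndrome vanishes on some blocks have only short natural representations, and pushing every coset's weight into $\{R-1,R\}$ would require an analysis of the kind the paper carries out in Lemma \ref{lem5}, exploiting weight-$3$ and weight-$4$ words of the component codes. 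Without it, the $(R,\ell)$ parameter of your seed is uncontrolled and nothing downstream follows.

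The second gap is in the lifting step itself: the $q^m$-concatenating constructions cannot be ``iterated $t-1$ times'' with $m=1$. They need $q^m\ge n_0-1$, because the $n_0$ columns of the starting matrix must receive distinct labels $\beta_j\in\F_{q^m}\cup\{*\}$. For a seed of length roughly $\frac{3R}{2}\sqrt q$ a first $m=1$ step is admissible when $q$ is large with respect to $R$, but the resulting code has length about $\frac{3R}{2}q^{3/2}>q+1$, so no further $m=1$ step exists. The correct use --- and the one behind Proposition \ref{prop6_induct} --- is a one-shot application to the same short seed, with $m$ chosen separately for each target codimension; this is also why the correction appears as a single term $\frac{R}{2}\left\lfloor q^{(r-2R)/R-0.5}\right\rfloor$ rather than as the geometric series your iteration would accumulate (which is strictly larger, so your argument would not even recover the stated constants). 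Two details corroborate that the mechanism matters: the exclusions $t\ne4,6$ in \eqref{eq1_h=3} arise from admissibility conditions of the lifting at particular codimensions, not from the seed (whereas you correctly locate the $q'\le73$ restriction in the seed), and the coefficients $\frac{R}{2}$ versus $R$ on the floor terms come from scaling the $R=2$ family's correction term by $\frac{R}{2}$ under the direct sum, which the iteration picture cannot reproduce.
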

\begin{problem}\label{probl1}\looseness=-1
  Improve on the known bounds on the length function $\ell_q(r,R)$ collected in

  \textbf{(i)} Proposition \ref{prop1_R>=4} where $R\ge4$, $r=tR$, $t\ge2$;

   \textbf{(ii)} Proposition \ref{prop1_evenR} where $R\ge2$, $r=tR+\frac{R}{2}$, $t\ge1$.
\end{problem}

Effective methods to obtain upper bounds on $\ell_q(r,R)$ are connected with \emph{saturating sets in projective spaces}.
Let $\PG(N,q)$ be the $N$-dimensional projective space over the field $\F_q$; see \cite{Hirs,HirsSt-old,HirsStor-2001} for an introduction to the projective spaces  and \cite{EtzStorm2016,Giul2013Survey,HirsSt-old,Klein-Stor,LandSt} for connections  between coding theory and Galois geometries.

\begin{definition}
\label{def1_usual satur}

 A point set $S\subseteq\PG(N,q)$ is
$\rho$-\emph{saturating} if any of the following equivalent properties holds:

\textbf{(i)} For any point $A$ of\/ $\PG(N,q)\setminus S$
there exist $\rho+1$ points in $S$ generating a subspace of $\PG(N,q)$ containing
$A$, and $\rho$ is the smallest value with this property.

\textbf{(ii)} Every
point $A\in\PG(N,q)$ (in homogeneous coordinates) can be written as a linear combination of at most $\rho+1$ points of $S$, and $\rho$ is the smallest value with this property (cf. Definition~\ref{Def1_CoverRad}(ii)).
\end{definition}

\begin{definition}\label{def1_minim} A   $\rho$-saturating set in $\PG(N,q)$  is \emph{minimal} if it does not contain a smaller $\rho$-saturating   set in $\PG(N,q)$.
\end{definition}

Saturating sets are considered  in \cite{BDGMP-R2R3CC_2019,BrPlWi,Dav95,Handbook-coverings,Janwa,%
DFMP-IEEE-LO,DGMP_ACCT2008,DGMP-AMC,DMP-JCTA2003,DavOstEJC,DavOst-IEEE2001,EtzStorm2016,Giul2013Survey,Klein-Stor,LandSt,Ughi}. In the literature, saturating sets are also called ``saturated
sets'', ``spanning sets'', ``dense sets''.

Let $s_q(N,\rho)$ be \emph{the smallest size of a $\rho$-saturating set} in $\mathrm{PG}(N,q)$.

If a column of an $r\times n$ parity check matrix of an $[n,n-r]_qR$ code is treated as a point (in homogeneous coordinates) of $\PG(r-1,q)$ then this parity check matrix defines an $(R-1)$-saturating $n$-set in $\PG(r-1,q)$ \cite{BrPlWi,Dav95,DavOstEJC,Giul2013Survey,DGMP-AMC,EtzStorm2016,Klein-Stor,LandSt,Janwa}. There is a \emph{one-to-one correspondence between $[n,n-r]_qR$ codes and $(R-1)$-saturating $n$-sets in $\mathrm{PG}(r-1,q)$}. Therefore,
$\ell_q(r,R)=s_q(r-1,R-1).$ If the $[n,n-r]_qR$ code  is locally optimal then the corresponding $(R-1)$-saturating $n$-set is minimal.

The results of Proposition \ref{prop1_R>=4} are based on the so-called direct sum \cite[Sect.\,4.2]{DGMP-AMC} of codes with radius $R=2,3$ which use the following geometrical constructions:\\
$\bullet$ ``oval plus line'' \cite[p.\,104]{BrPlWi}, \cite[Th.\,3.1]{DavParis}, \cite[Th.\,5.1]{Dav95}; the construction gives a 1-saturating  $(2q+1)$-set in $\PG(3,q)$ corresponding to a $[2q+1,2q+1-4,3]_q2$ code with $r=4=2R$;\\
$\bullet$ ``two ovals plus line'' \cite[Sect.\,4]{DavOstEJC}; the construction gives a 2-saturating  $(3q+1)$-set in $\PG(5,q)$ that corresponds to a $[3q+1,3q+1-6,3]_q3$ code with $r=6=2R$.

\begin{problem}\label{probl2}
   For all $\rho\ge3$, obtain a construction of a $\rho$-saturating $((\rho+1)q+1)$-set in\linebreak
    $\PG(2\rho+1,q)$ that corresponds to an $[Rq+1,Rq+1-2R]_qR$ code with $R=\rho+1$; thereby prove that $s_q(2\rho+1,\rho)\le (\rho+1)q+1$ and $\ell_q(2R,R)\le Rq+1$.
\end{problem}

 Note that for $n < Rq+1$, no examples of  $[n,n-2R]_qR$ codes  seem to be known. Moreover, in \cite[Prop.\,4.2]{DGMP-AMC}, it is proved that $\ell_4(4,2)=s_4(3,1)=2\cdot4+1$.

\begin{problem}\label{probl3}
 \emph{\cite[Sects.\,4,\,5]{DGMP-AMC}} Determine whether  $\ell_q(2R,R)=Rq+1$.
\end{problem}

The results of Proposition \ref{prop1_evenR} are based on 1-saturating sets in the plane $\PG(2,q^2)$.
\begin{problem}\label{probl4}
In the projective plane $\PG(2,q)$ with $q$ square,  construct new $1$-saturating sets with sizes smaller than the known ones.
\end{problem}

The paper is organized as follows. In Sect. \ref{sec_main_res}, we summarize the main results of the paper. In Sect. \ref{sec_constr}, we propose a construction ``Line+Ovals'' for $\rho$-saturating sets in $\PG(2\rho+1,q)$ and codes of codimension $2R$. This solves Problem \ref{probl2}. In Sect.~\ref{sec4_qm_concat}, we give two constructions for code codimension lifting.  In Sect. \ref{sec5_inf_fam},  we
use the codes of Sect. \ref{sec_constr} as starting ones for the constructions of Sect. \ref{sec4_qm_concat} and obtain new infinite code families with fixed radius $R\ge4$ and codimension $tR$, $t\ge2$. This solves Problem~\ref{probl1}(i) for the most part. In Sect. \ref{sec_1sat_plane}, using the recent known results on double blocking sets, we obtain new 1-saturating sets in $\PG(2,q^2)$ that solves in part Problem~\ref{probl4}. Then starting from these sets, we obtain new
infinite code families with fixed even radii $R\ge2$ and codimension $tR+\frac{R}{2}$, $t\ge1$. This solves in part Problem~\ref{probl1}(ii).

\section{The main results}\label{sec_main_res}
The main results of this paper are as follows:

$\bullet$ Problem \ref{probl2} is solved, see Sect. \ref{sec_constr} where minimal $\rho$-saturating $((\rho+1)q+1)$-sets in $\PG(2\rho+1,q)$ are constructed. The minimality of these sets gives credence that  Problem \ref{probl3}  can be solved.

$\bullet$ Problem \ref{probl1}(i) is solved for the most part, see Sects. \ref{sec4_qm_concat} and \ref{sec5_inf_fam}. New constructive upper bounds based on Theorems \ref{th3_main_geom}, \ref{th3_main_codes}, \ref{th5_odd}, \ref{th5_even} are collected in Theorem \ref{th2_main}.

\begin{theorem}\label{th2_main}
\looseness=-1 For the length function $\ell_q(r,R)$ and for the smallest size $s_q(r-1,R-1)$ of an $(R-1)$-saturating set
in  $\PG(r-1,q)$ the following constructive bounds  hold:
  \begin{align*}
&\ell_q(r,R)=s_q(r-1,R-1)\le Rq^{(r-R)/R}+q^{(r-2R)/R}+\Delta_q(r,R),~r=tR,\displaybreak[3]\\
&\text{where for }m_1=\lceil\log_q (R+1)\rceil+1\text{ we have}\displaybreak[3]\\
    &\textbf{\emph{(i)}} ~~~\Delta_q(r,R)=0\text{ if }~t=2,~q=4\text{ and }q\ge7,~R\ge4;\displaybreak[3]\\
    &\textbf{\emph{(ii)}} ~~\Delta_q(r,R)=0\text{ if }~t=2,~q=5,~R=4,5;\displaybreak[3]\\
    &\textbf{\emph{(iii)}} ~\Delta_q(r,R)=0\text{ if }~t\ge\lceil\log_q R\rceil+3,~q\ge7 \text{ odd},~ R\ge4;\displaybreak[3]\\
    &\textbf{\emph{(iv)}} ~\,\Delta_q(r,R)=\sum_{j=2}^{t}q^{(r-jR)/R}~\text{ if }~m_1+2<t< 3m_1+2,~q\ge8\text{ even},~R\ge4;\displaybreak[3]\\
    &\textbf{\emph{(v)}} ~~\Delta_q(r,R)=\sum_{j=2}^{m_1+2}q^{(r-jR)/R}~\text{ if }~t=m_1+2 \text{ and }t\ge3m_1+2,~q\ge8\text{ even},~R\ge4.
  \end{align*}
\end{theorem}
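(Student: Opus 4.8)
The plan is to derive all five bounds from a single base construction followed by an iterated codimension-lifting step, so that Theorem~\ref{th2_main} becomes a bookkeeping exercise once the two ingredients from Sections~\ref{sec_constr} and~\ref{sec4_qm_concat} are in place. First I would record the base case $t=2$, i.e. $r=2R$. Here $(r-R)/R=1$ and $(r-2R)/R=0$, so the claimed bound reads $\ell_q(2R,R)\le Rq+1$ with $\Delta_q(2R,R)=0$; this is precisely the output of the ``Line+Ovals'' construction of Theorems~\ref{th3_main_geom} and~\ref{th3_main_codes}, which yields the $[Rq+1,Rq+1-2R,3]_qR$ codes. Cases~\textbf{(i)} and~\textbf{(ii)} are therefore nothing but these base codes restated in the uniform notation of the theorem, and the only thing to check there is that the listed $(q,R)$ pairs are exactly those for which the base construction is valid.

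For $t>2$ I would feed the base codes into the codimension-lifting constructions of Section~\ref{sec4_qm_concat}. Each lift sends an $[n_j,n_j-jR]_qR$ code to an $[n_{j+1},n_{j+1}-(j+1)R]_qR$ code with $n_{j+1}=q\,n_j+c_{j+1}$, where $c_{j+1}$ is a correction term depending on the parity of $q$ and on the size of $n_j$ relative to $R$. Unrolling the recursion from $n_2=Rq+1$ gives
\begin{equation*}
n_t=q^{\,t-2}(Rq+1)+\sum_{j=3}^{t}q^{\,t-j}c_j=Rq^{\,t-1}+q^{\,t-2}+\sum_{j=3}^{t}q^{\,t-j}c_j,
\end{equation*}
which already exhibits the leading term $Rq^{(r-R)/R}$ and the second term $q^{(r-2R)/R}$, with $\Delta_q(r,R)=\sum_{j}q^{(r-jR)/R}c_j$. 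Everything thus reduces to understanding when $c_j=0$ and when $c_j=1$; the renormalization of the summation index to start at $j=2$ rather than $j=3$ is the harmless absorption of the base $+1$ into the sum.

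The split into~\textbf{(iii)}--\textbf{(v)} then mirrors the two lifting constructions of Section~\ref{sec4_qm_concat} together with the usual conic-versus-hyperoval dichotomy for odd and even $q$. For odd $q$ one construction achieves $c_j=0$ at every step once enough steps have accrued, quantitatively $t\ge\lceil\log_q R\rceil+3$; this gives $\Delta_q(r,R)=0$ and case~\textbf{(iii)}, the threshold being exactly the number of lifts needed for the running length $n_j\sim Rq^{\,j-1}$ to dominate $R$. For even $q$ the two constructions produce different correction profiles: one charges a correction at every step, giving the full sum $\sum_{j=2}^{t}$ of case~\textbf{(iv)} in the intermediate window $m_1+2<t<3m_1+2$, while the other caps the corrections, giving the truncated sum $\sum_{j=2}^{m_1+2}$ of case~\textbf{(v)} at the endpoint $t=m_1+2$ and for all large $t\ge 3m_1+2$. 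The constant $m_1=\lceil\log_q(R+1)\rceil+1$ counts the steps needed for the lift multiplicity to surpass $R+1$, after which the offending correction is no longer incurred; matching the applicability ranges of the two constructions against $m_1+2$ and $3m_1+2$ is what selects the better bound in each regime.

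The main obstacle, and the part I would spend the most care on, is the precise accounting of the corrections $c_j$ for even $q$ and the resulting threshold constants $m_1$, $3m_1+2$, and $\lceil\log_q R\rceil+3$; the exact index at which $c_j$ drops to $0$ is delicate and is where off-by-one errors would creep in. This accounting in turn rests on verifying that the base codes of Section~\ref{sec_constr} are genuine surface-covering $[n,n-r]_qR,R$ codes, equivalently $(R,\ell)$-objects with the capsule structure of Definition~\ref{def1_R,l code}, since the lifting constructions preserve covering radius $R$ only when applied to codes with these capsule properties; the combinatorial analysis of spherical capsules advertised in the abstract is exactly what guarantees $c_j\in\{0,1\}$ and pins down when it vanishes. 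Granting Theorems~\ref{th5_odd} and~\ref{th5_even}, which package these lifting steps for odd and even $q$ respectively, the remaining work is purely the substitution $r=tR$ into their length formulas and the verification that the five regimes partition the stated ranges of $(t,q,R)$.
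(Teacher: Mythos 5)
Your top-level skeleton matches the paper's: cases (i)--(ii) are the ``Line+Ovals'' codes of Theorems \ref{th3_main_geom}, \ref{th3_main_codes} and Proposition \ref{prop3_q5}, the larger-$t$ cases come from the codimension-lifting constructions of Sect.~\ref{sec4_qm_concat}, and the odd/even split is governed by the capsule parameter of the base code (Lemma \ref{lem5}: $\ell=R$ for odd $q$, $\ell=R-1$ for even $q$). However, your central mechanism --- an iterated lift taking an $[n_j,n_j-jR]_qR$ code to an $[n_{j+1},n_{j+1}-(j+1)R]_qR$ code with $n_{j+1}=qn_j+c_{j+1}$, unrolled step by step from $n_2=Rq+1$ --- is not realizable by the constructions this theorem rests on, and no one-codimension-step lift of this kind exists in the paper. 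Constructions QM$_1$ and QM$_2$ (Theorems \ref{th4_QM1}, \ref{th4_QM2}) raise the codimension from $r_0$ to $r_0+Rm$ in a \emph{single} application, and they require $q^m\ge n_0-1$ (resp.\ $q^m\ge n_0$), because each of the $n_0$ columns of the starting parity check matrix must receive a distinct element $\beta_j\in\F_{q^m}\cup\{\ast\}$. A step of size $R$ in codimension (i.e.\ $m=1$) starting from $n_0=Rq+1$ would need $q\ge Rq$, which is impossible for $R\ge2$. This is not a technicality: it is exactly why the theorem covers only $t=2$ and $t\ge\lceil\log_q R\rceil+3$ for odd $q$, with nothing in between, and only the stated windows for even $q$. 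Your recursion, were it valid, would produce codes for \emph{every} $t\ge2$ and could not reproduce these gaps; the thresholds you interpret as ``the number of lifts needed for $n_j$ to dominate $R$'' are in fact lower bounds on the single jump $m$ (with $t=m+2$) forced by $q^m\ge n_0-1$.

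The even-$q$ accounting is also structurally different from your per-step corrections $c_j\in\{0,1\}$. Since for even $q$ the base code is only an $(R,R-1)$-object, one is forced to use QM$_2$, whose parity check matrix carries an extra Hamming-code block $\mathbf{C}$ of $\theta_{m,q}=\frac{q^{m+1}-1}{q-1}$ columns; this single block contributes the whole geometric series $\sum_{j}q^{(r-jR)/R}$ of case (iv) in one application. Case (v) is then obtained not by ``capping the corrections'' but by a two-stage composition: first QM$_2$ with the minimal admissible jump $m_1$, whose output is now a surface-covering $(R,R)$-object, and then QM$_1$ applied to that output, which multiplies the length by $q^{m_2}$ while adding no new correction columns, so the sum stays truncated at $j=m_1+2$; the condition $t\ge3m_1+2$ is the admissibility constraint $q^{m_2}\ge n_1$ of this second stage. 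So the proof is a choice among one application of QM$_1$ (odd $q$, case (iii)), one application of QM$_2$ (even $q$, case (iv)), or QM$_2$ followed by QM$_1$ (even $q$, case (v)), each with its own threshold on the jump size --- not a uniform recursion with a parity-dependent correction at each step.
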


The new bounds of Theorem \ref{th2_main} are better than the known ones of
 Proposition \ref{prop1_R>=4} where the coefficient for $q^{(r-2R)/R}$ is $\left\lceil \frac{R}{3}\right\rceil$ whereas in Theorem~\ref{th2_main} it is equal to 1 or 2.

$\bullet$ Problem \ref{probl4} is solved in part, see Sect. \ref{sec_1sat_plane}. We use the following notation:
\begin{align}\label{eq2_phi}
&\phi(q) \text{ is the order of the largest proper subfield of }\F_{q};\\
& f_q(r,R)=\left\{
 \begin{array}{ccc}
   0 & \text{ if }&r\ne\frac{9R}{2},\frac{13R}{2}\medskip \\
  q^{(r-3R)/R-0.5}+q^{(r-4R)/R-0.5} & \text{ if }& r=\frac{9R}{2},\frac{13R}{2}
 \end{array}
 \right..\label{eq2_fqrR}
\end{align}
\looseness=-1 By Proposition \ref{prop6_1sat}(v),(vi),   in $\PG(2,q)$, $q=p^{2h}$, $h\ge2$, there are  $1$-saturating $n$-sets with
\begin{align*}
n=2\sqrt{q}+2\frac{\sqrt{q}-1}{\phi(\sqrt{q}\,)-1},~p\ge3\text{ prime};~~n=2\sqrt{q}+2\frac{\sqrt{q}}{p}+2,~p\ge7\text{ prime}.
 \end{align*}
These new 1-saturating sets have smaller sizes than the known ones, see Remark \ref{rem6_improve}.

$\bullet$ Problem \ref{probl1}(ii) is solved in part. New bounds based on Theorem \ref{th6_evenR_r=tR+R/2} are as follows.
\begin{theorem}\label{th2_evenR_r=tR+R/2}
Let $R\ge2$ be even. Let $p$ be prime, $q=p^{2\eta},~\eta\ge2$, $r=tR+\frac{R}{2}$, $t\ge1$.\\  The following constructive upper bounds on the length function hold:
\begin{align*}
&\textbf{\emph{(i)}} ~\ell_q(r,R)\le R\left(1+\frac{\sqrt{q}-1}{\sqrt{q}(\phi(\sqrt{q}\,)-1)}\right)q^\frac{r-R}{R}+R\left\lfloor  q^{(r-2R)/R-0.5}\right\rfloor+\frac{R}{2}f_q(r,R),\,p\ge3;\displaybreak[3]\\
&\textbf{\emph{(ii)}} ~\ell_q(r,R)\le R\left(1+\frac{1}{p}+\frac{1}{\sqrt{q}}\right)q^{(r-R)/R}+R\left\lfloor q^{(r-2R)/R-0.5}\right\rfloor+\frac{R}{2}f_q(r,R),~p\ge7.
\end{align*}
  \end{theorem}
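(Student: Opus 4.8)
The plan is to assemble both bounds from three ingredients developed earlier in the paper: the small $1$-saturating sets in $\PG(2,q)$ of Proposition~\ref{prop6_1sat}(v),(vi), the direct-sum passage from covering radius $2$ to an arbitrary even radius $R$, and the codimension-lifting constructions of Section~\ref{sec4_qm_concat} (so that the statement becomes the specialization of Theorem~\ref{th6_evenR_r=tR+R/2} to these concrete starting sets). Throughout we use that $q=p^{2\eta}$ with $\eta\ge2$ forces $\sqrt{q}=p^{\eta}$ to be a non-prime prime power, so that $\phi(\sqrt{q}\,)>1$ is available and the hypothesis $h\ge2$ of Proposition~\ref{prop6_1sat} is met with $h=\eta$.

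First I would settle the base case $t=1$, i.e.\ $r=\tfrac{3R}{2}$. By Proposition~\ref{prop6_1sat}(v),(vi) there is a $1$-saturating $n_0$-set in $\PG(2,q)$, equivalently an $[n_0,n_0-3]_q2$ code, with
\[
n_0=2\sqrt{q}+2\frac{\sqrt{q}-1}{\phi(\sqrt{q}\,)-1}\ \ (p\ge3),\qquad n_0=2\sqrt{q}+2\frac{\sqrt{q}}{p}+2\ \ (p\ge7).
\]
Taking the direct sum of $\tfrac{R}{2}$ mutually independent copies of this code (equivalently, embedding $\tfrac{R}{2}$ copies of the set in complementary planes $\PG(2,q)$ inside $\PG(\tfrac{3R}{2}-1,q)$) produces an $(R-1)$-saturating set of size $\tfrac{R}{2}n_0$, hence an $[\tfrac{R}{2}n_0,\tfrac{R}{2}n_0-\tfrac{3R}{2}]_qR$ code. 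Since $q^{(r-R)/R}=\sqrt{q}$ at $r=\tfrac{3R}{2}$, the quantity $\tfrac{R}{2}n_0$ is exactly the leading term $R\bigl(1+\tfrac{\sqrt{q}-1}{\sqrt{q}(\phi(\sqrt{q}\,)-1)}\bigr)q^{(r-R)/R}$ of (i) and $R\bigl(1+\tfrac1p+\tfrac1{\sqrt{q}}\bigr)q^{(r-R)/R}$ of (ii); the two remaining terms vanish here because $(r-2R)/R-0.5=-1$ gives $\lfloor q^{-1}\rfloor=0$, and $r=\tfrac{3R}{2}\ne\tfrac{9R}{2},\tfrac{13R}{2}$ gives $f_q(r,R)=0$. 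Thus the base case holds and already fixes the constant multiplying $q^{(r-R)/R}$.

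Next I would pass from $t=1$ to arbitrary $t$ by iterating the codimension-lifting constructions of Section~\ref{sec4_qm_concat}, each application raising the codimension by $R$ and multiplying the length by essentially $q$. For this the starting radius-$R$ code must be an admissible $(R,\ell)$-object in the sense of Definition~\ref{def1_R,l code}, so the key preliminary step is to verify that the direct-sum building block inherits the required spherical-capsule structure from its radius-$2$ factors. After $t-1$ lifting steps the codimension is $\tfrac{3R}{2}+(t-1)R=tR+\tfrac{R}{2}=r$, the leading length term is scaled by $q^{t-1}$ and so becomes $R(\cdots)q^{(r-R)/R}$ with the same parenthetical constant as in the base case, while the accumulated overhead of the liftings is absorbed into the secondary term $R\lfloor q^{(r-2R)/R-0.5}\rfloor$, consistent with $(r-2R)/R-0.5=t-2$.

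The last and most delicate point is the anomalous term $\tfrac{R}{2}f_q(r,R)$, which is nonzero precisely for $r=\tfrac{9R}{2}$ and $r=\tfrac{13R}{2}$, that is $t=4$ and $t=6$. These are exactly the codimensions at which the auxiliary lower-codimension radius-$R$ code that a lifting step must splice in is not directly furnished by the same family — the same obstruction that forces $t\ne4,6$ in Proposition~\ref{prop1_evenR} — and one repairs it by substituting a slightly longer fallback code of codimension $3R$ or $4R$, whose length contributes the two summands $q^{(r-3R)/R-0.5}$ and $q^{(r-4R)/R-0.5}$ of $f_q(r,R)$. Checking that this substitution is legitimate and costs exactly $\tfrac{R}{2}f_q(r,R)$, together with confirming that the direct-sum base code really is an admissible $(R,\ell)$-object, is the main obstacle; the rest is the routine length bookkeeping across the $t-1$ iterations. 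Collecting the three contributions yields (i) for $p\ge3$ and (ii) for $p\ge7$, the prime lower bounds on $p$ being inherited directly from the two cases of Proposition~\ref{prop6_1sat}.
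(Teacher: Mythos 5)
Your base case $t=1$ is exactly right and coincides with the paper's: taking $\tfrac{R}{2}$ copies of the $[n_q,n_q-3]_q2$ codes coming from Proposition~\ref{prop6_1sat}(v),(vi) yields precisely the leading coefficients of (i) and (ii). But your route for $t\ge2$ --- direct sum first, then the $q^m$-concatenating constructions of Section~\ref{sec4_qm_concat} applied at radius $R$ --- reverses the paper's order of operations, and the reversal is fatal. The paper proves Theorem~\ref{th6_evenR_r=tR+R/2} by lifting \emph{at radius $2$ first}: the planar $[n_q,n_q-3]_q2$ code is fed into Proposition~\ref{prop6_induct}, a radius-$2$ lift whose only hypotheses are $n_q<q$ and $q+1\ge2n_q$ (conditions on $q$ alone, satisfied since $n_q\approx2\sqrt q$), giving $[n'',n''-(2t+1)]_q2$ codes for \emph{every} $t\ge1$ with $n''=n_qq^{t-1}+2\lfloor q^{t-2}\rfloor+f_q(2t+1,2)$ (Theorem~\ref{th6_codesR=2}); only then are $\tfrac R2$ copies direct-summed (Lemma~\ref{lem6_evenR}), which scales length and codimension by $\tfrac R2$ and needs no further hypotheses.

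The concrete gaps in your order are these. First, Constructions QM$_1$ and QM$_2$ require the starting radius-$R$ code to be an $(R,R)$- or $(R,R-1)$-object. For $R=2$ your starting code is the planar code itself; its parity-check columns are distinct projective points, so $d\ge3$ and the zero column of $\F_q^{\,3}$ cannot be written as a nonzero-coefficient combination of at most two distinct columns --- the code is not a $(2,1)$-object, neither QM construction applies, and your scheme cannot even start. For $R\ge4$ the analogous property of the direct-sum code is exactly the hard combinatorial content (compare Lemma~\ref{lem5}, which needs the special structure of Construction S); you flag it as a step to verify but never prove it, and nothing in the paper supplies it for these codes. Second, the lift parameter must satisfy $q^m\ge n_0-1$ with $n_0\approx R\sqrt q$ after the direct sum: your iteration in steps of codimension $R$ (i.e.\ $m=1$ each time) is impossible beyond the first step, since the length then already exceeds $q+1$; and even a single lift to $t=2$ forces roughly $\sqrt q\ge R$, a coupling of $q$, $R$, $t$ that the theorem does not contain. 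The paper avoids this entirely by lifting while the code still has length $\approx2\sqrt q<q$. Third, the bookkeeping fails: QM$_2$ adds $\theta_{m,q}=\frac{q^{m+1}-1}{q-1}\approx q^{t-1}$ to the length, which exceeds the budgeted second term $R\lfloor q^{(r-2R)/R-0.5}\rfloor=Rq^{t-2}$ whenever $q>R$, so the overhead is not ``absorbed'' as claimed; and the term $\tfrac R2f_q(r,R)$ in fact originates inside the radius-$2$ lift of Proposition~\ref{prop6_induct} at codimensions $9$ and $13$ (then gets multiplied by $\tfrac R2$ in the direct sum), not from patching radius-$R$ lifting steps as you suggest.
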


If $\sqrt{q}=p^\eta$ with $\eta\ge3$ odd,  the new bounds of Theorem \ref{th2_evenR_r=tR+R/2} are better than the known ones of Proposition \ref{prop1_evenR}. If e.g. $q=p^6$, $\eta=3$, then the bound of Theorem~\ref{th2_evenR_r=tR+R/2}(ii) is by $Rq^{(r-R)/R-1/3}$ smaller than
the known one of \eqref{eq1_h=3}. Also, the new bound holds for all $p\ge7$ whereas in  \eqref{eq1_h=3} $p\le73$. Moreover, if $\eta\ge5$ odd, the known bounds  \eqref{eq1_h=1} have the main term  $\frac{3}{2}Rq^{(r-R)/R}$ whereas for the new bounds it is   $Rq^{(r-R)/R}$.

\section{Construction ``Line+Ovals'' for $\rho$-saturating sets in\\ $\PG(2\rho+1,q)$ and codes of codimension $2R$}\label{sec_constr}
\looseness=-1\textbf{Notation.} Throughout the paper we denote by $x_i$, $i=0,1,\ldots,N$, homogeneous coordinates of points of $PG(N,q)$. In the other words, a point $(x_0x_1\ldots x_N)\in\PG(N,q)$. The leftmost nonzero  coordinate is equal to~1. In general, by default, $x_i\in\F_q$. If $x_i\in\F_q^*$, we denote it as~$\x_i$. If $(x_i\ldots x_{i+m})\ne(0\ldots0)$, we denote it as $\overline{x_i\ldots x_{i+m}}$. Also, we  write explicit values 0,1 for some coordinates or denote coordinates by the letters $a,a_j$ that are elements of $\F_q$.

\subsection{The construction}
Let $\F_q=\{a_1=0,a_2,\ldots,a_q\}$ be the Galois field of order $q$. Let $\F_q^*=\F_q\setminus\{0\}=\{a_2,\ldots,a_q\}$.
Denote $\Sigma_\rho=\PG(2\rho+1,q)$.
Let $\Sigma_u$ be the $(2u+1)$-dimensional projective subspace of $\Sigma_\rho$ such that
\begin{align*}
  \Sigma_u=\{(\underbrace{x_0x_1\ldots x_{2u+1}}_{2u+2}\underbrace{0\ldots0}_{2\rho-2u}):x_i\in\F_q\}\subseteq\Sigma_\rho,
~u=0,1,\ldots,\rho.
\end{align*}
In $\Sigma_u$, let $\pi_u$ be the plane such that
\begin{align*}
 \pi_u=\{(\underbrace{0\ldots0}_{2u-1}x_{2u-1}x_{2u}x_{2u+1}\underbrace{0\ldots0}_{2\rho-2u}):x_i\in\F_q\}\subset\Sigma_u,\,u=1,2,\ldots,\rho.
\end{align*}
In $\pi_u$, let $A_u^0$ and $A_u^\infty$ be the points of the form
\begin{align*}
&  A_u^0=(\underbrace{0\ldots0}_{2u-1}100\underbrace{0\ldots0}_{2\rho-2u})\in\pi_u,~~ A_u^\infty=(\underbrace{0\ldots0}_{2u-1}001\underbrace{0\ldots0}_{2\rho-2u})\in\pi_u,~u=1,2,\ldots,\rho.
\end{align*}
In $\pi_u$, let $C_u$ and $C_u^*$ be the conic and the truncated one, respectively, of the form
\begin{align*}
C_u=C_u^*\cup\{A^0_u,A^\infty_u\},~~C_u^*=\{(\underbrace{0\ldots0}_{2u-1}1aa^2\underbrace{0\ldots0}_{2\rho-2u}):a\in\F_q^*\},~u=1,2,\ldots,\rho.
\end{align*}
Let $T_u$ be the nucleus of $C_u$, if $q$ is even, or the intersection of the tangents to  $C_u$ in the points $A_u^0$ and $A_u^\infty$, if $q$ is odd, so that
$T_u=(\underbrace{0\ldots0}_{2u-1}010\underbrace{0\ldots0}_{2\rho-2u})\in\pi_u,~ u=1,2,\ldots,\rho.$

\noindent In $\Sigma_0$, let $A_0^0$ and $A_0^\infty$ be the points of the form $A_0^0=(10\underbrace{0\ldots0}_{2\rho}),~A_0^\infty=(01\underbrace{0\ldots0}_{2\rho})$.
Also,  let $L_0$ and $L^*_0$ be the line and the truncated one, respectively, such that
\begin{align*}
L_0=L^*_0\cup\{A_0^0,A_0^\infty\}\subset\Sigma_0,~ L^*_0=\{(1a\underbrace{0\ldots0}_{2\rho}):a\in\F_q^*\}\subset\Sigma_0.
\end{align*}

Note that by Definition \ref{def1_usual satur}, a 0-saturating set in $PG(N,q)$ is the whole space.

\noindent\textbf{Construction S. (``Line+Ovals'')}
Let $\rho\ge0$. Let $S_\rho=\{P_1,P_2,\ldots,P_{(\rho+1)q+1}\}$ be a point\linebreak
 $((\rho+1)q+1)$-subset of $\Sigma_\rho=\PG(2\rho+1,q)$. Let $P_j$ be the $j$-th point of $S_\rho$.
We construct $S_\rho$ as follows:
\begin{align}
\label{eq3_constr}
&S_0=\{A_0^0\}\cup L_0^*\cup\{A^\infty_0\}=\left\{P_1,P_2,\ldots,P_{q+1}\right\}=\Sigma_0=\PG(1,q);\displaybreak[3]\\
& S_{\rho}=\{A_0^0\}\cup L^*_0\cup\bigcup_{u=1}^{\rho}\left(C_u^*\cup\{T_u\}\right)\cup\{A^\infty_\rho\}
=\left\{P_1,P_2,\ldots,P_{(\rho+1)q+1}\right\}\subset\Sigma_\rho\text{ if }\rho\ge1.\displaybreak[3]\notag\\
& P_1=(10\underbrace{0\ldots0}_{2\rho})=A_0^0;~~P_j=(1a_j\underbrace{0\ldots0}_{2\rho}),~a_j\in\F_q^*,~~j=2,3,\ldots,q.\displaybreak[3]\\
& P_{uq+j-1}=(\underbrace{0\ldots0}_{2u-1}1a_ja_j^2\underbrace{0\ldots0}_{2\rho-2u}),~a_j\in\F_q^*,~~u=1,2,\ldots,\rho,~j=2,3,\ldots,q.\displaybreak[3]\\
\label{eq3_points1}
  & P_{(u+1)q}=(\underbrace{0\ldots0}_{2u-1}010\underbrace{0\ldots0}_{2\rho-2u})=T_{u},~u=1,2,\ldots,\rho;~~P_{(\rho+1)q+1}=A^\infty_\rho.
\end{align}
Also, the set $S_\rho$ can be represented in the matrix form $\widehat{\Hh}_\rho$, where every column is a point in homogeneous coordinates. We have
\begin{align}\label{eq3_points}
&S_\rho=\widehat{\Hh}_\rho\\
&=\left[
 \begin{array}{@{}c@{\,}||@{\,}c@{\,}c@{\,}c@{\,}||@{\,\,}c@{\,}c@{\,}c@{\,}|@{\,}c@{}||
 @{\,\,}c@{\,}c@{\,}c@{\,}|@{\,}c@{\,}||@{\,}c@{\,}||
 @{\,}c@{\,}c@{\,}c@{\,}|@{\,}c@{}||@{\,\,}c@{\,}c@{\,}c@{\,}|@{\,}c@{\,}||@{\,}c@{} }
1&   1&\ldots&     1&0    &\ldots&0    &0&0    &\ldots&0    &0&\ldots&0&\ldots&0&0&0&\ldots&0&0&0 \\
0& a_2&\ldots& a_q  &1    &\ldots&1    &0&0    &\ldots&0    &0&\ldots&0&\ldots&0&0&0&\ldots&0&0&0 \\
0&0&\ldots&     0&a_2  &\ldots&a_q  &1&0    &\ldots&0    &0&\ldots&0&\ldots&0&0&0&\ldots&0&0&0 \\
0&0&\ldots&     0&a_2^2&\ldots&a_q^2&0&1    &\ldots&1    &0&\ldots&0&\ldots&0&0&0&\ldots&0&0&0 \\
0&0&\ldots&     0&0    &\ldots&0    &0&a_2  &\ldots&a_q  &1&\ldots&0&\ldots&0&0&0&\ldots&0&0&0 \\
0&0&\ldots&     0&0    &\ldots&0    &0&a_2^2&\ldots&a_q^2&0&\ldots&0&\ldots&0&0&0&\ldots&0&0&0\\
    & &\ldots&      &     &\ldots&     & &     &\ldots&     & &\ldots& &\ldots& & & &\ldots& &\\
0&0&\ldots&     0&0    &\ldots&0    &0&0    &\ldots&0    &0&\ldots&1    &\ldots&1    &0&0    &\ldots&0    &0&0\\
   0&0&\ldots&     0&0    &\ldots&0    &0&0    &\ldots&0    &0&\ldots&a_2  &\ldots&a_q  &1&0    &\ldots&0    &0&0\\
   0&0&\ldots&     0&0    &\ldots&0    &0&0    &\ldots&0    &0&\ldots&a_2^2&\ldots&a_q^2&0&1    &\ldots&1    &0&0\\
   0&0&\ldots&     0&0    &\ldots&0    &0&0    &\ldots&0    &0&\ldots&0    &\ldots&0    &0&a_2  &\ldots&a_q  &1&0\\
   0&0&\ldots&     0&0    &\ldots&0    &0&0    &\ldots&0    &0&\ldots&0    &\ldots&0    &0&a_2^2&\ldots&a_q^2&0&1\\
   -&-&-&-&-&-&-&-&-&-&-&-&-&-&-&-&-&-&-&-&-&-\\
 A_0^0&   &L_0^*  &      &     &C_1^*&   &T_1&     &C_2^*      &&T_2&\ldots&  &C_{\rho-1}^*  &&T_{\rho-1}&&C_{\rho}^*&&T_{\rho}& A^{\infty}_{\rho}
 \end{array}
\right].\notag
\end{align}
\begin{remark}
The sets $S_1$ and $S_2$ of Construction S are, respectively, the $1$-saturating set in $\PG(3,q)$ of the construction ``oval plus line'' \cite[p.\,104]{BrPlWi}, \cite[Th.\,3.1]{DavParis}, \cite[Th.\,5.1]{Dav95} and the $2$-saturating set in $\PG(5,q)$ of the construction ``two ovals plus line'' \cite[Sect.\,4]{DavOstEJC}.
\end{remark}
\subsection{Saturation of Construction S}
We say that a point $A\in\PG(N,q)$ is \emph{$\rho$-covered} by a set $S\subseteq\PG(N,q)$ if $A$ is a linear combination of  less than or equal to
 $\rho+1$ points of $S$. A subset $G\subset\PG(N,q)$ is \emph{$\rho$-covered} by $S$ if all points of $G$ are $\rho$-covered by $S$.

\begin{definition}\label{def3_subst} Let $S$ be a $\rho$-saturating set in $\PG(N,q)$.
A point $A\in S$ is \emph{$\rho$-essential} if $S\setminus \{A\}$ is no longer a $\rho$-saturating set. A point $A\in S$ is \emph{$\rho$-essential} for a set $\widetilde{\M_\rho}(A)\subset\PG(N,q)$ if all points of  $\widetilde{\M_\rho}(A)$ are not $\rho$-covered by $S\setminus \{A\}$. We denote by $\M_\rho(A)$ a set such that $\widetilde{\M_\rho}(A)\subseteq\M_\rho(A)\subset\PG(N,q)$.
\end{definition}

The following proposition and lemma are obvious.

\begin{proposition}\label{prop3_S0}
  Let $q\ge3$. Let $\Sigma_0=\PG(1,q)$. Let the set $S_0=\{A_0^0\}\cup L_0^*\cup\{A^\infty_0\}\subset\Sigma_0$ be as in \eqref{eq3_constr}--\eqref{eq3_points}. Then it holds that

\emph{\textbf{(i)}} The $(q+1)$-set $S_0$ is a minimal $0$-saturating set in $\Sigma_0$.

\emph{\textbf{(ii)}} The point $A^\infty_0$ of  $S_0$ is $0$-essential for the set $\widetilde{\M_0}(A^\infty_0)$ such that
\begin{align}\label{eq3_M_0}
 \widetilde{\M_0}(A^\infty_0)
 =\M_0(A^\infty_0)=\{A^\infty_0\}=\{(01)\}.
\end{align}

\emph{\textbf{(iii)}} The $q$-set $S_0\setminus\{A^\infty_0\}$ is $1$-saturating in $\Sigma_0$.
\end{proposition}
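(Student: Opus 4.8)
The plan is to observe that the entire statement reduces to unraveling the three relevant definitions in the degenerate cases $\rho=0,1$ on the projective line, where $S_0$ happens to be the whole space. First I would record the elementary fact that $S_0=\{A_0^0\}\cup L_0^*\cup\{A_0^\infty\}$ consists of the $q+1$ points $(10)$, the points $(1a)$ with $a\in\F_q^*$, and $(01)$; that is, $S_0=\PG(1,q)$, exactly as stated in \eqref{eq3_constr}. Everything else follows by inspection.

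For \textbf{(i)}, I would invoke Definition~\ref{def1_usual satur} with $\rho=0$: a set is $0$-saturating precisely when every point of the space is a linear combination of at most one of its points, i.e.\ lies in the set. Hence the only $0$-saturating set in $\PG(1,q)$ is the whole space. Since $S_0=\PG(1,q)$, it is $0$-saturating; and because every $0$-saturating set must equal $\PG(1,q)$, no proper subset of $S_0$ can be $0$-saturating, so $S_0$ is minimal by Definition~\ref{def1_minim}.

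For \textbf{(ii)}, I would unpack the notion of $0$-covering from Definition~\ref{def3_subst}: a point is $0$-covered by a set iff it is a linear combination of at most one of its points, i.e.\ belongs to the set. Deleting $A_0^\infty$ leaves $S_0\setminus\{A_0^\infty\}=\{(1a):a\in\F_q\}$, which contains every point of $\PG(1,q)$ except $(01)$. Thus $A_0^\infty=(01)$ is the unique point not $0$-covered by $S_0\setminus\{A_0^\infty\}$, giving $\widetilde{\M_0}(A_0^\infty)=\{A_0^\infty\}$; taking $\M_0(A_0^\infty)=\{(01)\}$ respects the required inclusion $\widetilde{\M_0}(A_0^\infty)\subseteq\M_0(A_0^\infty)$, and $A_0^\infty$ is $0$-essential since its removal strictly shrinks the $0$-covered set and hence destroys $0$-saturation.

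For \textbf{(iii)}, the set $S_0\setminus\{A_0^\infty\}$ omits only the point $A_0^\infty$. Since $q\ge3$ it contains at least two distinct points, and on a projective line any two distinct points span all of $\PG(1,q)$, which contains $A_0^\infty$; so every point outside the set (namely $A_0^\infty$ alone) is generated by two of its points, making the set $1$-saturating. As the set is not all of $\PG(1,q)$ it fails to be $0$-saturating, so $1$ is the smallest admissible value. The only mild subtlety throughout is the correct reading of the degenerate $\rho=0$ cases of the saturation and covering definitions; once these are fixed there is no genuine obstacle and no computation is required, which is why the proposition is flagged as obvious.
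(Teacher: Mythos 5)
Your proposal is correct: all three parts follow by unpacking Definitions~\ref{def1_usual satur}, \ref{def1_minim}, and \ref{def3_subst} in the degenerate case $\rho=0$, where $S_0=\PG(1,q)$ and $0$-covered simply means membership in the set. The paper gives no proof at all (it declares the proposition obvious, having already noted that a $0$-saturating set is the whole space), so your argument is precisely the intended definitional verification, just written out in full.
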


\begin{lemma}\label{lem3_Ainf=A0__pi_u}
 Let $q\ge4$, $\rho\ge2$. Then the plane $\pi_u$, $u=1,\ldots,\rho$, is $2$-covered by $C_u^*$.
 Also, the point $A^\infty_u= A^0_{u+1}$, $u=1,\ldots,\rho-1$, is $2$-covered by $C_u^*$ as well as by $C_{u+1}^*$.
\end{lemma}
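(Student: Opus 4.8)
The plan is to reduce both assertions to the single fact that any three points of $C_u^*$ form a basis of the three-dimensional coordinate space underlying the plane $\pi_u$.

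First I would fix $u$ and restrict attention to the three coordinates $x_{2u-1},x_{2u},x_{2u+1}$ that define $\pi_u$, so that $\pi_u\cong\PG(2,q)$ and the points of $C_u^*$ read $(1,a,a^2)$ with $a\in\F_q^*$. Since $q\ge4$, the set $C_u^*$ contains $q-1\ge3$ points, so I can choose three distinct values $a_1,a_2,a_3\in\F_q^*$. The $3\times3$ matrix whose columns are the corresponding vectors $(1,a_i,a_i^2)$ is a Vandermonde matrix with determinant $\prod_{i<j}(a_j-a_i)\ne0$; hence these three points of $C_u^*$ are linearly independent and form a basis of $\F_q^{\,3}$. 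Consequently every point of $\pi_u$, in homogeneous coordinates, is a linear combination of these three points, and therefore a linear combination of at most three points of $C_u^*$. By the definition of $\rho$-covering this is exactly the statement that $\pi_u$ is $2$-covered by $C_u^*$, which proves the first claim for every $u=1,\ldots,\rho$.

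For the second claim I would first verify the identification $A_u^\infty=A_{u+1}^0$ directly from the coordinate descriptions: $A_u^\infty$ carries its unique nonzero entry $1$ in coordinate $x_{2u+1}$, and so does $A_{u+1}^0$, so the two points coincide. Next I observe that this common point lies simultaneously in both planes: its only nonzero coordinate $x_{2u+1}$ is among the coordinates $x_{2u-1},x_{2u},x_{2u+1}$ defining $\pi_u$ and among the coordinates $x_{2u+1},x_{2u+2},x_{2u+3}$ defining $\pi_{u+1}$. Since $1\le u\le\rho-1$ (using $\rho\ge2$), both indices $u$ and $u+1$ lie in the range $1,\ldots,\rho$, so the first claim applies to each of $\pi_u$ and $\pi_{u+1}$. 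Applying it to $\pi_u$ shows $A_u^\infty$ is $2$-covered by $C_u^*$, and applying it to $\pi_{u+1}$ shows $A_{u+1}^0$ is $2$-covered by $C_{u+1}^*$.

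There is essentially no serious obstacle here, matching the paper's remark that the lemma is ``obvious''; the argument is a combination of a nonvanishing Vandermonde determinant and a bookkeeping of coordinate positions. The only points requiring care are \textbf{(a)} that the two removed conic points $A_u^0$ and $A_u^\infty$ are genuinely excluded from $C_u^*$, so one must use only the $q-1$ surviving points, which is harmless since three of them already suffice; and \textbf{(b)} confirming that the shared junction point indeed sits inside both consecutive planes, which is what makes the redundant two-sided coverage possible and is the feature exploited later when analyzing minimality.
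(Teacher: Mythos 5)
Your proof is correct. The paper gives no argument for this lemma at all---it is introduced with ``The following proposition and lemma are obvious''---and your reasoning (any three points $(1,a_i,a_i^2)$ of the truncated conic are linearly independent by the nonvanishing Vandermonde determinant, hence span $\pi_u$, together with the coordinate check that $A_u^\infty=A_{u+1}^0$ lies in both $\pi_u$ and $\pi_{u+1}$, both indices being in range since $1\le u\le\rho-1$) is precisely the standard justification the authors leave implicit, including the role of the hypothesis $q\ge4$ in guaranteeing $|C_u^*|=q-1\ge3$.
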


\begin{lemma}\label{lem3_piu-A0,Ainf_pirho-A0}
 Let $q=4$ or $q\ge7$. Then all points of $\pi_u\setminus\{A^0_u,A^\infty_u\}$ are $1$-covered by $C_u^*\cup\{T_u\}$, $u=1,\ldots,\rho$.
Also, all points of $\pi_\rho\setminus\{A^0_\rho\}$ are $1$-covered by $C_\rho^*\cup \{T_\rho ,A^\infty_\rho\}$.
\end{lemma}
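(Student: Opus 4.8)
The plan is to work entirely inside the single plane $\pi_u\cong\PG(2,q)$, using the three nonzero coordinate slots $(x_{2u-1},x_{2u},x_{2u+1})$, which I abbreviate as $(x,y,z)$. In these coordinates the conic is $C_u=\{(x,y,z):y^2=xz\}$, the truncated conic is $C_u^*=\{(1,a,a^2):a\in\F_q^*\}$, the removed points are $A_u^0=(1,0,0)$ and $A_u^\infty=(0,0,1)$, and $T_u=(0,1,0)$. A quick computation shows that for odd $q$ the tangents to $C_u$ at $A_u^0$ and $A_u^\infty$ are the lines $z=0$ and $x=0$, whose intersection is exactly $T_u$. Since a point is $1$-covered by a set $S$ precisely when it lies in $S$ or on a secant line of $S$, the task reduces to checking that every point of $\pi_u\setminus\{A_u^0,A_u^\infty\}$ lies on $C_u^*\cup\{T_u\}$ or on a line joining two of its points.

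First I would dispose of the infinite points $(0,1,z)$ with $z\neq0$. The chord of $C_u$ through distinct points $(1,a,a^2)$ and $(1,b,b^2)$ meets the line $x=0$ at $(0,1,a+b)$, so such a point is covered as soon as $a+b=z$ is solvable with $a,b\in\F_q^*$, $a\neq b$; excluding the at most three forbidden values $a\in\{0,z,z/2\}$ leaves a valid choice whenever $q\ge4$. For the affine points $(1,y,z)$ (all to be covered except $A_u^0$) I would use two families of lines: the line through $T_u$ and a conic point $(1,a,a^2)$ sweeps out every $(1,t,a^2)$ with $t\in\F_q$, hence covers all $(1,y,z)$ whose $z$-coordinate is a nonzero square; and the chord through $(1,a,a^2),(1,b,b^2)$ is the line $z=(a+b)y-ab$, which covers $(1,y,z)$ whenever there exist distinct $a,b\in\F_q^*$ with $a+b=\sigma$ and $ab=\sigma y-z$ for a suitable $\sigma$.

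For $q$ even every nonzero element is a square, so the lines through $T_u$ already cover all $(1,y,z)$ with $z\neq0$; the remaining points $(1,y,0)$ with $y\neq0$ are handled by chords after the substitution $a\mapsto1/a$, $b\mapsto1/b$, which turns the chord condition into the solvable sum equation $1/a+1/b=1/y$. This finishes $q=4$ and all even $q$. For $q$ odd only the nonsquare and zero values of $z$ remain, and these I would treat with chords. Writing $\sigma=a+b$, $p=ab=\sigma y-z$, the endpoints $a,b$ are the roots of $w^2-\sigma w+p$, and after setting $w=\sigma-2y$ the governing discriminant becomes $D(w)=w^2+4(z-y^2)$; covering $(1,y,z)$ then amounts to finding $w$ making $D(w)$ a nonzero square while avoiding the single value of $w$ that forces $ab=0$. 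The key counting fact is that for any constant $c\neq0$ the quantity $w^2+c$ is a nonzero square for at least $(q-3)/2$ choices of $w$, seen by parametrising $v^2-w^2=c$ as the hyperbola $XY=c$. This is the step I expect to be the main obstacle, and it is exactly where the hypothesis is forced: $(q-3)/2\ge2$ requires $q\ge7$ so that one admissible $w$ survives after discarding the forbidden one, whereas $q=5$ genuinely fails and is rightly excluded.

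Finally, the second assertion, covering $\pi_\rho\setminus\{A_\rho^0\}$ by the enlarged set $C_\rho^*\cup\{T_\rho,A_\rho^\infty\}$, follows from the same analysis: the extra point $A_\rho^\infty$ is trivially $1$-covered, and adjoining it only produces additional secants, so every point covered above stays covered and no new obstruction appears. I would simply re-run the two-family argument, the infinite points $(0,1,z)$ and the affine points being covered exactly as before.
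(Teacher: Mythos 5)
Your proof is correct, but it takes a genuinely different route from the paper's. The paper disposes of the lemma in three lines by quoting classical incidence counts: a point of the plane not on a hyperoval ($q$ even) lies on exactly $(q+2)/2$ of its bisecants, and a point not on a conic ($q$ odd) lies on at least $(q-1)/2$ of its bisecants; deleting $A^0_u$ and $A^\infty_u$ destroys at most two of these bisecants, so at least one bisecant of $C_u^*\cup\{T_u\}$ through any given point survives exactly under the hypothesis $q=4$ or $q\ge7$, and the second assertion is noted to follow in the same way. Your argument replaces these quoted facts with an explicit coordinate computation: infinite points via the solvability of $a+b=z$ in distinct nonzero elements, affine points with square $z$-coordinate via the lines through $T_u$, and the remaining affine points via the discriminant $w^2+4(z-y^2)$ together with your counting lemma that $w^2+c$ is a nonzero square for at least $(q-3)/2$ values of $w$ (proved by parametrising $v^2-w^2=c$ as a hyperbola). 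That counting lemma is in essence a hand-made substitute for the conic bisecant count, so both proofs rest on the same underlying geometry; what yours buys is self-containedness and constructiveness (it exhibits the secant and makes visible exactly where the count collapses at $q=5$), at the cost of length and a three-way case split that the synthetic argument avoids. Your reduction of the second assertion to the first (the only new point, $A^\infty_\rho$, lies in the enlarged set, and enlarging the set cannot uncover anything) is exactly right. One small caveat: your parenthetical claim that $q=5$ ``genuinely fails'' is true --- for instance the point $(1,1,2)\in\PG(2,5)$ lies on no secant of $C_1^*\cup\{T_1\}$ --- but it does not follow from your argument, which only shows that your lower bound degenerates to zero; actual failure of the statement requires exhibiting such an uncovered point.
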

\begin{proof}
If $q$ is even, every point of a plane outside of a hyperoval $C_u\cup \{T_u\}$ lies on $(q+2)/2$ its bisecants. If $q$ is odd, every point of a plane outside of a conic $C_u$ lies on at least $(q-1)/2$ its bisecants. At most two of these bisecants will be removed if one removes $A^0_u$ and $A^\infty_u$
 from~$C_u$.  Thus, for $q=4$ and $q\ge7$, every point of $\pi_u\setminus\{A^0_u,A^\infty_u\}$  lies on at least one bisecant of $C_u^*\cup \{T_u\}$.
The same holds for $\pi_\rho\setminus\{A^0_\rho\}$.
\end{proof}

\begin{proposition}\label{prop3_S1}
Let $q=4$ or $q\ge7$. Let $\Sigma_1=\PG(3,q)$. Let the set $S_1=\{A_0^0\}\cup L_0^*\cup C_1^*\cup \{T_1,A^\infty_1\}\subset\Sigma_1$ be as in~\eqref{eq3_constr}--\eqref{eq3_points}.  Let $\M_0(A^\infty_0)$ be as in \eqref{eq3_M_0}.  Then it holds that

  \emph{\textbf{(i)}} The $(2q+1)$-set $S_1$ is a minimal $1$-saturating set in $\Sigma_1$.

  \emph{\textbf{(ii)}} The point $A^\infty_1$ of $S_1$ is $1$-essential for the set $\widetilde{\M_1}(A^\infty_1)$ such that
\begin{align}\label{eq3_MAinfS1}
\widetilde{\M_1}(A^\infty_1)=\M_1(A^\infty_1)=\{(x_0\ldots x_3):(x_0x_1)\notin\M_0(A^\infty_0),(x_2x_3)=(0\x_3)\}.
\end{align}

  \emph{\textbf{(iii)}} The $2q$-set $S_1\setminus\{A^\infty_1\}$ is $2$-saturating in $\Sigma_1$.
\end{proposition}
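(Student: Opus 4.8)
The plan is to work directly in the coordinates of $\PG(3,q)$, using that $S_1$ splits into the line part $\{A_0^0\}\cup L_0^*$ lying in $\Sigma_0$ (coordinates $(x_0x_1)$) and the part $C_1^*\cup\{T_1,A_1^\infty\}$ lying in the plane $\pi_1$ (coordinates $(x_1x_2x_3)$), keeping in mind the identification $A_0^\infty=A_1^0=(0100)$, the single projective point shared by line and conic and lying in $S_1$ only implicitly. For the $1$-saturation in \textbf{(i)} I would split $\PG(3,q)$ into three regions. If $A\in\Sigma_0$ it lies on $L_0$, which carries the $q$ points $\{A_0^0\}\cup L_0^*=S_0\setminus\{A_0^\infty\}$; by Proposition~\ref{prop3_S0}(iii) any two of them span $L_0$, so $A$ is $1$-covered. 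If $A\in\pi_1$ with $A\ne A_1^0$, then $A$ is $1$-covered by $C_1^*\cup\{T_1,A_1^\infty\}$ by the second assertion of Lemma~\ref{lem3_piu-A0,Ainf_pirho-A0} with $\rho=1$, while $A=A_1^0\in\Sigma_0$ is already covered. The decisive case is $x_0\ne0$, $(x_2x_3)\ne(00)$: the last-two-coordinate directions realised by $C_1^*\cup\{T_1,A_1^\infty\}$ are $(1a)$ for $a\in\F_q^*$, $(10)$ and $(01)$, that is every direction of $\PG(1,q)$ exactly once, so a unique $Q$ in this set matches $(x_2x_3)$; the scalar on $Q$ is then forced and the residual line point $P=(1,b,0,0)$ is automatically a member of $\{A_0^0\}\cup L_0^*$, whence $A=P+Q$ is $1$-covered.

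That this ``one line point plus one plane point'' representation is the \emph{only} way to $1$-cover an off-subspace point is exactly what gives the minimality in \textbf{(i)}: each line point $(1,b_0,0,0)$ is forced by $A=(1,b_0,1,0)$, each conic point $(0,1,a_0,a_0^2)$ by $A=(1,0,a_0,a_0^2)$, and $T_1$ by $A=(1,0,1,0)$, since in each instance both the matching $Q$ and the matching line point are uniquely determined, so deleting the point leaves $A$ uncovered. For the essential-set statement \textbf{(ii)} I would argue that after deleting $A_1^\infty$ the only members of $S_1$ with nonzero last coordinate lie in $C_1^*$, hence any secant reaching $x_3\ne0$ must use a conic point; a check of the three possibilities (two conic points; a conic point with a line point; a conic point with $T_1$) shows that each either forces $x_3=0$ once $x_2=0$, or forces $x_0=0$ with $(x_0x_1)=(01)$, and that $A_1^\infty$ itself lies on none of them. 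Thus the uncovered points are precisely those with $(x_0x_1)\notin\M_0(A_0^\infty)=\{(01)\}$ (see \eqref{eq3_M_0}) and $(x_2x_3)=(0\x_3)$, which is the set $\widetilde{\M_1}(A^\infty_1)=\M_1(A^\infty_1)$ of \eqref{eq3_MAinfS1}; this proves \textbf{(ii)} and completes \textbf{(i)}.

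For \textbf{(iii)} I would show every point is $2$-covered by $S_1\setminus\{A^\infty_1\}$. By \textbf{(ii)} all points outside $\widetilde{\M_1}(A^\infty_1)$ are still $1$-covered, so only $\widetilde{\M_1}(A^\infty_1)$ needs a third point. A point $(1,x_1,0,\x_3)$ is written as $(1,b,0,0)$ plus a secant of $C_1^*\cup\{T_1\}$: choosing the line point with $b\ne x_1$ moves the residual $(0,x_1-b,0,\x_3)$ into $\pi_1\setminus\{A_1^0,A_1^\infty\}$, which the first assertion of Lemma~\ref{lem3_piu-A0,Ainf_pirho-A0} $1$-covers by two points of $C_1^*\cup\{T_1\}$; and $A_1^\infty=(0001)$ is covered by three conic points via the Vandermonde conditions $\sum\lambda_i=\sum\lambda_ia_i=0$, $\sum\lambda_ia_i^2\ne0$, solvable because $q=4$ or $q\ge7$ furnishes at least three distinct $a_i\in\F_q^*$.

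The step I expect to be the real obstacle is the exactness in \textbf{(ii)}: one must prove both that every listed point stays uncovered after deleting $A_1^\infty$ and that no other point does, so that the computed set equals $\widetilde{\M_1}(A_1^\infty)=\M_1(A_1^\infty)$ rather than merely sitting inside the uncovered locus. The delicate bookkeeping is the shared point $A_0^\infty=A_1^0$ together with the way the forbidden value $(01)$, recorded through $\M_0(A_0^\infty)$, propagates into the description of $\M_1(A_1^\infty)$; getting this recursive passage $\M_0\rightarrow\M_1$ right is precisely what makes the proposition reusable as the base case of the induction on $\rho$ for Construction~S.
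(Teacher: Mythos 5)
Your proposal is correct and follows essentially the same route as the paper's proof: the same splitting of $\PG(3,q)$ into $\Sigma_0$, $\pi_1$ and the remaining points, the same appeals to Proposition~\ref{prop3_S0}(iii) and Lemma~\ref{lem3_piu-A0,Ainf_pirho-A0}, and the same covering step for points outside $\Sigma_0\cup\pi_1$ --- your ``unique direction $(x_2x_3)$ matched by a unique point of $C_1^*\cup\{T_1,A^\infty_1\}$, residual absorbed by $\{A_0^0\}\cup L_0^*$'' is exactly the paper's three-case display \eqref{eq3_Ainf_rhoS1}. The only differences are that you spell out two things the paper leaves terse: the uniqueness of the line-point-plus-plane-point representation (from which the paper deduces minimality and part (ii) with just ``follows from \eqref{eq3_Ainf_rhoS1}''), and, in part (iii), the $2$-covering of $A^\infty_1=(0001)$ itself by three points of $C_1^*$ via a Vandermonde argument --- a point that does lie in $\widetilde{\M_1}(A^\infty_1)$ but that the paper's proof of (iii) silently omits, since it treats only the points $(1x_10\x_3)$.
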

\begin{proof} \textbf{(i)}
  By Proposition \ref{prop3_S0}(iii) and Lemma \ref{lem3_piu-A0,Ainf_pirho-A0}, $\Sigma_0$   and $\pi_1$  are $1$-covered by $\{A_0^0\}\cup L_0^*\cup C_1^*\cup \{T_1,A^\infty_1\}$. Hence, we should consider points of the form
  \begin{align}\label{eq3_BS1}
    B=(\widehat{x}_0x_1\overline{x_2x_3})=(1x_1\overline{x_2x_3})\in\Sigma_1\setminus(\Sigma_0\cup\pi_1).
  \end{align}
 We show that $B$ in \eqref{eq3_BS1} is a linear combination of at most 2 points of $S_1$.

1) Let $(x_0x_1)\in\M_0(A^\infty_0)$. By \eqref{eq3_BS1}, we have no such points $B$.

2) Let $(x_0x_1)\notin\M_0(A^\infty_0)$.
By the hypothesis, $(x_0x_100)$ is 0-covered by $S_0\setminus\{A^\infty_0\}$, i.e. $(x_0x_100)=(1x_100)\in\{A_0^0\}\cup L_0^*$.  For $B$ of  \eqref{eq3_BS1},    we have
  \begin{align}\label{eq3_Ainf_rhoS1}
  & B=(x_0x_10\x_3)=(x_0x_100)+\x_3(0001)=(x_0x_100)+\x_3A^\infty_1;\\
  &B=(x_0x_1\x_20)=(x_0x_100)+\x_2(0010)=(x_0x_100)+\x_2T_1;\notag\\
  &B=(x_0x_1\x_2\x_3)=(x_0z00)+\frac{\x_2^2}{\x_3}(01yy^2),~z=x_1-\frac{\x_2^2}{\x_3},~y=\frac{\x_3}{\x_2}.\notag
  \end{align}
Note that $(x_0z00)=(1z00)$ is 0-covered by   $S_0\setminus\{A^\infty_0\}$ for any $z$.

 From \eqref{eq3_Ainf_rhoS1}, we see that all points of $S_1$ are 1-essential.

  \textbf{(ii)}
The assertion follows from \eqref{eq3_Ainf_rhoS1}.

\textbf{(iii)}  We have, cf. \eqref{eq3_Ainf_rhoS1}, $(1x_10\x_3)=(1z00)+(010\x_3)$, where $z=x_1-1$ and\linebreak
 $(010\x_3)\in\pi_1\setminus\{A_1^0, A^\infty_1\}$ is 1-covered by $C_1^*\cup\{T_1\}$, see Lemma~\ref{lem3_piu-A0,Ainf_pirho-A0}.
               \end{proof}

\begin{proposition}\label{prop3_S2}
Let $q=4$ or $q\ge7$. Let $\Sigma_2=\PG(5,q)$. Let the set $S_2=\{A_0^0\}\cup L_0^*\cup C_1^*\cup \{T_1\}\cup C_2^*\cup \{T_2,A^\infty_2\}\subset\Sigma_2$ be as in~\eqref{eq3_constr}--\eqref{eq3_points}. Let $\M_1(A^\infty_1)$ be as in \eqref{eq3_MAinfS1}.  Then it holds that

  \emph{\textbf{(i)}}
The $(3q+1)$-set $S_2$  is a minimal $2$-saturating set in $\Sigma_2$.

\emph{\textbf{(ii)}}  The point $A^\infty_2$ of  $S_2$ is $2$-essential for the set $\widetilde{\M_2}(A^\infty_2)$ such that
  \begin{align}\label{eq3_MAinfS2}
  \widetilde{\M_2}(A^\infty_2)\subset\M_2(A^\infty_2)=\{(x_0\ldots x_5):(x_0\ldots x_3)\notin\M_1(A^\infty_1),~(x_4x_5)=(0\x_5)\}.
  \end{align}

\emph{\textbf{(iii)}}
The $3q$-set $S_2\setminus\{A^\infty_2\}$  is $3$-saturating in $\Sigma_2$.

\end{proposition}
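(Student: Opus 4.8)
The plan is to mirror the proof of Proposition~\ref{prop3_S1}, lifting it by one ``oval layer.'' Writing $S_2=(S_1\setminus\{A^\infty_1\})\cup C_2^*\cup\{T_2,A^\infty_2\}$ and recalling from Lemma~\ref{lem3_Ainf=A0__pi_u} that $A^0_2=A^\infty_1$ (so the apex dropped when passing from $S_1$ to $S_2$ reappears inside $\pi_2$), I split $\Sigma_2=\PG(5,q)$ into three regions: the subspace $\Sigma_1$ (the points with $x_4=x_5=0$), the plane $\pi_2$ (the points with $x_0=x_1=x_2=0$), and the generic points $B=(x_0x_1x_2x_3x_4x_5)$ lying in neither, for which $(x_0x_1x_2)\ne(000)$ and $(x_4x_5)\ne(00)$. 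The engine of every reduction is to subtract one point (or at most two) of the $\pi_2$-part so as to land in $\Sigma_1$, and then to invoke the saturation properties of $S_1\setminus\{A^\infty_1\}$ established in Proposition~\ref{prop3_S1}.

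For part~(i) I argue region by region. Points of $\Sigma_1$ are $2$-covered by $S_1\setminus\{A^\infty_1\}\subset S_2$: those outside $\M_1(A^\infty_1)$ are even $1$-covered by Proposition~\ref{prop3_S1}(i),(ii), while those in $\M_1(A^\infty_1)$ are $2$-covered by Proposition~\ref{prop3_S1}(iii). Points of $\pi_2\setminus\{A^0_2\}$ are $1$-covered by $C_2^*\cup\{T_2,A^\infty_2\}$ by Lemma~\ref{lem3_piu-A0,Ainf_pirho-A0} (case $u=\rho=2$), and $A^0_2=A^\infty_1\in\Sigma_1$ is already handled. For a generic $B$ I branch on $(x_4x_5)$: subtracting $\x_5A^\infty_2$ when $x_4=0$, or $\x_4T_2$ when $x_5=0$, or the single conic point $\tfrac{x_4^2}{x_5}(000\,1\,yy^2)$ with $y=x_5/x_4$ when $x_4x_5\ne0$, in each case leaves a residual $(x_0x_1x_2\,z\,00)\in\Sigma_1$; if this residual lies outside $\M_1(A^\infty_1)$ it is $1$-covered by at most two points, giving $\le3$ points in all. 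The only awkward residuals are those in $\M_1(A^\infty_1)$, which by \eqref{eq3_MAinfS1} force $x_2=0$ and $(x_0x_1)\ne(01)$; for these I switch to the split $B=(x_0x_100\,0\,0)+(000\,x_3\,x_4x_5)$, whose $\Sigma_0$-part is $0$-covered by a single point of $\{A_0^0\}\cup L_0^*$ (since $(x_0x_1)\ne(01)$) and whose $\pi_2$-part lies in $\pi_2\setminus\{A^0_2\}$ and is $1$-covered by at most two points, again totalling $\le3$.

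For part~(ii) the covering identities above single out $\M_2(A^\infty_2)$ of \eqref{eq3_MAinfS2} as the natural over-set: it consists exactly of the points of the $x_4=0$, $x_5\ne0$ branch whose $\Sigma_1$-residual $(x_0x_1x_2x_3)$ lies outside $\M_1(A^\infty_1)$, i.e.\ those whose three-point covering produced in part~(i) uses $A^\infty_2$. To establish essentiality on the subset $\widetilde{\M_2}(A^\infty_2)$ I note that, since every point of $\Sigma_1$ has $x_4=x_5=0$, the coordinates $x_4,x_5$ of $B$ are produced entirely by the $\pi_2$-points of any combination; once $A^\infty_2$ is discarded, the value $x_5\ne0$ with $x_4=0$ can be realised only by at least two points of $C_2^*\cup\{T_2\}$ (each $C_2^*$ point carries $x_4\ne0$, and $T_2$ carries $x_5=0$). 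This exhausts the budget and leaves at most one point for the direction $(x_0x_1x_2x_3)$, which is impossible exactly when that direction is not $0$-coverable — this describes $\widetilde{\M_2}(A^\infty_2)$ and explains why it may be strictly smaller than $\M_2(A^\infty_2)$. This is the analogue of ``the assertion follows from~\eqref{eq3_Ainf_rhoS1}'' in Proposition~\ref{prop3_S1}(ii).

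For part~(iii) I must re-cover, without $A^\infty_2$ and within budget $\le4$, precisely the points $B=(x_0x_1x_2x_3\,0\,\x_5)$ that used $A^\infty_2$ above. Here I exploit the freedom in splitting $x_3=z+w$: writing $B=(x_0x_1x_2\,z\,00)+(000\,w\,0\,\x_5)$ with $w\ne0$ chosen a nonzero square, the $\pi_2$-part lies in $\pi_2\setminus\{A^0_2,A^\infty_2\}$ and is $1$-covered by at most two points of $C_2^*\cup\{T_2\}$ via Lemma~\ref{lem3_piu-A0,Ainf_pirho-A0}, and $w$ is chosen so that the residual $(x_0x_1x_2\,z\,00)$ avoids $\M_1(A^\infty_1)$ and is $1$-covered by at most two points, for a total of $\le4$; the degenerate subcase $x_2=x_3=0$ reduces to the same device after peeling off the $\Sigma_0$-part. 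This parallels the shift $z=x_1-1$ used in Proposition~\ref{prop3_S1}(iii). The main obstacle throughout is the budget bookkeeping in the generic case of part~(i): pinning down exactly which residuals drop into the forbidden $\M_1(A^\infty_1)$ and verifying that the alternative $\Sigma_0+\pi_2$ split — which relies on $x_2=0$ and $(x_0x_1)\ne(01)$ holding together in just those cases — never spends more than three points, together with the counting in part~(ii) that genuinely forbids realising the direction of $A^\infty_2$ with $x_4=0$ from $C_2^*\cup\{T_2\}$ alone within budget.
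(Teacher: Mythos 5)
Your part~(i) covering argument --- the split into $\Sigma_1$, $\pi_2$ and generic points, the three branches on $(x_4,x_5)$, and the fallback split $B=(x_0x_10000)+(000x_3x_4x_5)$ when the residual lands in $\M_1(A^\infty_1)$ --- coincides in substance with the paper's proof and is correct. But it only proves that $S_2$ is $2$-saturating, whereas part~(i) asserts that $S_2$ is a \emph{minimal} $2$-saturating set: you must show that \emph{every} point of $S_2$ is $2$-essential, and your proposal addresses only $A^\infty_2$ (in part~(ii)). The paper closes this by noting that its explicit representations force each point of $C_2^*\cup\{T_2,A^\infty_2\}$ to be used for suitable $B$, and that the points of $S_1\setminus\{A^\infty_1\}$ remain essential because $S_1$ is a \emph{minimal} $1$-saturating set. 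This whole layer is missing from your proof.

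There are also two concrete errors. In part~(ii), your budget count is right (at least two points of $C_2^*\cup\{T_2\}$ are needed to realise $(x_4x_5)=(0\x_5)$), but the conclusion ``impossible exactly when the direction $(x_0x_1x_2x_3)$ is not $0$-coverable'' is false in both directions, because any two such $\pi_2$-points necessarily contribute a \emph{nonzero} amount to $x_3$ as well: for two conic points, $c_1a_1+c_2a_2=0$ together with $c_1+c_2=0$ would force $c_1(a_1-a_2)=0$; for a conic point plus $T_2$, the $x_3$-contribution is $c_1\ne0$. Hence the single remaining point of $S_1\setminus\{A^\infty_1\}$ must match a \emph{shifted} direction $(x_0x_1x_2,x_3-\delta)$ with $\delta\ne0$. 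The paper's own examples refute your criterion: $B=(001\x_30\x_5)$ has a non-$0$-coverable direction yet \emph{is} $2$-covered without $A^\infty_2$ (as $T_1$ plus two points of $C_2^*\cup\{T_2\}$ covering $(000\x_30\x_5)$), while $B=(00100\x_5)$ has the $0$-coverable direction $T_1$ yet \emph{cannot} avoid $A^\infty_2$. So the set you describe as $\widetilde{\M_2}(A^\infty_2)$ contains $2$-covered points, and no genuine witness of essentiality is produced; this undercuts part~(ii), the minimality in~(i), and the claim in~(iii) that $3$ is the exact saturation value of $S_2\setminus\{A^\infty_2\}$. Finally, in part~(iii) the degenerate case $x_2=x_3=0$ does not ``reduce to the same device'': after peeling off $(1x_10000)$ the remainder is $(00000\x_5)$, a multiple of $A^\infty_2$ itself, and no split $x_3=z+w$ with $w\ne0$ is available (it would require $A^\infty_2$ or the point $A^0_2\notin S_2$). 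What is needed here is Lemma~\ref{lem3_Ainf=A0__pi_u}: $A^\infty_2$ is $2$-covered by $C_2^*$, i.e.\ equals a combination of three points of $C_2^*$, giving the total $1+3=4$ --- a step you never invoke.
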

\begin{proof} \textbf{(i)}
By Propositions \ref{prop3_S0} and \ref{prop3_S1} and Lemmas \ref{lem3_Ainf=A0__pi_u} and \ref{lem3_piu-A0,Ainf_pirho-A0}, it holds that  $\Sigma_0$ is $1$-covered by $\{A_0^0\}\cup L_0^*$;
$\pi_1$  and $\pi_2$  are 2-covered by $C_1^*$ and $C_2^*$, respectively; $\pi_2\setminus\{A^0_2\}$ is $1$-covered by $C_2^*\cup \{T_2,A^\infty_2\}$; $\Sigma_1$  is 2-covered by $S_1\setminus\{A^\infty_1\}$. Recall that $\Sigma_0\cup\pi_1\subset\Sigma_1$. So, we should consider points of the form
\begin{align}\label{eq3_BS2}
B=(\overline{x_0x_1x_2}x_3\overline{x_4x_5})\in\Sigma_2\setminus(\Sigma_1\cup\pi_2).
\end{align}
 We show that $B$ in \eqref{eq3_BS2} is a linear combination of at most 3 points of $S_2$.

1) Let $(x_0\ldots x_3)\in\M_1(A^\infty_1)$.
By the hypothesis and by \eqref{eq3_MAinfS1},  \eqref{eq3_BS2}, we have
\begin{align*}
(x_0x_1)\notin\M_0(A^\infty_0),~  B=(x_0x_10\x_3\overline{x_4x_5})=(x_0x_10000)+(000\x_3\overline{x_4x_5}),
\end{align*}
where $(x_0x_10000)$ is 0-covered by  $S_0\setminus\{A^\infty_0\}$ and $(000\x_3\overline{x_4x_5})\in\pi_2\setminus\{A^0_2,A^\infty_2\}$ is 1-covered by $C_2^*\cup\{T_2\}$, see Lemma \ref{lem3_piu-A0,Ainf_pirho-A0}.

2) Let $(x_0\ldots x_3)\notin\M_1(A^\infty_1)$.

By the hypothesis, $(x_0\ldots x_300)$ is 1-covered by $S_1\setminus\{A^\infty_1\}$.  Also,
 \begin{align}\label{eq3_Ainf_rhoS2}
&  B=(x_0\ldots x_30\x_5)=(x_0\ldots x_300)+\x_5(000001)=(x_0\ldots x_300)+\x_5A^\infty_2;\displaybreak[3]\\
&B=(x_0\ldots x_3\x_40)=(x_0\ldots x_300)+\x_4(000010)=(x_0\ldots x_300)+\x_4T_2;\displaybreak[3]\label{eq3_T_rhoS2}\\
&B=(x_0\ldots x_3\x_4\x_5)=(x_0x_1x_2z00)+\frac{\x_4^2}{\x_5}(0001yy^2),~z=x_3-\frac{\x_4^2}{\x_5},~y=\frac{\x_5}{\x_4}.\label{eq3_v4v5S2}
  \end{align}
In \eqref{eq3_Ainf_rhoS2}, \eqref{eq3_T_rhoS2}, $B$ is a linear combination of at most $(1+1)+1=3$ points. If $(x_0x_1x_2z)\notin\M_1(A^\infty_1)$, then the representation \eqref{eq3_v4v5S2} is the needed linear combination. If $(x_0x_1x_2z)\in\M_1(A^\infty_1)$  whereas $(x_0\ldots x_3)\notin\M_1(A^\infty_1)$, then the only possible case is   $(x_0x_1)\notin\M_0(A^\infty_0)$ with $(x_2x_3)=(00)$, see
\eqref{eq3_MAinfS1}. In this case,
  \begin{align}\label{eq3_Ainf_rhoS2_1x_100}
&B=(x_0x_100\x_4\x_5)=(1x_100\x_4\x_5)=(1x_10000)+(0000\x_4\x_5),
  \end{align}
  where $(1x_10000)$ is 0-covered by $\{A_0^0\}\cup L_0^*$ and $(0000\x_4\x_5)\in\pi_2\setminus\{A^0_2,A^\infty_2\}$ is $1$-covered by $C_2^*\cup\{T_2\}$, see Lemma \ref{lem3_piu-A0,Ainf_pirho-A0}. Thus, $B$ in \eqref{eq3_Ainf_rhoS2_1x_100} is a linear combination of at most $(0+1)+(1+1)=3$ points.

  From \eqref{eq3_Ainf_rhoS2}--\eqref{eq3_Ainf_rhoS2_1x_100} we see that all points of $S_2\setminus S_1$ are 2-essential. Also, we take into account that $S_1$ is a \emph{minimal} 1-saturating set.

\textbf{(ii)}
The assertion follows from  \eqref{eq3_Ainf_rhoS2}.
For some (but not for all) points in \eqref{eq3_Ainf_rhoS2} we could avoid use of $A^\infty_2$; this explains the sign ``$\subset$'' in \eqref{eq3_MAinfS2}. Let, for example, $B=(001\x_30\x_5)\notin\M_1(A^\infty_1)$. Then $B=(001000)+\x_3\left(00010\frac{\x_5}{\x_3}\right),$ where $(001000)=T_1$ and  $\left(00010\frac{\x_5}{\x_3}\right)\in\pi_2\setminus\{A^0_2,A^\infty_2\}$ is $1$-covered by $C_2^*\cup \{T_2\}$. But, if $B=(00100\x_5)\notin\M_1(A^\infty_1)$, we are not able to avoid~$A^\infty_2$.

\textbf{(iii)}   We have, cf.~\eqref{eq3_Ainf_rhoS2}, $ B=(x_0\ldots x_30\x_5)=(x_0x_1x_2z00)+(00010\x_5),$
where $z=x_3-1$ and $(00010\x_5)\in\pi_2\setminus\{ A_2^0,A^\infty_2\}$ is 1-covered by $C_2^*\cup\{T_2\}$, see Lemma~\ref{lem3_piu-A0,Ainf_pirho-A0}. This representation of $B$ is the needed linear combination of at most $(1+1)+(1+1)=4$ columns if $(x_0x_1x_2z)\notin\M_1(A^\infty_1)$ whence $(x_0x_1x_2z00)$ is 1-covered by $S_1\setminus \{A^\infty_1\}$.

But if $(x_0x_1x_2z)\in\M_1(A^\infty_1)$, then by \eqref{eq3_MAinfS1}, $(x_0x_1)\notin\M_0(A^\infty_0)$ and we have, similarly to~\eqref{eq3_Ainf_rhoS2_1x_100},
$B=(1x_1000\x_5)=(1x_10000)+\x_5(000001),$
 where
$(1x_10000)$ is 0-covered by $\{A_0^0\}\cup L_0^*$  and $(000001)=A^\infty_2\in\pi_2$ is 2-covered by $C_2^*$, see Lemma~\ref{lem3_Ainf=A0__pi_u}.
\end{proof}

\begin{theorem}\label{th3_main_geom}
Let $q=4$ or $q\ge7$. Let $\Upsilon\ge1$. Let $\Sigma_\rho=\PG(2\rho+1,q)$. Let $S_\rho$ be a point $((\rho+1)q+1)$-subset of $\Sigma_\rho$ as in Construction S of \eqref{eq3_constr}--\eqref{eq3_points}.  Then it holds that

  \emph{\textbf{(i)}}
The $((\rho+1)q+1)$-set $S_\rho$ is a minimal $\rho$-saturating set in $\Sigma_\rho$, $\rho=0,1,\ldots,\Upsilon$.

  \emph{\textbf{(ii)}} The point $A^\infty_\rho$ of  $S_\rho$ is $\rho$-essential for the set $\widetilde{\M_\rho}(A^\infty_\rho)$ such that
  \begin{align}
    &\widetilde{\M_0}(A^\infty_0)=\M_0(A^\infty_0)=\{(01)\},\notag\\
    &\widetilde{\M_1}(A^\infty_1)=\M_1(A^\infty_1)=\{(x_0\ldots x_3):(x_0x_1)\notin\M_0(A^\infty_0),(x_2x_3)=(0\x_3)\},\notag\\
    &\widetilde{\M_\rho}(A^\infty_\rho)\subset\M_\rho(A^\infty_\rho)=\{(x_0\ldots x_{2\rho+1}):(x_0\ldots x_{2\rho-1})\notin\M_{\rho-1}(A^\infty_{\rho-1}),\label{eq3_MAinfSrho} \\
    &(x_{2\rho}x_{2\rho+1})=(0\x_{2\rho+1})\},~\rho=2,3,\ldots,\Upsilon.\notag
  \end{align}

 \emph{\textbf{(iii)}}
The $(\rho+1)q$-set $S_\rho\setminus\{A^\infty_\rho\}$  is $(\rho+1)$-saturating in $\Sigma_\rho$, $\rho=0,1,\ldots,\Upsilon$.
\end{theorem}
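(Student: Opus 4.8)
The plan is to prove all three parts simultaneously by induction on $\rho$, taking Propositions \ref{prop3_S0}, \ref{prop3_S1} and \ref{prop3_S2} as the base cases $\rho=0,1,2$ and using the proof of Proposition \ref{prop3_S2} as the exact template for the inductive step. The recursive shape of $\M_\rho(A^\infty_\rho)$ in \eqref{eq3_MAinfSrho}, the nested subspaces $\Sigma_{\rho-1}\subset\Sigma_\rho$, and the plane $\pi_\rho$ are arranged so that the passage from $\rho-1$ to $\rho$ repeats, with coordinate indices shifted by $2(\rho-2)$, the passage from $1$ to $2$ carried out in Proposition \ref{prop3_S2}. The one bookkeeping fact to record at the outset is that $S_\rho\setminus\{A^\infty_\rho\}=(S_{\rho-1}\setminus\{A^\infty_{\rho-1}\})\cup C_\rho^*\cup\{T_\rho\}$, while $A^\infty_{\rho-1}=A^0_\rho$ by Lemma \ref{lem3_Ainf=A0__pi_u}; hence the induction hypotheses get applied to $S_{\rho-1}\setminus\{A^\infty_{\rho-1}\}$, which is a genuine subset of $S_\rho$.

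For the saturation in (i) I would split an arbitrary $B\in\Sigma_\rho$ into three classes as in \eqref{eq3_BS2}. Points of $\Sigma_{\rho-1}$ are $\rho$-covered by $S_{\rho-1}\setminus\{A^\infty_{\rho-1}\}$, which is $\rho$-saturating in $\Sigma_{\rho-1}$ by induction hypothesis (iii) at level $\rho-1$. Points of $\pi_\rho\setminus\{A^0_\rho\}$ are $1$-covered by $C_\rho^*\cup\{T_\rho,A^\infty_\rho\}$ by Lemma \ref{lem3_piu-A0,Ainf_pirho-A0}. For a general $B\notin\Sigma_{\rho-1}\cup\pi_\rho$ I would distinguish, exactly as in Proposition \ref{prop3_S2}, whether the truncation $(x_0\ldots x_{2\rho-1})$ lies in $\M_{\rho-1}(A^\infty_{\rho-1})$. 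When it does not, the crucial fact (the converse of hypothesis (ii) combined with (i) at level $\rho-1$) is that $(x_0\ldots x_{2\rho-1}0\,0)$ is $(\rho-1)$-covered by $S_{\rho-1}\setminus\{A^\infty_{\rho-1}\}$; I then correct the last two coordinates by one of $A^\infty_\rho$, $T_\rho$, or a point of $C_\rho^*$, as in \eqref{eq3_Ainf_rhoS2}--\eqref{eq3_v4v5S2}, for a total of at most $(\rho-1+1)+1=\rho+1$ points. The only delicate sub-case is when modifying $x_{2\rho-1}$ throws the truncation back into $\M_{\rho-1}$; then \eqref{eq3_MAinfSrho} forces $(x_0\ldots x_{2\rho-3})\notin\M_{\rho-2}$, and I drop one further level, covering $(x_0\ldots x_{2\rho-3}0\ldots0)$ by $S_{\rho-2}\setminus\{A^\infty_{\rho-2}\}$ (hypothesis at level $\rho-2$) and the tail by $C_\rho^*\cup\{T_\rho\}$, as in \eqref{eq3_Ainf_rhoS2_1x_100}. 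Parts (ii) and (iii) then follow by reproducing \eqref{eq3_Ainf_rhoS2} and the part (iii) argument of Proposition \ref{prop3_S2} with the same index shift, where in (iii) the excluded point $A^\infty_\rho$ is recovered by invoking Lemma \ref{lem3_Ainf=A0__pi_u} to $2$-cover it through $C_\rho^*$, keeping the count at most $(\rho+1)+1$.

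I expect two places to demand the most care. The first, and the genuine obstacle, is the minimality claim in (i): the new points $C_\rho^*\cup\{T_\rho,A^\infty_\rho\}$ are shown $\rho$-essential directly from the covering decompositions, but the essentiality of the inherited points of $S_{\rho-1}\setminus\{A^\infty_{\rho-1}\}$ must be transferred from their $(\rho-1)$-essentiality in the minimal set $S_{\rho-1}$, and one must check that neither the extra points $C_\rho^*\cup\{T_\rho\}$ nor the one extra allowed summand supplies an alternative covering of the associated witness points. I would handle this by sharpening hypothesis (ii) to name an explicit witness in $\widetilde{\M}_{\rho-1}(A)$ for each inherited essential point $A$, lift it with zero tail coordinates, and verify that any covering of the lifted witness that used a tail point would project to a covering of the original witness by $S_{\rho-1}\setminus\{A^\infty_{\rho-1},A\}$, contradicting $(\rho-1)$-essentiality. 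The second is purely computational but error-prone: confirming that the recursive split on membership in $\M_{\rho-1},\M_{\rho-2},\ldots$ always terminates with a covering of size at most $\rho+1$ (resp. $\rho+2$ in (iii)), i.e.\ that the chain $\widetilde{\M}_\rho\subseteq\M_\rho$ of \eqref{eq3_MAinfSrho} stays consistent at every level; this I would reduce by the coordinate shift to the already-verified computations of Proposition \ref{prop3_S2}.
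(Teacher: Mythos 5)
Your inductive scheme for the saturation assertions is precisely the paper's own proof, up to the relabelling $\Gamma=\Upsilon+1$: the same base cases (Propositions \ref{prop3_S0}--\ref{prop3_S2}), the same reduction of a general point $B\in\Sigma_\rho\setminus(\Sigma_{\rho-1}\cup\pi_\rho)$ via the split on whether $(x_0\ldots x_{2\rho-1})\in\M_{\rho-1}(A^\infty_{\rho-1})$, the same corrections by $A^\infty_\rho$, $T_\rho$ or a point of $C_\rho^*$, the same descent to the hypothesis two levels down in the sub-case $(x_{2\rho-2}x_{2\rho-1})=(00)$ (which is why the base must reach $\rho=2$, as you arranged), and the same counts $\rho+1$ for (i)--(ii) and $\rho+2$ for (iii). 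Your bookkeeping facts ($S_{\rho-1}\setminus\{A^\infty_{\rho-1}\}\subset S_\rho$, $A^\infty_{\rho-1}=A^0_\rho$) are correct and are exactly what the paper uses implicitly.

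The gap is in the minimality transfer, which you rightly single out as the crux (the paper settles it in one sentence, ``we take into account that $S_\Upsilon$ is a \emph{minimal} $\Upsilon$-saturating set,'' so you are supplying an argument the paper only asserts); but your witness fails. Suppose $A\in S_{\rho-1}\setminus\{A^\infty_{\rho-1}\}$ has witness $W\in\widetilde{\M}_{\rho-1}(A)$, i.e. $W$ is not a combination of at most $\rho$ points of $S_{\rho-1}\setminus\{A\}$. If you lift $W$ \emph{with zero tail}, nothing forces a covering of the lifted point to use any point of $C_\rho^*\cup\{T_\rho,A^\infty_\rho\}$: at level $\rho$ the budget is $\rho+1$ summands, and a combination of exactly $\rho+1$ points of $S_{\rho-1}\setminus\{A^\infty_{\rho-1},A\}$ is admissible yet is \emph{not} excluded by the $(\rho-1)$-essentiality of $A$, which only forbids combinations of at most $\rho$ points. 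This threat is real, not formal: assertion (iii) at level $\rho-1$ says precisely that one extra summand suffices to cover all of $\Sigma_{\rho-1}$ without $A^\infty_{\rho-1}$, so witnesses generally die when the budget grows by one; your projection argument only bites when tail points actually occur. The repair is to lift with a \emph{nonzero} tail: take $B=W+A^\infty_\rho$, with coordinates $(w_0\ldots w_{2\rho-1}\,0\,\x_{2\rho+1})$. Then any covering of $B$ by at most $\rho+1$ points of $S_\rho\setminus\{A\}$ must spend at least one summand on tail points. If exactly one, that point must be $A^\infty_\rho$ (a single point of $C_\rho^*$ or $T_\rho$ cannot produce tail $(0\,\x_{2\rho+1})$), and subtracting it expresses $W$ as a combination of at most $\rho$ points of $S_{\rho-1}\setminus\{A\}$, a contradiction. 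If at least two, then at most $\rho-1$ inherited points occur, and truncating the last two coordinates (under which points of $C_\rho^*$ all project onto $A^0_\rho=A^\infty_{\rho-1}$, while $T_\rho,A^\infty_\rho$ project to zero) expresses $W$ as a combination of at most $\rho$ points of $S_{\rho-1}\setminus\{A\}$ --- note the projected covering lands in $S_{\rho-1}\setminus\{A\}$, not in $S_{\rho-1}\setminus\{A^\infty_{\rho-1},A\}$ as you wrote, which is still enough --- again a contradiction. With this corrected witness your strategy goes through and completes the proof; as stated, it does not.
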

\begin{proof}
  We prove by induction on $\Upsilon$.

  For $\Upsilon=3$ the theorem is proved in Propositions \ref{prop3_S0}, \ref{prop3_S1}, \ref{prop3_S2}.

  \emph{Assumption: let the assertions \emph{(i)--(iii)} hold for some $\Upsilon\ge3$.}

  We show that under Assumption, the assertions hold for $\Gamma=\Upsilon+1$.

  \textbf{(i)} By Propositions \ref{prop3_S0}, \ref{prop3_S1}, and \ref{prop3_S2}, Lemmas \ref{lem3_piu-A0,Ainf_pirho-A0} and \ref{lem3_Ainf=A0__pi_u}, and Assumption, we have the following:  $\Sigma_0$  is $1$-covered by $\{A_0^0\}\cup L_0^*$;
 $\pi_1\setminus\{ A^\infty_1\}$, $\pi_u\setminus\{A^0_u,A^\infty_u\}$, $u=2,3,\ldots,\Gamma$, are $1$-covered by $\{A_0^0\}\cup L_0^*\cup\bigcup\limits_{u=1}^{\Gamma}\left(C_u^*\cup\{T_u\}\right)$;  $\pi_{\Gamma}\setminus\{A^0_{\Gamma}\}$ is $1$-covered by $C_\Gamma^*\cup \{T_\Gamma,A^\infty_\Gamma\}$; $\pi_1$, $\pi_2$, $\ldots$, $\pi_\Gamma$ are 2-covered by $C_1^*$, $C_2^*$, $\ldots$,  $C_\Gamma^*$, respectively; $\Sigma_\Upsilon$ is $\Gamma$-covered by $S_\Upsilon\setminus\{A^\infty_\Upsilon\}$. Recall that $\Sigma_0\cup\bigcup\limits_{u=1}^\Upsilon\pi_u\subset\Sigma_\Upsilon$.
So, we should consider points of the form
\begin{align}\label{eq3_BS3}
  B=(\overline{x_0\ldots x_{2\Gamma-2}}x_{2\Gamma-1}\overline{x_{2\Gamma}x_{2\Gamma+1}})\in\Sigma_{\Gamma}\setminus(\Sigma_\Upsilon\cup\pi_{\Gamma}).
\end{align}
 We show that $B$ in \eqref{eq3_BS3} is a linear combination of at most $\Gamma+1$ points of $S_\Gamma$.

1) Let $(x_0\ldots x_{2\Gamma-1})\in\M_\Upsilon(A^\infty_\Upsilon)$.\\
By the hypothesis and by \eqref{eq3_MAinfSrho}, $(x_0\ldots x_{2\Upsilon-1})\notin\M_{\Upsilon-1}(A^\infty_{\Upsilon-1})$. Therefore,\\
 $(x_0\ldots x_{2\Upsilon-1}0000)$ is $(\Upsilon-1)$-covered by  $S_{\Upsilon-1}\setminus\{A^\infty_{\Upsilon-1}\}$.  Now by
\eqref{eq3_BS3}, we have
\begin{align}\label{eq3_Bin_rho}
&  B=(x_0\ldots x_{2\Upsilon-1}0\x_{2\Gamma-1}\overline{x_{2\Gamma}x_{2\Gamma+1}})=(x_0\ldots x_{2\Upsilon-1}0000)+(0\ldots0\x_{2\Gamma-1}\overline{x_{2\Gamma}x_{2\Gamma+1}}),
\end{align}
\looseness=-1 where $(0\ldots0\x_{2\Gamma-1}\overline{x_{2\Gamma}x_{2\Gamma+1}})\in\pi_\Gamma\setminus\{A^0_\Gamma,A^\infty_\Gamma\}$ is 1-covered by $C_\Gamma^*$, see Lemma \ref{lem3_piu-A0,Ainf_pirho-A0}. So, $B$ in \eqref{eq3_Bin_rho} is a linear combination of at most $(\Upsilon-1+1)+(1+1)=\Gamma+1$ points.

2)   Let $(x_0\ldots x_{2\Gamma-1})\notin\M_\Upsilon(A^\infty_\Upsilon)$. \\
By the hypothesis, $(x_0\ldots x_{2\Gamma-1}00)$ is $\Upsilon$-covered by $S_\Upsilon\setminus\{A^\infty_\Upsilon\}$. We can write
 \begin{align}\label{eq3_Ainf_rhoSrho}
&  B=(x_0\ldots x_{2\Gamma-1}0\x_{2\Gamma+1})=(x_0\ldots x_{2\Gamma-1}00)+\x_{2\Gamma+1}A^\infty_\Gamma;\displaybreak[2]\\
&B=(x_0\ldots x_{2\Gamma-1}\x_{2\Gamma}0)=(x_0\ldots x_{2\Gamma-1}00)+\x_{2\Gamma}T_\Gamma;\displaybreak[2]\label{eq3_T_rhoSrho}\\
&B=(x_0\ldots x_{2\Gamma-1}\x_{2\Gamma}\x_{2\Gamma+1})=(x_0\ldots x_{2\Gamma-2}z00)+\frac{\x_{2\Gamma}^2}{\x_{2\Gamma+1}}(0\ldots01yy^2),\label{eq3_v4v5Srho}\\
&z=x_{2\Gamma-1}-\frac{\x_{2\Gamma}^2}{\x_{2\Gamma+1}},~y=\frac{\x_{2\Gamma+1}}{\x_{2\Gamma}}.\notag
  \end{align}
In \eqref{eq3_Ainf_rhoSrho}, \eqref{eq3_T_rhoSrho}, $B$ is a linear combination of at most $(\Upsilon+1)+1=\Gamma+1$ points.
If\\ $(x_0\ldots x_{2\Gamma-2}z)\notin\M_\Upsilon(A^\infty_\Upsilon)$, then the representation \eqref{eq3_v4v5Srho} is the needed linear combination. If $(x_0\ldots x_{2\Gamma-2}z)\in\M_\Upsilon(A^\infty_\Upsilon)$  while $(x_0\ldots x_{2\Gamma-1})\notin\M_\Upsilon(A^\infty_\Upsilon)$, then the only possibility is\linebreak   $(x_0\ldots x_{2\Upsilon-1})\notin\M_{\Upsilon-1}(A^\infty_{\Upsilon-1})$ with $(x_{2\Gamma-2}x_{2\Gamma-1})=(00)$, see
\eqref{eq3_MAinfSrho}. In this case,
  \begin{align}\label{eq3_Ainf_rhoSrho_1x_100}
&B=(x_0\ldots x_{2\Upsilon-1}00\x_{2\Gamma}\x_{2\Gamma+1})=(x_0\ldots x_{2\Upsilon-1}0000)+(0\ldots0\x_{2\Gamma}\x_{2\Gamma+1}),
  \end{align}
  where $(x_0\ldots x_{2\Upsilon-1}0000)$ is $(\Upsilon-1)$-covered by  $S_{\Upsilon-1}\setminus\{A^\infty_{\Upsilon-1}\}$ and \\ $(0\ldots0\x_4\x_{2\Gamma-1})\in\pi_\Gamma\setminus\{A^0_\Gamma,A^\infty_\Gamma\}$ is $1$-covered by $C_\Gamma^*\cup\{T_\Gamma\}$, see Lemma \ref{lem3_piu-A0,Ainf_pirho-A0}. Thus, $B$ in \eqref{eq3_Ainf_rhoSrho_1x_100} is a linear combination of at most $(\Upsilon-1+1)+(1+1)=\Gamma+1$ points.

  From \eqref{eq3_Bin_rho}--\eqref{eq3_Ainf_rhoSrho_1x_100} we see that all the points of $S_\Gamma\setminus S_\Upsilon$ are $\Gamma$-essential. Also, we take into account that $S_\Upsilon$ is a \emph{minimal} $\Upsilon$-saturating set.

\textbf{(ii)}
The assertion \eqref{eq3_MAinfSrho} follows from  \eqref{eq3_Ainf_rhoSrho}.
For some (but not for all) points in \eqref{eq3_Ainf_rhoSrho} we could avoid use of $A^\infty_\Gamma$. This explains the sign ``$\subset$'' in \eqref{eq3_MAinfSrho}.

\textbf{(iii)}   We have, cf.~\eqref{eq3_Ainf_rhoSrho}, $B=(x_0\ldots x_{2\Gamma-1}0\x_{2\Gamma+1})=(x_0\ldots x_{2\Gamma-2}z00)+\\(0\ldots010\x_{2\Gamma+1}),$
   where $z=x_{2\Gamma-1}-1$ and $(0\ldots010\x_{2\Gamma+1})\in\pi_\Gamma\setminus\{ A_\Gamma^0,A^\infty_\Gamma\}$ is 1-covered by $C_\Gamma^*$, see Lemma~\ref{lem3_piu-A0,Ainf_pirho-A0}. This representation of $B$ is the needed linear combination of at most $(\Upsilon+1)+(1+1)=\Gamma+2$ points if  $(x_0\ldots x_{2\Gamma-2}z)\notin\M_\Upsilon(A^\infty_\Upsilon)$ whence $(x_0\ldots x_{2\Gamma-2}z00)$ is $\Upsilon$-covered by $S_\Upsilon\setminus A^\infty_\Upsilon$.

   But if  $(x_0\ldots x_{2\Gamma-2}z)\in\M_\Upsilon(A^\infty_\Upsilon)$, then by \eqref{eq3_MAinfSrho}, $(x_0\ldots x_{2\Upsilon-1}0000)\notin  \M_{\Upsilon-1}(A^\infty_{\Upsilon-1})$, and we have, cf. \eqref{eq3_Ainf_rhoSrho_1x_100},
   $(x_0\ldots x_{2\Upsilon-1}000\x_{2\Gamma+1})=(x_0\ldots x_{2\Upsilon-1}0000)+\x_{2\Gamma+1}(0\ldots01)$, where\linebreak $(x_0\ldots x_{2\Upsilon-1}0000)$ is $(\Upsilon-1)$-covered by  $S_{\Upsilon-1}\setminus\{A^\infty_{\Upsilon-1}\}$ and
  $(0\ldots01)=A^\infty_\Gamma\in\pi_\Gamma$ is 2-covered by $C_\Gamma^*$, see Lemma \ref{lem3_Ainf=A0__pi_u}.
\end{proof}

By computer search for $q=5$ we have proved the following proposition.
\begin{proposition}\label{prop3_q5}
  Let $q=5$. Let $0\le\rho\le4$. Let $\Sigma_\rho=\PG(2\rho+1,5)$. Let the $(5\rho+1)$-set $S_\rho\subset\Sigma_\rho$ be as in~\eqref{eq3_constr}--\eqref{eq3_points}. Then $S_\rho$  is a minimal $\rho$-saturating set in $\Sigma_\rho$.
\end{proposition}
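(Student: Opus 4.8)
The statement is a finite assertion --- five instances $\rho=0,1,2,3,4$ --- and the natural plan is exhaustive computation, because the inductive machinery behind Theorem \ref{th3_main_geom} is unavailable at $q=5$. That induction rests on Lemma \ref{lem3_piu-A0,Ainf_pirho-A0}, whose hypothesis ``$q=4$ or $q\ge7$'' genuinely excludes $q=5$: for $q=5$ one has $(q-1)/2=2$, so the proof of that lemma guarantees only two bisecants of the conic $C_u$ through a point external to it, and deleting the two points $A_u^0,A_u^\infty$ can destroy both; consequently a point of $\pi_u\setminus\{A_u^0,A_u^\infty\}$ need not be $1$-covered by $C_u^*\cup\{T_u\}$, and the recursive covering argument collapses. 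I would therefore treat each $\rho$ separately. First I generate the point set $S_\rho\subset\PG(2\rho+1,5)$ of $(\rho+1)\cdot5+1$ points directly from the coordinate descriptions \eqref{eq3_constr}--\eqref{eq3_points}, storing each projective point by its canonical representative in $\F_5^{\,2\rho+2}$ (leftmost nonzero coordinate scaled to $1$).

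For the saturation property I use the reformulation in Definition \ref{def1_usual satur}(ii): a point $A\in\PG(2\rho+1,5)$ is $\rho$-covered by $S_\rho$ exactly when $A$ lies in the linear span $\langle T\rangle$ of some $(\rho+1)$-subset $T\subseteq S_\rho$. Since $|S_\rho|\ge\rho+1$, every span of a smaller subset is contained in such a $\langle T\rangle$, so it suffices to sweep over all $\binom{(\rho+1)\cdot5+1}{\rho+1}$ subsets of size $\rho+1$, enumerate the normalized nonzero $\F_5$-combinations of each, and mark the resulting canonical representatives in a single bit-array indexed over the $(5^{2\rho+2}-1)/4$ points of the space. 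If every entry ends up marked, $S_\rho$ is $\rho$-saturating in the covering sense. To confirm that $\rho$ is the \emph{smallest} such value (as Definition \ref{def1_usual satur} requires for $\rho\ge1$), I rerun the identical sweep with subsets of size $\rho$ and exhibit at least one unmarked point, proving $S_\rho$ is not $(\rho-1)$-saturating; for $\rho=0$ the set $S_0$ is literally all of $\PG(1,5)$, so this step is vacuous.

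Minimality (Definition \ref{def1_minim}) I reduce, using monotonicity of the covering property under adding points, to $\rho$-essentiality of every element: if for each $P\in S_\rho$ the deletion $S_\rho\setminus\{P\}$ fails to $\rho$-cover at least one point of the space, then no proper subset can be $\rho$-saturating and $S_\rho$ is minimal. Concretely I rerun the coverage sweep on the $((\rho+1)\cdot5)$-set $S_\rho\setminus\{P\}$ for each of the $(\rho+1)\cdot5+1$ choices of $P$ and record one witness uncovered point in every case.

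The one real obstacle is scale at $\rho=4$: the ambient space $\PG(9,5)$ has $(5^{10}-1)/4=2\,441\,406$ points, while $S_4$ has $26$ points, giving $\binom{26}{5}=65\,780$ candidate spanning $5$-subsets (each contributing at most $(5^5-1)/4=781$ points) for the saturation sweep, repeated $26$ times for minimality. This is comfortably tractable with a compact encoding of canonical representatives and a shared coverage array reset between sweeps, but it dominates the running time and demands care; the lower cases $\rho\le3$ are negligible by comparison. To guard against coding errors in the normalization map and the nonzero-coefficient enumeration, I would validate the entire program on the instances $q=4$ and $q=7$ with small $\rho$, where Theorem \ref{th3_main_geom} already proves minimal $\rho$-saturation and even pins down the essential witness sets $\M_\rho(A^\infty_\rho)$, and check that the program reproduces these proven outputs before trusting its verdict at $q=5$.
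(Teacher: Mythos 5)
Your proposal matches the paper's approach exactly: the paper's entire proof of this proposition is the single sentence ``By computer search for $q=5$ we have proved the following proposition,'' and your exhaustive sweep over $(\rho+1)$-subsets for saturation, size-$\rho$ subsets for the minimality of $\rho$ itself, and point-deletion runs for essentiality is a correct and sound realization of such a search (including the correct observation that the hypothesis $q=4$ or $q\ge7$ in Lemma~\ref{lem3_piu-A0,Ainf_pirho-A0} blocks the inductive proof at $q=5$). The only difference is that you supply the algorithmic details and validation strategy that the paper omits.
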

\subsection{Codes of covering radius $R$ and codimension $2R$}
In the coding theory language, the results of this section give the following theorem.
\begin{theorem}\label{th3_main_codes}
Let $\widehat{V}_\rho$ be the code such that the columns of its parity check matrix are the points (in homogeneous coordinates) of
the $\rho$-saturating $((\rho+1)q+1)$-set $S_\rho$ of Construction S by \eqref{eq3_constr}--\eqref{eq3_points}.

\textbf{\emph{(i)}}  Let $q=4$ or $q\ge7$. Then for all $R\ge1$, the code $\widehat{V}_\rho$ is an $[Rq+1,Rq+1-2R,3]_qR$ locally optimal code
of covering radius $R=\rho+1$.

\textbf{\emph{(ii)}} Let $q=5$.  Then for $1\le R\le5$, the code $\widehat{V}_\rho$ is a $[5R+1,5R+1-2R,3]_5R$ locally optimal code
of covering radius $R=\rho+1$.
\end{theorem}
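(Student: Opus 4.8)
The plan is to read the statement as a direct translation of the geometric results already established, the only genuinely new ingredient being the minimum distance. Recall the correspondence recorded in Section~\ref{sec_intr}: a parity-check matrix whose columns are the points (in homogeneous coordinates) of an $(R-1)$-saturating $n$-set in $\PG(r-1,q)$ defines an $[n,n-r]_qR$ code, and conversely, while a minimal saturating set corresponds to a locally optimal code. So first I would simply count the rows and columns of the matrix $\widehat{\Hh}_\rho$ in \eqref{eq3_points}: it has $2\rho+2=2R$ rows and $(\rho+1)q+1=Rq+1$ columns, where $R=\rho+1$. Since $S_\rho$ is a $\rho$-saturating (that is, $(R-1)$-saturating) set, the code $\widehat{V}_\rho$ has covering radius exactly $\rho+1=R$; for $q=4$ or $q\ge7$ this is guaranteed by Theorem~\ref{th3_main_geom}(i), and for $q=5$, $0\le\rho\le4$, by Proposition~\ref{prop3_q5}.

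Next I would pin down the codimension. By Definition~\ref{def1_usual satur}(ii) a $\rho$-saturating set spans $\PG(2\rho+1,q)$, so the columns of $\widehat{\Hh}_\rho$ span $\F_q^{2R}$; hence the matrix has full row rank $2R$, and the code has codimension $r=2R$ and dimension $Rq+1-2R$. This already delivers the parameters $[Rq+1,Rq+1-2R]_qR$.

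The one part requiring an independent (though short) argument is that the minimum distance equals $3$, i.e. that the smallest number of linearly dependent columns of $\widehat{\Hh}_\rho$ is $3$. For $d\ge3$ I would check that all columns are nonzero and pairwise non-proportional, equivalently that the points listed in \eqref{eq3_constr}--\eqref{eq3_points} are distinct points of $\PG(2\rho+1,q)$: each has leftmost coordinate $1$ and so is nonzero; within a fixed block the conic parametrisation $a\mapsto(1,a,a^2)$ is injective, the nuclei $T_u$ and the endpoints $A_0^0,A_\rho^\infty$ differ from the conic points, and points from different blocks have disjoint coordinate supports, so no two coincide. For $d\le3$ I would exhibit three linearly dependent columns: the point $A_0^0$ together with the points of $L_0^*$ all lie in a $2$-dimensional coordinate subspace of $\F_q^{2R}$ (the span $\Sigma_0=\PG(1,q)$), and since $q\ge4$ at least three of them are present; three vectors in a $2$-dimensional space are linearly dependent, so $d\le3$. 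Combining the two bounds gives $d=3$.

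Finally, local optimality follows for free: Theorem~\ref{th3_main_geom}(i) (resp. Proposition~\ref{prop3_q5}) asserts that $S_\rho$ is a \emph{minimal} $\rho$-saturating set, and by the correspondence of Section~\ref{sec_intr} a minimal $(R-1)$-saturating set corresponds precisely to a locally optimal code in the sense of Definition~\ref{def1_LO}. The argument for part (ii) is identical, merely invoking Proposition~\ref{prop3_q5} in place of Theorem~\ref{th3_main_geom}(i) and restricting to $0\le\rho\le4$. The only real work lies in the bookkeeping of the minimum-distance step; no geometric input beyond Theorem~\ref{th3_main_geom} and Proposition~\ref{prop3_q5} is needed.
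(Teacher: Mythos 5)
Your proposal is correct and follows essentially the same route as the paper: the paper's proof likewise invokes Theorem~\ref{th3_main_geom} and Proposition~\ref{prop3_q5} for the covering radius, derives local optimality from minimality of $S_\rho$ via the saturating-set/covering-code correspondence, and attributes $d=3$ to $L_0^*$. You simply spell out the details that the paper leaves implicit (full row rank for the codimension, distinctness of the points for $d\ge3$, and three dependent columns on $L_0$ for $d\le3$).
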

\begin{proof}
 We use Theorem \ref{th3_main_geom} and Proposition \ref{prop3_q5}. The code $\widehat{V}_\rho$ is locally optimal as the corresponding $\rho$-saturating set $S_\rho$ is minimal. Distance $d=3$ is due to $L^*_0$.
 \end{proof}
\begin{conjecture} Let $q=5$. Let $\widehat{V}_\rho$ be as in Theorem \ref{th3_main_codes}. Then for all $R\ge1$, the code $\widehat{V}_\rho$ is a $[5R+1,5R+1-2R,3]_5R$ locally optimal code
with radius $R=\rho+1$.
\end{conjecture}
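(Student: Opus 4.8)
The plan is to run the same induction on $\Upsilon$ as in the proof of Theorem~\ref{th3_main_geom}, but to take Proposition~\ref{prop3_q5} (which already settles $0\le\rho\le4$ for $q=5$ by computer) as an \emph{extended base}, and then to repair the one place where the general-$q$ argument breaks down at $q=5$. That place is Lemma~\ref{lem3_piu-A0,Ainf_pirho-A0}: for odd $q$ an external point of the conic $C_u$ lies on only $(q-1)/2$ bisecants, so at $q=5$ it lies on exactly two, and deleting the two poles $A^0_u,A^\infty_u$ can destroy both. First I would pin down, by a direct check in $\pi_u$ with local conic $x_{2u-1}x_{2u+1}=x_{2u}^2$, the resulting set of \emph{bad} points, i.e. the points of $\pi_u\setminus\{A^0_u,A^\infty_u\}$ that are not $1$-covered by $C_u^*\cup\{T_u\}$. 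A short computation shows there are exactly four of them per plane, namely, in the local coordinates $(x_{2u-1}x_{2u}x_{2u+1})$, the points $(112)$, $(142)$, $(123)$, $(133)$.

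The crucial structural fact I would establish next is that each bad point $P$ has exactly two bisecants to the full conic $C_u$, one through $A^0_u$ and one through $A^\infty_u$, the second endpoint lying in $C_u^*$ in both cases. Hence $P$ is $1$-covered by $C_u^*\cup\{A^\infty_u\}$ and, independently, by $C_u^*\cup\{A^0_u\}$. These are two coverings unavailable inside $\pi_u$ alone but cheap in the ambient $\Sigma_\rho$: the point $A^\infty_\rho$ lies in $S_\rho$ directly, while for $u<\rho$ the pole $A^0_u=A^\infty_{u-1}$ is $2$-covered by $C_{u-1}^*$ via Lemma~\ref{lem3_Ainf=A0__pi_u} (which holds for all $q\ge4$, and in particular at $q=5$). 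Thus a bad point in the top plane costs two columns (through $A^\infty_\rho$), while a bad point in an intermediate plane costs three columns (the $A^0_u$-route through the preceding oval), the latter being admissible as soon as $\rho\ge2$.

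With these two routes in hand I would reproduce the inductive step of Theorem~\ref{th3_main_geom} almost verbatim. In part~(i) a bad point can occur only as the $\pi_\Gamma$-component in \eqref{eq3_Bin_rho}; there I would replace ``$1$-covered by $C_\Gamma^*$'' by ``$1$-covered by $C_\Gamma^*\cup\{A^\infty_\Gamma\}$'', which keeps the total at $(\Upsilon-1+1)+(1+1)=\Gamma+1$ columns. In the decompositions \eqref{eq3_Ainf_rhoSrho}--\eqref{eq3_Ainf_rhoSrho_1x_100} the $\pi_\Gamma$-components are either conic points of $C_\Gamma^*$, the single generators $A^\infty_\Gamma$ or $T_\Gamma$, or points whose first local coordinate vanishes, so none of them is bad. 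For part~(iii), where $A^\infty_\Gamma$ is forbidden as a generator, the $\pi_\Gamma$-components that arise have middle local coordinate $0$ (they are of the form $(0\ldots010\x_{2\Gamma+1})$) and hence lie outside the bad set, while any bad point of $\pi_\Gamma$ that must be covered on its own is handled by the three-column $A^0_\Gamma$-route, comfortably within the looser budget $\Gamma+2$.

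The delicate part, and the one I expect to be the main obstacle, is the essential-set bookkeeping of part~(ii): I must verify that the four bad points never enter $\widetilde{\M_\Gamma}(A^\infty_\Gamma)$, so that the inclusion \eqref{eq3_MAinfSrho} survives for $q=5$. This is exactly where the two-route observation pays off. A bad point of $\pi_\Gamma$ is $\rho$-covered \emph{without} $A^\infty_\Gamma$ by its three-column $A^0_\Gamma$-route, so it is not $\rho$-essential for $A^\infty_\Gamma$; and since a bad point has nonzero middle local coordinate it fails the condition $(x_{2\Gamma}x_{2\Gamma+1})=(0\x_{2\Gamma+1})$ defining the claimed superset $\M_\Gamma(A^\infty_\Gamma)$, so it lies outside it in any case. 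Once this is checked, the induction closes: I would start it at $\Upsilon=4$, so that $\Gamma\ge5\ge2$ (making the three-column route legal) and so that the inductive hypotheses at levels $\Upsilon$ and $\Upsilon-1$ fall within the computer-verified range of Proposition~\ref{prop3_q5}. The assertions~(i)--(iii) then propagate to every $\rho$, and the coding-theoretic reformulation of Theorem~\ref{th3_main_codes} yields the locally optimal $[5R+1,5R+1-2R,3]_5R$ codes for all $R\ge1$.
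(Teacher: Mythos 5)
Your computation of the four uncovered (``bad'') points of $\pi_u$ at $q=5$ and of their two bisecants through $A^0_u$ and $A^\infty_u$ is correct, but the proposal founders exactly at the point you call delicate, and in a way that is not reparable within your framework. The dangerous points are not the four bad points themselves but the points $B$ of case 1), \eqref{eq3_Bin_rho}, whose $\pi_\Gamma$-component is bad: such a $B$ has its prefix in $\M_\Upsilon(A^\infty_\Upsilon)$ and therefore lies \emph{outside} $\M_\Gamma(A^\infty_\Gamma)$, so assertion (ii) --- which is precisely the hypothesis consumed by case 2) at the next level --- requires $B$ to be $\Gamma$-covered by $S_\Gamma\setminus\{A^\infty_\Gamma\}$; your repair instead covers $B$ through $A^\infty_\Gamma$. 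This is not a fixable bookkeeping detail, because assertion (ii) is actually \emph{false} for $q=5$. Take $\Gamma=3$, $\Upsilon=2$ and $B=(1,0,1,1,0,1,1,2)\in\Sigma_3=\PG(7,5)$, whose $\pi_3$-component $(1,1,2)$ is bad. Its prefix $(1,0,1,1,0,1)$ lies in $\M_2(A^\infty_2)$ (since $(1,0,1,1)\notin\M_1(A^\infty_1)$ and $(x_4x_5)=(0,1)$), hence $B\notin\M_3(A^\infty_3)$. Yet $B$ is not $3$-covered by $S_3\setminus\{A^\infty_3\}$: any representation splits into a part from $C_3^*\cup\{T_3\}$ (the only points of $S_3\setminus\{A^\infty_3\}$ with support on $x_6,x_7$), which must equal $(s,1,2)$ on the coordinates $(x_5,x_6,x_7)$, plus a residue $(1,0,1,1,0,1-s,0,0)$ to be covered by $S_2\setminus\{A^\infty_2\}$. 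Because $(1,1,2)$ is bad, a one-point first part forces $s=3$, a two-point part forces $s\in\{0,2,4\}$, and $s=1$ needs three points; on the other side, a residue with $1-s\ne 0$ needs at least two points of $C_2^*\cup\{T_2\}$ (no single point there has $(x_4,x_5)$ of the form $(0,\x_5)$) plus at least two points of $\{A_0^0\}\cup L_0^*\cup C_1^*\cup\{T_1\}$ (since $(1,0,1,w)$ is proportional to no point of $S_1$), while the residue for $s=1$ costs two. The minima over the three cases are $1{+}4=5$, $2{+}4=6$, $3{+}2=5$, so five columns are always needed. Thus $\widetilde{\M_3}(A^\infty_3)\not\subset\M_3(A^\infty_3)$: the inductive invariant \eqref{eq3_MAinfSrho} on which your argument (like the paper's proof of Theorem \ref{th3_main_geom}) runs simply does not hold at $q=5$, $\rho=3$.

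Two further problems compound this. First, a counting error: by the paper's definition of $\rho$-covered, ``$2$-covered by $C^*_{u-1}$'' in Lemma \ref{lem3_Ainf=A0__pi_u} means a combination of \emph{three} points (the point $A^0_u=A^\infty_{u-1}$, with local coordinates $(001)$, lies on no bisecant of $C_{u-1}^*$), so your ``three-column $A^0_u$-route'' in fact costs $3+1=4$ columns; this is why the case-1) budget $\Gamma+1$ cannot absorb it. Second, the base case: Proposition \ref{prop3_q5} was verified by computer only for assertion (i) of Theorem \ref{th3_main_geom} (minimal saturation); it asserts nothing about (ii) and (iii), which are exactly the hypotheses your induction needs at levels $\Upsilon$ and $\Upsilon-1$ --- and (ii) is false at level $3$, so starting the induction at $\Upsilon=4$ does not help. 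This structural failure is presumably why the authors state the $q=5$ case only as a conjecture: the saturation claim (i) may well remain true (the point $B$ above \emph{is} $3$-covered once $A^\infty_3$ is allowed, and at the next level points lying over $B$ can be rescued by shifting part of the coordinate $x_7$ into the $\pi_4$-component), but proving it requires a genuinely weaker inductive invariant than \eqref{eq3_MAinfSrho}, which your proposal does not supply.
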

\section{The $q^m$-concatenating constructions for code codimension lifting}\label{sec4_qm_concat}
The $q^{m}$-concatenating constructions are proposed in
\cite{DavPPI} and are developed in
\cite{DavParis,Dav95,DavCovRad2,DFMP-IEEE-LO,DavOst-IEEE2001,DGMP-AMC,DavOst-DESI2010},  see also
\cite {Handbook-coverings}, \cite[Sec.\thinspace 5.4]{CHLS-bookCovCod}.
By using a starting code as a ``seed'', a
$q^{m}$-concatenating construction yields an infinite family of
new codes
with a fixed covering radius, increasing codimension, and with  almost the same covering density.

We give versions of the $q^m$-concatenating constructions convenient for our goals. Several other versions of such constructions can be found in
\cite{DavPPI,DavParis,Dav95,DavCovRad2,DFMP-IEEE-LO,DavOst-IEEE2001,DGMP-AMC,DavOst-DESI2010}.

\noindent \textbf{Construction QM$_\mathbf{1}$.} Let columns $\h_{j}$ belong to $\F_{q}^{r_{0}}$ and let
$\Hh_{0}=[\mathbf{h}_{1} \h_{2}\mathbf{\ldots
h}_{n_{0}}]$ be a
parity check matrix of an $[n_{0},n_{0}-r_{0}]_{q}R,R$
\emph{starting} surface-covering code $V_{0}$ with $R\ge2$.  Let $m\geq 1$ be an integer
such that $q^m\ge n_0-1$. To each column
$\h_{j}$ we associate an element $\beta_{j}\in
\F_{q^{m}}\cup\{\ast\}$ so that $\beta _{i}\neq \beta _{j}$
if $i\ne j$.  Let a new code $V$ be
the $ [n,n-(r_{0}+Rm)]_{q}R_{V},\ell_V$ code with $n=q^{m}n_{0}$
and parity check matrix $\Hh_{V}$ of the  form
\begin{align}
&\Hh_{V} =\left[\B_{1}~\B_{2}~\ldots ~
\B_{n_{0}}\right] ,  \label{eq4_QM-H}\displaybreak[3]\\
&\B_{j} =\left[ \renewcommand{\arraystretch}{1.1}
\begin{array}{@{}cc@{}c@{}c@{}}
\h_{j} & \h_{j} & \mathbf{\cdots } & \h_{j} \\
\xi _{1} & \xi _{2} & \cdots & \xi _{q^{m}} \\
\beta _{j}\xi _{1} & \beta _{j}\xi _{2} & \cdots & \beta _{j}\xi _{q^{m}} \\
\beta _{j}^{2}\xi _{1} & \beta _{j}^{2}\xi _{2} & \cdots & \beta _{j}^{2}\xi
_{q^{m}} \\
\vdots & \vdots & \vdots & \vdots \\
\beta _{j}^{R-1}\xi _{1} & \beta _{j}^{R-1}\xi _{2} & \cdots & \beta
_{j}^{R-1}\xi _{q^{m}}
\end{array}
\right] \text{if }\beta_{j}\in
\F_{q^{m}},~~\mathbf{B}_{j}=\left[
\begin{array}{@{}cc@{}c@{}c@{}}
\mathbf{h}_{j} & \mathbf{h}_{j} & \mathbf{\cdots } & \mathbf{h}_{j} \\
0 & 0 & \cdots & 0 \\
\vdots & \vdots & \vdots & \vdots \\
0 & 0 & \cdots & 0 \\
\xi_{1} & \xi_{2} & \cdots & \xi_{q^{m}}
\end{array}
\right] \text{if }\beta_{j}=\ast ,  \label{eq4_Bj_QM_*}
\end{align}
where $\B_{j}$ is an $(r_{0}+Rm)\times q^m$ matrix, $0$ is the zero element of $\F_{q^{m}}$, $\xi_u$ is an element of $\F_{q^{m}}$, $\{\xi _{1},\xi _{2},\ldots ,\xi _{q^{m}}\}=\F_{q^{m}}$. An
element of $\F_{q^{m}}$ written in $\B_{j}$ denotes an
$m$-dimensional $q$-ary column vector that is a $q$-ary representation of this
element.

We denote $\bb_j(\xi_u)=(\h_j,\xi_u,\beta_j\xi_u,\beta_j^2\xi_u,\ldots,\beta_j^{R-1}\xi_u)$ the $u$-th column of $\B_j$ with $\beta_{j}\in
\F_{q^{m}}$. If $\beta _{j}=\ast$, we have $\bb_j(\xi_u)=(\h_j,0,\ldots,0,\xi_u)$.
\begin{theorem}\label{th4_QM1}
 In Construction QM$_1$, the new code $V$ with the parity check matrix \eqref{eq4_QM-H}, \eqref{eq4_Bj_QM_*} is an
$[n,n-(r_{0}+Rm),3]_{q}R,R$ surface-covering code with radius $R$ and length $n=q^{m}n_{0}$. If the starting code $V_0$ is locally optimal, then $V$  is locally optimal too.
\end{theorem}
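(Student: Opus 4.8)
The plan is to verify the three claimed properties of the new code $V$ separately: covering radius exactly $R$, minimum distance $3$, and the surface-covering (i.e. $(R,R)$-object) property, and finally to transfer local optimality from $V_0$ to $V$. The overall strategy rests on Definition \ref{def1_R,l code}(ii): I must show that every column $c\in\F_q^{\,r_0+Rm}$ is a linear combination with nonzero coefficients of at least $R$ and at most $R$ distinct columns of $\Hh_V$, that is, of \emph{exactly} $R$ columns. Since $V_0$ is a surface-covering $[n_0,n_0-r_0]_qR,R$ code, its parity check matrix $\Hh_0$ already has the property that every column of $\F_q^{\,r_0}$ (including zero) is a nonzero-coefficient combination of exactly $R$ columns $\h_{j_1},\ldots,\h_{j_R}$. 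The idea of the $q^m$-concatenating construction is to ``lift'' such a representation: given a target column $c=(\h,\gamma_1,\ldots,\gamma_{Rm})$ with $\h\in\F_q^{\,r_0}$ and the remaining $Rm$ coordinates interpreted as $R$ elements $\eta_1,\ldots,\eta_R\in\F_{q^m}$, I would start from a representation $\h=\sum_{i=1}^R \lambda_i\h_{j_i}$ in the seed code and then solve for column indices $u_i$ (equivalently field elements $\xi_{u_i}$) and for a rescaling so that the $\F_{q^m}$-blocks $(\xi,\beta_j\xi,\ldots,\beta_j^{R-1}\xi)$ reproduce $(\eta_1,\ldots,\eta_R)$.

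First I would set up the coordinate bookkeeping: write the lower $Rm$ coordinates of any codeword-target as an $R$-tuple $(\eta_1,\ldots,\eta_R)\in\F_{q^m}^R$, and observe that choosing one column $\bb_{j}(\xi)$ with $\beta_j\in\F_{q^m}$ contributes $\xi\cdot(1,\beta_j,\beta_j^2,\ldots,\beta_j^{R-1})$ to this tuple (a scalar multiple of a Vandermonde-type vector), while a $\ast$-column contributes $\xi\cdot(0,\ldots,0,1)$. The key algebraic fact I would invoke is that $R$ distinct values $\beta_{j_1},\ldots,\beta_{j_R}\in\F_{q^m}$ give $R$ linearly independent Vandermonde vectors over $\F_{q^m}$, so that for \emph{generic} target tuples the coefficients $\xi_{u_i}$ are uniquely determined and, crucially, can be arranged to be nonzero; the $\ast$-column is reserved to handle the degenerate tuples (those lying in the span of fewer Vandermonde vectors, e.g. when some $\eta_i=0$ forces a zero coefficient). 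This is exactly the role of the $\beta_j=\ast$ column in \eqref{eq4_Bj_QM_*}, and matching cases to it is the technical heart of the argument.

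The main obstacle I anticipate is the \emph{exactness} of the count — proving that every target is a combination of exactly $R$ columns, with all coefficients nonzero, and that no target requires more than $R$. The upper bound $R$ on the covering radius and the surface-covering lower bound $\ell=R$ must be established simultaneously, because I cannot allow a representation to collapse to fewer than $R$ columns (that would destroy the $(R,R)$-object property) nor to spill over to $R+1$ (that would raise the covering radius). The delicate point is that when the $\F_{q^m}$-part of the target is such that the naive solution yields a zero coefficient $\xi_{u_i}=0$, I must re-route that contribution, typically by enlisting the $\ast$-column and possibly shifting the seed representation of $\h$ to an alternative exactly-$R$-term representation (which exists because $V_0$ is surface-covering, so even the column $\h$ itself has a weight-in-$[R,R]$ representation in \emph{every} coset). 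I would organize the proof as a case analysis on how many of the $\eta_i$ force degeneracy, verifying in each case that exactly $R$ nonzero-coefficient columns suffice, and that the distinctness requirement $\beta_i\ne\beta_j$ guarantees the chosen columns are distinct.

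Finally, for minimum distance I would show $d=3$: any two columns of $\Hh_V$ are linearly independent (no coordinate pattern allows one to be a scalar multiple of another, since the $\h_j$ and the $\beta_j$-labels separate the blocks), while three dependent columns exist (inherited from a weight-$3$ word, giving the stated $d=3$). For local optimality, I would use the correspondence between locally optimal codes and minimal saturating sets together with the structure of the lift: removing any column $\bb_j(\xi_u)$ from $\Hh_V$ should leave some target uncovered within radius $R$, which I would trace back to the assumed local optimality of $V_0$ via the blockwise structure of \eqref{eq4_QM-H}. I expect the distance and local-optimality parts to be short once the covering/surface-covering analysis is in place; the genuine work is the exact-$R$ representation argument described above.
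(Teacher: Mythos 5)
Your overall frame (lift an exact-$R$ seed representation through a Vandermonde-type system) is indeed the paper's approach, but your proposal rests on a genuine misunderstanding of the construction, and it derails what you yourself call the ``technical heart'' of the argument. In Construction QM$_1$ the unknowns solved from the Vandermonde system are \emph{not} the coefficients of the linear combination; they are column \emph{selectors}. The representation actually produced is $\tT=\sum_{k=1}^{R}c_k\bb_{j_k}(x_k)$, as in \eqref{eq4_proof_t}: the coefficients $c_k\in\F_q^*$ are inherited unchanged from the seed representation $\f=\sum_{k=1}^{R}c_k\h_{j_k}$ (they exist, are nonzero, and use $R$ distinct columns precisely because $V_0$ is an $(R,R)$-object), while each $x_k\in\F_{q^m}$ merely decides \emph{which} of the $q^m$ columns of the block $\B_{j_k}$ is used. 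A solved value $x_k=0$ is completely harmless: $\bb_{j_k}(0)=(\h_{j_k},0,\ldots,0)$ is a legitimate column of $\Hh_V$, it enters with the nonzero coefficient $c_k$, and the $R$ chosen columns are automatically distinct because they lie in $R$ distinct blocks. Hence there are no ``degenerate tuples'', nothing needs to be ``arranged to be nonzero'', and the exact-$R$ nonzero-coefficient representation (covering radius at most $R$ and surface-covering simultaneously) exists uniformly — which is exactly why the paper's proof is short.

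Worse, your proposed repair for this nonexistent degeneracy would not work if you tried to carry it out. The $\ast$-labelled block is not a free resource that can be ``enlisted'': it is one specific block attached to one specific column $\h_j$ of $\Hh_0$ (the label $\ast$ exists only so that as many as $q^m+1$ blocks can receive distinct labels from $\F_{q^m}\cup\{\ast\}$, accommodating $q^m\ge n_0-1$). Including one of its columns adds $\h_j$ to the top $r_0$ coordinates and destroys the seed identity unless $\h_j$ was already among the $\h_{j_k}$; the paper treats the $\ast$-block only in that situation, via the modified triangular system \eqref{eq4_system_*}. Likewise, ``shifting to an alternative exactly-$R$ seed representation'' is neither available in general nor needed. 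Two smaller points: the dependent triple giving $d=3$ lives inside a single block, via $a+b+c=0$ and $\xi_{u_3}=(-a\xi_{u_1}-b\xi_{u_2})/c$ as in \eqref{eq4_3word} — it is not ``inherited from a weight-$3$ word'' of $V_0$, whose minimum distance is irrelevant; and local optimality is obtained by taking a column $\h_{j}$ essential in $V_0$ and choosing $\s$ so that the system forces $x_k=\xi_u$, so that removing $\bb_{j}(\xi_u)$ leaves some $\tT$ uncovered. Once you replace your case analysis by the observation that $x_k=0$ simply selects the column $\bb_{j_k}(0)$, the rest of your outline collapses onto the paper's proof.
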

\begin{proof}
 The minimum distance d is equal to 3
 since for any pair of columns $\bb_j(\xi_{u_1})$, $\bb_j(\xi_{u_2})$ of $\B_j$, a 3-rd one can be found such that the column triple corresponds to a codeword of weight~3. Take $a,b,c\in\F_q^*$ with $a+b+c=0$.  Put $\xi_{u_3}=(-a\xi_{u_1}-b\xi_{u_2})/c$. Let $\mathbf{0}$ be the zero $(r_0+Rm)$-positional column. Then for all $j$ we have
\begin{align}\label{eq4_3word}
 a\bb_j(\xi_{u_1})+b\bb_j(\xi_{u_2})+c\bb_j(\xi_{u_3})=\mathbf{0}.
\end{align}

The length of the code $V$ directly follows from the construction.

We show that covering radius $R_V$ of $V$ is equal to $R$.

Consider an arbitrary  column $\tT=(\f\s)\in\F_q^{\,r_0+Rm}$ with $\f\in\F_q^{\,r_0}$, $\s\in\F_q^{Rm}$,\\ $\s=(s_1,s_2,\ldots, s_{Rm})$, $s_i\in\F_q$. We partition $\s$ by $m$-vectors so that $\s=(S_0,S_1,\ldots,S_{R-1})$, $S_v=(s_{vm+1},s_{vm+2},\ldots,s_{vm+m})$, $v=0,1,\ldots,R-1$. We treat $S_v$ as an element of $\F_{q^m}$.

Since $V_{0}$ is an $[n_{0},n_{0}-r_{0}]_{q}R,R$ code, there exists a linear combination of the form
\begin{align}\label{eq4_proof_f}
 \f=\sum_{k=1}^R c_k\h_{j_k},~ c_k\in\F^*_q\text{ for all }k,
\end{align}
 see Definition \ref{def1_R,l code}. Now we can represent $\tT$ as a linear combination (with nonzero
coefficients) of $R$ distinct columns of $\Hh_V$. We have, see \eqref{eq4_Bj_QM_*},
\begin{align}\label{eq4_proof_t}
 \tT=\sum_{k=1}^R c_k\bb_{j_k}(x_k),~ c_k\in\F^*_q\text{ and }x_k\in\F_{q^m}\text{ for all }k,
\end{align}
where  values of $x_k$ are obtained from the linear system with nonzero determinant. If for $j_k$ in~\eqref{eq4_proof_f} we have $\beta_{j_k}\in
\F_{q^{m}}$ for all $k$, then the system has the form
\begin{align}\label{eq4_system_Fqm}
 \sum_{k=1}^R c_k\beta_{j_k}^{v}x_k=S_{v},~v=0,1,\ldots,R-1.
\end{align}
We put $0^0=1$.
If in \eqref{eq4_proof_f} we have, for example, $\beta_{j_R}=\ast$, then the system is as follows:
\begin{align}\label{eq4_system_*}
 \sum_{k=1}^{R-1} c_k\beta_{j_k}^{v}x_k=S_{v},~v=0,1,\ldots,R-2;\quad \sum_{k=1}^{R-1} c_k\beta_{j_k}^{R-1}x_k+c_Rx_R=S_{R-1}.
\end{align}

If $V_0$ is a locally optimal code, then every column $\h_j$ of $\Hh_0$ takes part in a representation of the form \eqref{eq4_proof_f}. If we remove  $\bb_{j_k}(\xi_u)$ from $\B_{j_k}$ then there is $(s_1,s_2,\ldots, s_{Rm})$ such that the system \eqref{eq4_system_Fqm} or \eqref{eq4_system_*} gives $x_k=\xi_u$; for some $\tT$ the representation~\eqref{eq4_proof_t} becomes impossible. So, all columns of $\Hh_V$ are essential and  $V$ is locally optimal.
 \end{proof}

\noindent \looseness=-1 \textbf{Construction QM$_\mathbf{2}$.} Let columns $\h_{j}$ belong to $\F_{q}^{\,r_{0}}$ and let
$\Hh_{0}=[\mathbf{h}_{1} \h_{2}\mathbf{\ldots
h}_{n_{0}}]$ be a
parity check matrix of an $[n_{0},n_{0}-r_{0}]_{q}R,\ell_0$
\emph{starting} code $V_{0}$ with $\ell_0=R-1$, $R\ge2$.  Let $m\geq 1$ be an integer
such that $q^m\ge n_0$. Let $\theta _{m,q}=\frac{q^{m+1}-1}{q-1}$. To each column
$\h_{j}$ we associate an element $\,\beta _{j}$ $\in
\F_{q^{m}}$ so that $\,\beta _{i}\neq \beta _{j}$
if $i\ne j$.  Let a new code $V$ be
the $ [n,n-(r_{0}+Rm)]_{q}R_{V},\ell_V$ code with $n=q^{m}n_{0}+\theta _{m,q}$
and parity check matrix $\Hh_{V}$ of the  form
\begin{align}
&\Hh_{V} =\left[\mathbf{C}~\B_{1}~\B_{2}~\ldots ~
\B_{n_{0}}\right] ,\quad \mathbf{ C=}\left[
\begin{array}{@{}c@{}}
\mathbf{0}_{r_{0}+(R-1)m} \\
\mathbf{W}_{m}
\end{array}
\right],  \label{eq4_QM-H_R-1}
\end{align}
where $\B_{j}$ is an $(r_{0}+Rm)\times q^m$ matrix as in \eqref{eq4_Bj_QM_*}, $\mathbf{C}$ is an $(r_{0}+Rm)\times \theta_{m,q}$ matrix,
$\mathbf{0}_{r_{0}+(R-1)m}$ is the zero $(r_{0}+(R-1)m)\times \theta_{m,q}$ matrix, $\mathbf{W}_{m}$ is a parity check $m\times\theta_{m,q}$ matrix of the $[\theta_{m,q},\theta_{m,q}-m,3]_{q}1$ Hamming code.

\begin{theorem}\label{th4_QM2}
In Construction QM$_2$, the new code $V$ with the parity check matrix \eqref{eq4_QM-H_R-1}, \eqref{eq4_Bj_QM_*} is an
$[n,n-(r_{0}+Rm),3]_{q}R,R$ surface-covering code with covering radius $R$ and length $n=q^{m}n_{0}+\frac{q^{m+1}-1}{q-1}$. Moreover,
if the starting code $V_0$ is locally optimal, then the new code $V$  is locally optimal too.
\end{theorem}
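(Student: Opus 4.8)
\emph{Proof proposal.} The plan is to run the proof of Theorem~\ref{th4_QM1} almost verbatim, the only new feature being that $V_{0}$ is now merely an $(R,R-1)$-object, so in the worst case a syndrome will be covered by $R-1$ block columns and the extra block $\mathbf{C}$ must supply the missing $R$-th column. First I would record the easy parts. The length $n=q^{m}n_{0}+\theta_{m,q}$ is just the column count of $[\mathbf{C}~\B_{1}\ldots\B_{n_{0}}]$, and the hypothesis $q^{m}\ge n_{0}$ is exactly what lets us attach pairwise distinct $\beta_{j}\in\F_{q^{m}}$ to the $n_{0}$ columns of $\Hh_{0}$. For the minimum distance I would argue as in Theorem~\ref{th4_QM1}: inside each block the relation \eqref{eq4_3word} yields weight-$3$ words, so $d\le 3$, while $d\ge 3$ holds because the columns of $\Hh_{V}$ are pairwise linearly independent (distinct $\beta_{j}$ separate block columns, the $\h$-part separates block columns from the columns of $\mathbf{C}$, and the columns of $\W_{m}$ are distinct projective points).

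Next I would show that $V$ is an $(R,R)$-object. Given $\tT=(\f,\s)$ I would split $\s=(S_{0},\ldots,S_{R-1})$ into $R$ elements of $\F_{q^{m}}$ and use Definition~\ref{def1_R,l code}(ii) to write $\f=\sum_{k}c_{k}\h_{j_{k}}$ with nonzero $c_{k}$ and either $R$ or $R-1$ distinct columns. When $\f$ uses $R$ columns the argument of Theorem~\ref{th4_QM1} applies unchanged: the nonsingular Vandermonde system \eqref{eq4_system_Fqm} produces $x_{1},\ldots,x_{R}$ and $\tT=\sum_{k=1}^{R}c_{k}\bb_{j_{k}}(x_{k})$ uses exactly $R$ distinct columns. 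When $\f$ uses $R-1$ columns I would match $\f$ together with $S_{0},\ldots,S_{R-2}$ through the nonsingular $(R-1)\times(R-1)$ system $\sum_{k=1}^{R-1}c_{k}\beta_{j_{k}}^{v}x_{k}=S_{v}$, $v=0,\ldots,R-2$, which fixes $x_{1},\ldots,x_{R-1}$ and hence the forced value $T=\sum_{k=1}^{R-1}c_{k}\beta_{j_{k}}^{R-1}x_{k}$ in the last coordinate. Here $\mathbf{C}$ enters: since $\W_{m}$ is a Hamming parity-check matrix it has covering radius $1$, so if $S_{R-1}-T\neq 0$ there is a unique column $\w$ of $\W_{m}$ and a scalar $c_{R}\in\F_{q}^{*}$ with $c_{R}\w=S_{R-1}-T$, and then $\tT=\sum_{k=1}^{R-1}c_{k}\bb_{j_{k}}(x_{k})+c_{R}\cc$, with $\cc$ the corresponding column of $\mathbf{C}$, is a combination of exactly $R$ distinct columns.

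The hard part will be the degenerate subcase $S_{R-1}-T=0$, in which $\tT$ is already realized by $R-1$ block columns and no nonzero multiple of a $\mathbf{C}$-column can be appended (such a column contributes only $c_{R}\w$ to the last block, and $c_{R}\w=0$ forces $c_{R}=0$). To restore a representation with exactly $R$ nonzero columns I would split one block column inside $\B_{j_{1}}$: for $q\ge 3$ choose $a\in\F_{q}^{*}\setminus\{c_{1}\}$, put $b=c_{1}-a\neq 0$, pick $\xi'\neq x_{1}$ and set $\xi''=(c_{1}x_{1}-a\xi')/b$; then $a\bb_{j_{1}}(\xi')+b\bb_{j_{1}}(\xi'')=c_{1}\bb_{j_{1}}(x_{1})$ with $\xi'\neq\xi''$, so $\tT$ again becomes a combination of exactly $R$ distinct columns (the case $q=2$ needs a separate small argument). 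This proves $V$ is surface-covering; that its radius is exactly $R$ and not smaller follows by taking a syndrome $(\f_{0},\mathbf{0})$ with $\f_{0}$ requiring $R$ distinct columns of $\Hh_{0}$ and noting that, since $\mathbf{C}$-columns and net cancellations inside a block contribute nothing to the first $r_{0}$ coordinates, any covering of $(\f_{0},\mathbf{0})$ needs at least $R$ columns.

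Finally, for local optimality I would show every column of $\Hh_{V}$ essential when $V_{0}$ is. For the block columns this is the essentiality argument of Theorem~\ref{th4_QM1}: each $\h_{j_{k}}$ occurs in a representation \eqref{eq4_proof_f}, and deleting $\bb_{j_{k}}(\xi_{u})$ leaves a syndrome whose only candidate representation is forced through $x_{k}=\xi_{u}$ and so becomes impossible. For the columns of $\mathbf{C}$ I would invoke the minimality of $\W_{m}$ for covering radius $1$: deleting a column $\w_{0}$ makes its nonzero multiples uncoverable by a single $\W_{m}$-column, and lifting this to a syndrome whose first coordinates force an $(R-1)$-block pattern with residual exactly $\w_{0}$ shows $\w_{0}$ is essential in $V$. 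I expect the two most delicate points to be precisely this bookkeeping for the $\mathbf{C}$-columns and the $q=2$ exception in the splitting step.
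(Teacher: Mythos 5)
Your proposal is correct and follows essentially the same route as the paper's proof: the same case split on $\varphi(\f)\in\{R,R-1\}$ via Definition~\ref{def1_R,l code}, the same use of the Hamming matrix's covering radius $1$ to supply the column of $\mathbf{C}$ when the residual $S_{R-1}-T$ is nonzero, and---crucially---the same column-splitting trick in the degenerate case $S_{R-1}-T=0$ (the paper realizes it by invoking the weight-3 relation \eqref{eq4_3word} with $j=j_1$, $\xi_{u_1}=x_1$, $a=-c_1$, which replaces $c_1\bb_{j_1}(x_1)$ by two other columns of the same block, exactly as your $a\bb_{j_1}(\xi')+b\bb_{j_1}(\xi'')$ does). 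Your additional touches (the explicit lower bound showing $R_V\ge R$, the $q=2$ caveat, and the separate essentiality bookkeeping for the $\mathbf{C}$-columns) merely flesh out points the paper leaves implicit.
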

\begin{proof}
The length of the code $V$ directly follows from the construction.

The minimum distance is equal to 3 as the Hamming code is a code with $d=3$.

We show that covering radius $R_V$ of $V$ is equal to $R$.

Consider an arbitrary  column $\tT=(\f\s)\in\F_q^{\,r_0+Rm}$ with $\f\in\F_q^{\,r_0}$, $\s\in\F_q^{Rm}$,\\ $\s=(s_1,s_2,\ldots, s_{Rm})$, $s_i\in\F_q$. We partition $\s$ by $m$-vectors so that $\s=(S_0,S_1,\ldots,S_{R-1})$, $S_v=(s_{vm+1},s_{vm+2},\ldots,s_{vm+m})$, $v=0,1,\ldots,R-1$. We treat $S_v$ as an element of $\F_{q^m}$.

Since $V_{0}$ is an $[n_{0},n_{0}-r_{0}]_{q}R,\ell_0$ code with $\ell_0=R-1$, there exists a linear combination of $\varphi(\f)$ distinct columns of $\Hh_0$ of the form
\begin{align*}
 \f=\sum_{k=1}^{\varphi(\f)} c_k\h_{j_k},~ c_k\in\F^*_q\text{ for all }k, \varphi(\f)\in\{R-1,R\},
 \end{align*}
 see Definition \ref{def1_R,l code}. If $\varphi(\f)=R$ we act similarly to the proof of Theorem \ref{th4_QM1}.

 Let $\varphi(\f)=R-1$.  We represent $\tT$ as a linear combination (with nonzero
coefficients) of at most $R$ distinct columns of $\Hh_V$. We have, see \eqref{eq4_Bj_QM_*}, \eqref{eq4_QM-H_R-1},
\begin{align}\label{eq4_proof_t2}
 \tT=\eta\cc+\sum_{k=1}^{R-1} c_k\bb_{j_k}(x_k),~ c_k\in\F^*_q\text{ and }x_k\in\F_{q^m}\text{ for all }k,~\eta\in\F_q,
\end{align}
where $\cc$ is a column of $\mathbf{C}$ and $\eta=0$ means that the summand $\eta\cc$ is absent. Also, in~\eqref{eq4_proof_t2}, values of $x_k$ are obtained from the linear system
\begin{align*}
 \sum_{k=1}^{R-1} c_k\beta_{j_k}^{v}x_k=S_{v},~v=0,1,\ldots,R-2,
\end{align*}
with nonzero determinant. Finally, in \eqref{eq4_proof_t2}, $\cc=(\mathbf{0}\w$) where $\mathbf{0}$ is the zero $(r_0+(R-1)m)$-positional column and $\w$ is a column of $\W_m$ that satisfies the equality
\begin{align}\label{eq4_etaw}
\eta\w+\sum_{k=1}^{R-1} c_k\beta_{j_k}^{R-1}x_k=S_{R-1}.
\end{align}
In \eqref{eq4_etaw}, if $\sum_{k=1}^{R-1} c_k\beta_{j_k}^{R-1}x_k=S_{R-1}$ we have $\eta=0$. If $\sum_{k=1}^{R-1} c_k\beta_{j_k}^{R-1}x_k\ne S_{R-1}$, the needed column $\eta\w$ always exists as the Hamming code has covering radius 1.

Now we show that $V$  is an $[n,n-(r_{0}+Rm),3]_{q}R,R$ code, i.e. $\ell_V=R$. The critical case is when in \eqref{eq4_proof_t2} and \eqref{eq4_etaw} $\eta=0$, i.e. the summand $\eta\cc$ is absent. We use the approach of the proof of Theorem \ref{th4_QM1} regarding  \eqref{eq4_3word}. In \eqref{eq4_3word} we put $j=j_1,\xi_{u_1}=x_1,a=-c_1$ with $j_1,x_1,c_1$ taken from \eqref{eq4_proof_t2}. Then
\begin{align*}
&\tT=-c_1\bb_{j_1}(x_1)+b\bb_{j_1}(\xi_{u_2})+c\bb_{j_1}(\xi_{u_3})+\sum_{k=1}^{R-1} c_k\bb_{j_k}(x_k)=\sum_{k=2}^{R-1} c_k\bb_{j_k}(x_k)\\
&+b\bb_{j_1}(\xi_{u_2})+c\bb_{j_1}(\xi_{u_3}).
\end{align*}
Thus, we always can represent $\tT\in\F_q^{\,r_0+Rm}$ as a linear combination with nonzero coefficients of exactly $R$ columns of $\Hh_V$.

By above, if we remove any column of $\Hh_V$, some representation of $\tT$ becomes impossible. So, all columns of $\Hh_V$ are essential and the code $V$ is locally optimal.
 \end{proof}

\section{New infinite code families with fixed radius $R\ge4$ and increasing codimension $tR$}\label{sec5_inf_fam}

\looseness=-1 In the $\rho$-saturating set of Construction S \eqref{eq3_constr}--\eqref{eq3_points}, we consider a point $P_j$ (in homogeneous coordinates) as a column $\h_j$ of the parity check matrix $\widehat{\Hh}_\rho$ that defines the\\ $[qR+1,qR+1-2R,3]_{q}R,\ell$  code $\widehat{V}_\rho$ of covering radius $R=\rho+1$. To use Constructions QM$_{1}$ and QM$_{2}$ we show that $\ell=R-1$ if $q$ is even, and $\ell=R$ if $q$ is odd. This means that any column $\f$ of $\F_q^{\,2R}$ is equal to a linear combination with nonzero coefficients
of $R-1$ or $R$ columns of $\widehat{\Hh}_\rho$ for even $q$ and $R$ columns of $\widehat{\Hh}_\rho$ for odd $q$.

 We consider some properties of $\widehat{\Hh}_\rho$  useful to estimate $\ell$.
 Let $\f\in\F_q^{\,2R}$.  Let \\
 $J(\f)=\{\h_{j_1},\ldots,\h_{j_\beta}\}$  and $I_w=\{\h_{i_1},\ldots,\h_{i_w}\}$ be sets of distinct columns of $\widehat{\Hh}_\rho$ such that
  \begin{align}\label{eq5_Jf}
 &\f=\sum\limits_{k=1}^\beta c_k\h_{j_k},~\h_{j_k}\in J(\f)\text{ and } c_k\in\F^*_q\text{ for all }k;\displaybreak[3]\\
 &\label{eq5_Iw}
  \sum\limits_{k=1}^wm_k\h_{i_k}=\mathbf{0},~\h_{i_k}\in I_w \text{ and }m_k\in\F_q^*\text{ for all }k,~\mathbf{0}\in\F_q^{\,2R}\text{ is the zero column}.\displaybreak[3]\\
  &\text{By \eqref{eq5_Jf} and \eqref{eq5_Iw}, we have}\notag\displaybreak[3]\\
  &\label{eq5_Iw+Jf}
 \f=\sum\limits_{k=1}^\beta c_k\h_{j_k}+\mu\sum\limits_{k=1}^wm_k\h_{i_k},~\mu\in\F^*_q.
    \end{align}

Note that $I_w$ is a set of columns corresponding to a w\emph{eight $w$ codeword} of $\widehat{V}_\rho$.

 \looseness=-1   In the representation \eqref{eq5_Iw+Jf}, the number of distinct columns of $\widehat{\Hh}_\rho$, say $\beta^{\text{new}}$, depends on the intersection $I_w\cap J(\f)$ and the values of nonzero coefficients $c_k,m_k,\mu$, for example,
  \begin{align}\label{eq5_beta_new}
\beta^{\text{new}}=\left\{\begin{array}{lcl}
                 \beta+w & \text{if}& I_w\cap J(\f)=\emptyset;\\
                \beta+w-1 &  \text{if}&|I_w\cap J(\f)|=1,~ \h_{j_\beta}=\h_{i_w},~c_\beta+\mu m_w\ne0;\\
                 \beta+w-2 & \text{if}& |I_w\cap J(\f)|=1,~ \h_{j_\beta}=\h_{i_w},~c_\beta+\mu m_w=0;\\
                 \beta+w-2 & \text{if}& |I_w\cap J(\f)|=2,~ \h_{j_\beta}=\h_{i_w},~c_\beta+\mu m_w\ne0,\\
                 &&\hspace{2.69cm}\h_{j_{\beta-1}}=\h_{i_{w-1}},~c_{\beta-1}+\mu m_{w-1}\ne0.
               \end{array}
\right..
  \end{align}
To use \eqref{eq5_Iw+Jf}, \eqref{eq5_beta_new},
 submatrices of $\widehat{\Hh}_\rho$ can be treated as parity check matrices of codes; we call them \emph{component codes} and write in  Table~\ref{tab3_codes}, where $u=1,\ldots,\rho$, ``MDS'' notes a minimum distance separable code, ``AMDS'' says on an Almost MDS code.
\begin{table}[htb]
\caption{Components codes corresponding to submatrices of $\widehat{\Hh}_\rho$ based on \eqref{eq3_constr}--\eqref{eq3_points}}
  \begin{tabular}{@{}c@{\,}|@{~\,}l@{}|@{\,\,}c@{}|@{\,\,}l@{\,}|@{\,}c@{\,}|@{\,}c@{\,}|@{\,}c@{}}\hline
rows of $\widehat{\Hh}_\rho$&columns of $\widehat{\Hh}_\rho^{\vphantom{H^{H^H}}}$&
\renewcommand{\arraystretch}{1.0}$\begin{array}{c}
  \text{geometrical} \\
  \text{object}
\end{array}$
&code parameters &$q$&
\renewcommand{\arraystretch}{0.8}$\begin{array}{@{}c@{}}
  \text{code} \\
  \text{name}
\end{array}$&
\renewcommand{\arraystretch}{0.8}$\begin{array}{@{}c@{}}
  \text{code} \\
  \text{type}
\end{array}$
\\\hline
        1,2 &$\h_1\ldots\h_q$&$\{A_0^0\}\cup L_0^*$&$[q,q-2,3]_q2$& all&$\mathbb{L}_0$&MDS\\\hline
       $2u,2u+1,2u+2$&$\h_{qu+1}\ldots\h_{qu+q-1}$&$C_u^*$&$[q-1,q-4,4]_q3$& all&$\mathbb{C}_u$&MDS \\\hline
       $2u,2u+1,2u+2$&$\h_{qu+1}\ldots\h_{qu+q}$&$C_u^*\cup\{T_u\}$& $[q,q-3,4]_q3$& even&$\mathbb{C}_u^{T^{\vphantom{H}}}$&MDS \\\hline
       $2u,2u+1,2u+2$&$\h_{qu+1}\ldots\h_{qu+q}$&$C_u^*\cup\{T_u\}$& $[q,q-3,3]_q3$& odd&$\mathbb{C}_u^{T^{\vphantom{H}}}$&AMDS \\\hline
       $2\rho,2\rho+1,2\rho+2$&$\h_{q\rho+1}\ldots\h_{q\rho+q-1},$&$C_\rho^*\cup\{A_\rho^\infty\}$& $[q,q-3,4]_q3$&all&$\mathbb{C}_\rho^{\infty}$&MDS \\ 
       &$\h_{q\rho+q+1}$&&&& \\\hline
       $2\rho,2\rho+1,2\rho+2$&$\h_{q\rho+1}\ldots\h_{q\rho+q+1}$&$C_\rho^*\cup\{A_\rho^\infty,T_\rho\}$& $[q+1,q-2,4]_q3$&even&$\mathbb{C}_\rho^{\infty T^{\vphantom{H}}}$&MDS\\\hline
       $2\rho,2\rho+1,2\rho+2$&$\h_{q\rho+1}\ldots\h_{q\rho+q+1}$&$C_\rho^*\cup\{A_\rho^\infty,T_\rho\}$& $[q+1,q-2,3]_q3$&odd&$\mathbb{C}_\rho^{\infty T^{\vphantom{H}}}$&AMDS  \\\hline
  \end{tabular}
     \label{tab3_codes}
\end{table}
\begin{remark}\label{rem5a}
The following is useful to estimate $\ell$ in the   code $\widehat{V}_\rho$.

\textbf{(i)} In an $[n,n-r,d]_{q}$ MDS code, any $d$ columns of a parity check matrix correspond to a weight $d$ codeword \cite{MWS}.

\textbf{(ii)} In an $[n,n-r,d]_{q}$ MDS code with $n\le q$, there are codewords of \emph{all weights}\\ $w\in\{d,d+1,\ldots,n\}$ \cite[Th.\,6]{EzerGrasSoleMDS2011}.

\textbf{(iii)} If $q$ is odd, for AMDS component codes $\mathbb{C}_u^{T}$ and $\mathbb{C}_\rho^{\infty T}$, we note that $T_u$ lies on two tangents to $C_u$ (in  $A_u^0$ and $A_u^\infty$) and on $\frac{q-1}{2}$ bisecants of $C_u^*$. Every of these bisecants gives rise to a weight 3 codeword. The $(q-1)$-set of points of $C_u^*$ is partitioned to $\frac{q-1}{2}$ point pairs; every pair together with $T_u$ forms a weight 3 codeword.

\textbf{(iv)} From the  proofs of Sect. \ref{sec_constr} it can be seen that for the representation of a column $\f\in\F_q^{\,2R}$ it is sufficient to use (for every $u$) at most 3 points (columns) of $C_u^*$. Similarly, one can use  2 points of $\{A_0^0\}\cup L_0^*$. Therefore,  we have in $\{A_0^0\}\cup L_0^*$ and in every $C_u^*$ at least $q-4$ ``free" points (columns) that are not used to represent $\f$; these columns can be used to form sets $I_w$ useful to increase   $\beta^{\text{new}}$ for $\f$ by \eqref{eq5_Iw+Jf}, \eqref{eq5_beta_new}.

\looseness=-1\textbf{(v)} If $\beta<R$ in \eqref{eq5_Jf}, then at least $R-\beta$ component codes are not used to represent $\f$; the columns corresponding to these codes are ``free" and can be used to form sets $I_w$.

\textbf{(vi)}
If $q\ge7$, always  there exists  $\mu$ providing conditions ``$=0$", ``$\ne0$" in~\eqref{eq5_beta_new}.
\end{remark}
\begin{lemma}\label{lem5}
Let $q\ge7$. Let $R\ge4$.   Let\/ $\widehat{V}_\rho$ be the  $[Rq+1,Rq+1-2R,3]_{q}R,\ell$ locally optimal code such that the columns of its parity check matrix $\widehat{\Hh}_\rho$ correspond to  points (in homogeneous coordinates) of the minimal $\rho$-saturating set of Construction S \eqref{eq3_constr}--\eqref{eq3_points} with $\rho=R-1$. Then $\ell=R$ if $q$ is odd and $\ell=R-1$ if $q$ is even.
\end{lemma}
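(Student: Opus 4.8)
The plan is to use the column characterization of $\ell$ from Definition~\ref{def1_R,l code}(ii): I must show that for the claimed value of $\ell$, every $\f\in\F_q^{\,2R}$ admits a representation as a nonzero-coefficient combination of between $\ell$ and $R$ distinct columns of $\widehat{\Hh}_\rho$, and that $\ell$ is the largest value with this property. Since $\widehat{V}_\rho$ has covering radius $R$ by Theorem~\ref{th3_main_geom}, every $\f$ already has a representation $\f=\sum_{k=1}^\beta c_k\h_{j_k}$ with $\beta\le R$, so the whole content is to control the \emph{lower} end of the count. The tool is the ``free column'' technique of \eqref{eq5_Jf}--\eqref{eq5_beta_new}: to a given $J(\f)$ one adjoins $\mu\sum_{k} m_k\h_{i_k}$, where $I_w$ is the support of a codeword, and by \eqref{eq5_beta_new} the new number of distinct columns $\beta^{\text{new}}$ can be made $\beta+w$, $\beta+w-1$ or $\beta+w-2$ according to the overlap $|I_w\cap J(\f)|$ and the choice of $\mu$. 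Remark~\ref{rem5a}(iv),(v) guarantees at least $q-4$ free columns in each used component and a full free component for each missing summand, and Remark~\ref{rem5a}(vi) guarantees (for $q\ge7$) that the ``$=0$''/``$\ne0$'' conditions on $\mu$ are attainable; so the only question is which increments $\beta^{\text{new}}-\beta$ are realizable, and this is exactly where the parity of $q$ enters.

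\textbf{Odd $q$.} First I would read off from Table~\ref{tab3_codes} that for odd $q$ the components $\mathbb{C}_u^{T}=C_u^*\cup\{T_u\}$ and $\mathbb{C}_\rho^{\infty T}$ are AMDS codes with $d=3$, so by Remark~\ref{rem5a}(iii) each $T_u$ together with a bisecant pair of $C_u^*$ supports a weight-$3$ codeword; together with the $d=3$ MDS code $\mathbb{L}_0$ this yields weight-$3$ codewords in every block. Taking $w=3$ in \eqref{eq5_beta_new}, I can therefore raise $\beta$ by $1$, $2$ or $3$ per step (choosing the overlap $|I_3\cap J(\f)|$ and, via Remark~\ref{rem5a}(vi), whether a shared coefficient cancels), so that the increment can always be forced to land exactly on the target without exceeding $R$, until $\beta=R$. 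Hence every $\f$ has a representation with exactly $R$ distinct columns, $\widehat{V}_\rho$ is an $(R,R)$-object, and $\ell=R$.

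\textbf{Even $q$.} Now Table~\ref{tab3_codes} shows that $\mathbb{C}_u^{T}$ and $\mathbb{C}_\rho^{\infty T}$ are hyperoval arcs, i.e.\ MDS with $d=4$, and the nucleus $T_u$ lies on no bisecant of $C_u^*$; thus weight-$3$ codewords are supplied \emph{only} by $\mathbb{L}_0$, while each conic block offers weight-$4$ codewords (any $4$ columns of $C_u^*$, by Remark~\ref{rem5a}(i)). I would first prove $\ell\ge R-1$: using the $\mathbb{L}_0$ weight-$3$ codewords together with the conic weight-$4$ codewords at suitable overlaps in \eqref{eq5_beta_new}, any representation can still be padded up to size $R-1$ or $R$, so $\widehat{V}_\rho$ is an $(R,R-1)$-object. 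It then remains to prove $\ell\ne R$, i.e.\ to exhibit one $\f$ admitting a representation of size $R-1$ but \emph{none} of size exactly $R$. The natural candidate to try is a syndrome that forces the last plane $\pi_\rho$ to be covered through the nucleus direction $T_\rho$: since $T_\rho$ meets $C_\rho$ only in tangents, no weight-$3$ or weight-$4$ codeword can be attached inside that block to add precisely one column, and every other admissible adjustment either leaves $\beta\le R-1$ or overshoots $R$. Combining $\ell\ge R-1$ with $\ell\ne R$ gives $\ell=R-1$.

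\textbf{Main obstacle.} The padding (positive) direction is essentially bookkeeping: it needs only enough free columns (Remark~\ref{rem5a}(iv),(v)) and the tunability of $\mu$ (Remark~\ref{rem5a}(vi)), with the mild extra care that increments must land exactly on the target $R$. The genuinely delicate part is the negative direction for even $q$: one must verify that the chosen witness $\f$ has \emph{no} size-exactly-$R$ representation whatsoever, ruling out every combination of $R$ columns spread across the overlapping ``staircase'' blocks. Because adjacent conic blocks $C_u^*$ and $C_{u+1}^*$ share a coordinate row, overlaps between a candidate $I_w$ and $J(\f)$ must be tracked globally rather than plane by plane, so the argument is a careful case analysis over the hyperoval/tangent structure rather than a single computation. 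This is where I expect the proof to concentrate its difficulty.
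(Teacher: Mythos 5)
Your proposal follows essentially the same route as the paper's proof: the paper likewise pads a given representation $J(\f)$ with zero-sum combinations via \eqref{eq5_Jf}--\eqref{eq5_beta_new} and Remark~\ref{rem5a} (splitting into $\beta\le R-3$, handled by disjoint padding through $\mathbb{L}_0$, and $\beta\in\{R-2,R-1\}$, handled by overlapping codewords), and it obtains the parity dichotomy exactly as you do, from the fact that the components $\mathbb{C}_u^{T}$, $\mathbb{C}_\rho^{\infty T}$ have $d=3$ (AMDS, bisecant pairs through $T_u$) for odd $q$ but $d=4$ (hyperoval, MDS) for even $q$. As for your flagged ``main obstacle,'' the paper treats the negative direction for even $q$ no more rigorously than your sketch: it only remarks that when $\beta=R-1$ and $\mathbb{L}_0$ is unused, no weight-3 codeword is available to increment by exactly one, without constructing and verifying an explicit witness $\f$ lacking any size-$R$ representation.
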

\begin{proof}
We should show that every column $\f$ of $\F_q^{\,2R}$ (including
the zero column) is equal to a linear combination with nonzero coefficients
of $R-1$ or $R$ columns of $\widehat{\Hh}_\rho$ for even $q$ and $R$ columns of $\widehat{\Hh}_\rho$ for odd $q$.

Let $I_w=\{\h_{i_1},\ldots,\h_{i_w}\}$ be a set of distinct columns of $\widehat{\Hh}_\rho$ corresponding to a weight $w$ codeword of an MDS component code.
Then there is a linear combination $L_w=\sum_{k=1}^wm_k\h_{i_k}=\mathbf{0}$, $m_k\in\F_q^*$, cf. \eqref{eq5_Iw}. Let $w_1+w_2+\ldots+w_b=T$. We denote $\Upsilon_T=L_{w_1}+L_{w_2}+\ldots+L_{w_b}=\mathbf{0}$ the sum of the linear combinations.

Let a column $\f\in\F_q^{\,2R}$ have the representation \eqref{eq5_Jf} of the form $\f=\sum_{k=1}^\beta c_k\h_{j_k}$ where $\h_{j_k}\in J(\f)$ and  $\beta\le R$.
If $\beta=R$, the assertions of the lemma hold.

 Let $0\le\beta\le R-3$ where $\beta=0$ corresponds to the zero column. We represent the column as $\f=\sum_{k=1}^\beta c_k\h_{j_k}+\Upsilon_{R-\beta}$ where the linear combinations $L_{w_j}$ of $\Upsilon_{R-\beta}$ consist of ``free" columns that are not used in the set $J(\f)$. We have several ``free" columns, see Remark \ref{rem5a}(iv),(v). The component code $\mathbb{L}_0$ has $d=3$. Therefore, taking into account also Remark \ref{rem5a}(i),(ii),
the sum $\Upsilon_{R-\beta}$ with $3\le R-\beta\le R$ always can be found.

Let $\beta\in\{R-2,R-1\}$. The increase of $\beta$ by $w-1$, $w-2$ is possible if some columns of $J(\f)$ and $I_w$ correspond to the same component code and $|I_w\cap J(\f)|\in\{1,2\}$, see \eqref{eq5_Iw+Jf}, \eqref{eq5_beta_new}. Let $d$ be minimum distance of a component code.  Due to Remark \ref{rem5a}(i),(iii),(iv), one always can take in \eqref{eq5_Iw} a  set $I_w$ with $w=d\in\{3,4\}$  so that $|I_w\cap J(\f)|\in\{1,2\}$. This provides the cases with $w=d=3$, $w-1=2$, $\beta^{\text{new}}=\beta+2$, and $w=d=4$, $w-2=2$, $\beta^{\text{new}}=\beta+2$.

 So, for even and odd $q$,
if $\beta=R-2$, we can obtain  $\beta^{\text{new}}=R$.

Let $\beta=R-1$. The case with $w=3$, $w-2=1$, $\beta^{\text{new}}=\beta+1$, can be provided if some column or a column pair of $J(\f)$ and $I_w$ correspond to the same code $\mathbb{L}_0$ (for all $q$) or to the same code $\mathbb{C}_u^{T}$, $\mathbb{C}_\rho^{\infty T}$ (for $q$ odd) since these codes have $d=3$. There exist columns $\f\in \F_q^{\,2R}$ such that  $\mathbb{L}_0$ is not used for their representation. Therefore we should consider only codes $\mathbb{C}_u^{T}$, $\mathbb{C}_\rho^{\infty T}$. For $q$ odd we always can obtain $\beta^{\text{new}}=R$ using $\mathbb{C}_u^{T}$, $\mathbb{C}_\rho^{\infty T}$ with $d=3$, see Remark \ref{rem5a}(iii).  But in general, for even $q$ (where MDS codes $\mathbb{C}_u^{T}$, $\mathbb{C}_\rho^{\infty T}$ have $d=4$) we are not able to do $\beta^{\text{new}}=R$ when $\beta=R-1$, see~\eqref{eq5_Iw+Jf}, \eqref{eq5_beta_new}.
\end{proof}

In Theorems \ref{th5_odd} and \ref{th5_even} we consider $R\ge4$ since for $R=2,3$, several short covering codes with $r=tR$ are given in detail in  \cite{DavParis,Dav95,DavCovRad2,DFMP-IEEE-LO,DGMP-AMC,DGMP_ACCT2008,DavOst-IEEE2001,DavOstEJC,DavOst-DESI2010}.

\begin{theorem}\label{th5_odd}
  Let $q\ge7$ be odd. Let $t$ be an integer. Then for all $R\ge4$ there is an infinite family of $[n,n-r,3]_qR,R$ locally optimal surface-covering codes with the parameters
  \begin{align*}
    n=Rq^{(r-R)/R}+q^{(r-2R)/R},~ r=tR,~t=2\text{ and }t\ge\lceil\log_q R\rceil+3.
  \end{align*}
\end{theorem}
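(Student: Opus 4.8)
\textbf{Proof proposal for Theorem \ref{th5_odd}.}

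The plan is to produce the claimed family by feeding the starting code $\widehat{V}_\rho$ of Theorem \ref{th3_main_codes} into the $q^m$-concatenating Construction QM$_1$ of Theorem \ref{th4_QM1}, and then iterating. For odd $q\ge7$ and $R\ge4$, Lemma \ref{lem5} tells us that $\widehat{V}_\rho$ with $\rho=R-1$ is an $[Rq+1,Rq+1-2R,3]_qR,\ell$ locally optimal \emph{surface-covering} code, since $\ell=R$ when $q$ is odd. This is exactly the hypothesis needed to apply Construction QM$_1$ (which requires a starting surface-covering code, i.e. $\ell_0=R$). I would therefore set $V_0=\widehat{V}_\rho$, with $n_0=Rq+1$ and $r_0=2R$, and let Theorem \ref{th4_QM1} output a new $[q^mn_0,\,q^mn_0-(2R+Rm),3]_qR,R$ locally optimal surface-covering code.

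The next step is to verify the two arithmetic identities that must hold at each application. First, the codimension: after one step $r=r_0+Rm=2R+Rm=(m+2)R$, so the codimension stays a multiple of $R$, and choosing $t=m+2$ (i.e. $m=t-2$) gives $r=tR$. Second, the length constraint of Construction QM$_1$ requires $q^m\ge n_0-1=Rq$; since $q^{t-2}\ge Rq$ is equivalent to $q^{t-3}\ge R$, i.e. $t-3\ge\log_q R$, this is exactly the condition $t\ge\lceil\log_q R\rceil+3$ appearing in the statement. Under this condition the new length is
\begin{align*}
n=q^mn_0=q^{t-2}(Rq+1)=Rq^{t-1}+q^{t-2}=Rq^{(r-R)/R}+q^{(r-2R)/R},
\end{align*}
which matches the claimed $n$. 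The case $t=2$ is the degenerate one with $m=0$, for which no concatenation is performed and the starting code $\widehat{V}_\rho$ itself already realizes $n=Rq+1=Rq^{(r-R)/R}+q^{(r-2R)/R}$ with $r=2R$, so it is handled directly by Theorem \ref{th3_main_codes} and Lemma \ref{lem5}. The excluded range $3\le t<\lceil\log_q R\rceil+3$ is precisely where $q^m<n_0-1$ fails, so those codimensions are genuinely outside the reach of a single QM$_1$ step from this seed, which is why they are absent from the claim.

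Since a single application of QM$_1$ only reaches $r=(m+2)R$ for one chosen $m$, I would obtain the full infinite family either by varying $m$ over all admissible values, or, more uniformly, by iterating the construction: the output code is again surface-covering ($\ell=R$) and locally optimal by Theorem \ref{th4_QM1}, so it can itself serve as a seed for a further QM$_1$ step, and repeated application sweeps out all $t\ge\lceil\log_q R\rceil+3$. The properties to carry through the induction — covering radius exactly $R$, minimum distance $3$, surface-covering ($\ell=R$), and local optimality — are all preserved by Theorem \ref{th4_QM1}, so the induction is immediate once the base case (from Lemma \ref{lem5}) is in hand. I expect the only genuinely delicate point to be the bookkeeping that guarantees $\ell=R$ for the \emph{odd} seed, but that is already discharged by Lemma \ref{lem5}; the remainder is the routine arithmetic of matching $n$, $r$, and the feasibility bound $q^m\ge n_0-1$ to the stated ranges of $t$.
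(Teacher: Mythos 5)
Your proposal is correct and takes essentially the same route as the paper: it feeds the $[Rq+1,Rq+1-2R,3]_qR,R$ code $\widehat{V}_\rho$ (surface-covering for odd $q$ by Lemma \ref{lem5}) into Construction QM$_1$, applies Theorem \ref{th4_QM1}, and matches parameters via $t=m+2$ and $q^m\ge Rq\Leftrightarrow t\ge\lceil\log_q R\rceil+3$, with the infinite family obtained by varying $m$ — exactly the paper's argument, plus an explicit (and welcome) treatment of the $t=2$ seed case that the paper leaves implicit. One caveat: your alternative of iterating QM$_1$ would not by itself sweep out all admissible $t$, since each further application forces the new $m$ to grow with the seed length, but this does not matter because your primary option (varying $m$ in a single application) already suffices and is what the paper does.
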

\begin{proof}
  We take the $[Rq+1,Rq+1-2R,3]_{q}R,R$ code $\widehat{V}_\rho$, see Lemma \ref{lem5}, as the starting code $V_0$ of Construction QM$_1$. By Theorem \ref{th4_QM1}, we obtain an $[n,n-r,3]_q,R,R$ code with $n=(qR+1)q^m$, $r=2R+mR$. Obviously, $m+1=\frac{r-R}{R}$. The condition $q^m\ge n_0-1$ implies $q^m\ge qR$ whence $m\ge\lceil\log_q R\rceil+1$. Finally, we put $t=m+2$.
 \end{proof}
\begin{theorem}\label{th5_even}
  Let $q\ge8$ be even.  Let\/ $t$ be an integer. Let $m_1=\lceil\log_q (R+1)\rceil+1$. Then for all $R\ge4$ there are infinite families of $[n,n-r,3]_qR,R$ locally optimal surface-covering codes with the parameters
  \begin{align*}
    &\textbf{\emph{(i)}} ~n=Rq^{(r-R)/R}+2q^{(r-2R)/R}+\sum_{j=3}^{t}q^{(r-jR)/R},~r=tR,~m_1+2<t< 3m_1+2;
    \displaybreak[3]\\
    &\textbf{\emph{(ii)}} ~n=Rq^{(r-R)/R}+2q^{(r-2R)/R}+\sum_{j=3}^{m_1+2}q^{(r-jR)/R},\,r=tR,~t=m_1+2 \text{ and }t\ge3m_1+2.
  \end{align*}
\end{theorem}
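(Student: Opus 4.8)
The plan is to use the locally optimal code $\widehat{V}_\rho$ of Lemma \ref{lem5} with $\rho=R-1$ as the seed and to lift its codimension by the $q^m$-concatenating constructions of Section \ref{sec4_qm_concat}. Since $q$ is even, Lemma \ref{lem5} says that $\widehat{V}_\rho$ is an $[Rq+1,Rq+1-2R,3]_qR,R-1$ code, i.e. it has $\ell_0=R-1$; this is precisely the hypothesis required by Construction QM$_2$ (whereas the surface-covering hypothesis of QM$_1$ fails for the seed). The first lifting step is therefore always a single application of QM$_2$, which by Theorem \ref{th4_QM2} returns a locally optimal surface-covering $[\,\cdot\,,\cdot,3]_qR,R$ code; any further lifting can then be done by QM$_1$, whose surface-covering hypothesis is now met.

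For the boundary case $t=m_1+2$ and for the range $m_1+2<t<3m_1+2$ of part (i), I would apply QM$_2$ once with $m=t-2$. The admissibility condition $q^m\ge n_0=Rq+1$ holds because $m=t-2\ge m_1=\lceil\log_q(R+1)\rceil+1$ gives $q^m\ge q\cdot q^{\lceil\log_q(R+1)\rceil}\ge q(R+1)=Rq+q\ge Rq+1$. Theorem \ref{th4_QM2} then yields $r=2R+R(t-2)=tR$ and $n=q^{t-2}(Rq+1)+\frac{q^{t-1}-1}{q-1}$. Expanding the Hamming-length term as a geometric sum and using the identity $q^{t-2}+\frac{q^{t-1}-1}{q-1}=2q^{t-2}+\frac{q^{t-2}-1}{q-1}$ rewrites $n$ into the form $Rq^{(r-R)/R}+2q^{(r-2R)/R}+\sum_{j=3}^{t}q^{(r-jR)/R}$ claimed in part (i); at $t=m_1+2$ the same computation gives the part (ii) formula, since the tail then runs exactly to $j=m_1+2$.

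For the range $t\ge 3m_1+2$ of part (ii), the plan is a composition. I first run QM$_2$ with $m=m_1$ to obtain the surface-covering code $V_1$ of length $n_1=q^{m_1}(Rq+1)+\frac{q^{m_1+1}-1}{q-1}$ and codimension $r_1=R(m_1+2)$, and then run QM$_1$ with $m'=t-m_1-2$ on $V_1$. Theorem \ref{th4_QM1} gives a surface-covering code of codimension $r_1+Rm'=tR$ and length $n=q^{m'}n_1$; multiplying out $q^{t-m_1-2}\left(Rq^{m_1+1}+q^{m_1}+\frac{q^{m_1+1}-1}{q-1}\right)$ and collecting powers of $q$ produces exactly $Rq^{(r-R)/R}+2q^{(r-2R)/R}+\sum_{j=3}^{m_1+2}q^{(r-jR)/R}$, where the geometric tail is now \emph{truncated} at $j=m_1+2$ — this truncation is why the part (ii) bound beats the single-QM$_2$ bound for large $t$. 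Local optimality, minimum distance $3$, and the surface-covering property $(R,R)$ are inherited at every step from Theorems \ref{th4_QM1}, \ref{th4_QM2} and the local optimality of $\widehat{V}_\rho$ (Theorem \ref{th3_main_codes}).

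The step I expect to be the main obstacle is checking that the admissibility condition $q^{m'}\ge n_1-1$ of QM$_1$ holds on the stated range and explains the threshold $t\ge 3m_1+2$. Concretely, one first bounds $n_1<(R+1)q^{m_1+1}$, valid for $q\ge 8$ since $q^{m_1}+\frac{q^{m_1+1}-1}{q-1}<q^{m_1+1}$ reduces to $0<q^2-3q+1$; then $m'=t-m_1-2\ge 2m_1$ yields $q^{m'}\ge q^{2m_1}=q^{m_1-1}\cdot q^{m_1+1}\ge(R+1)q^{m_1+1}>n_1$, using $q^{m_1-1}=q^{\lceil\log_q(R+1)\rceil}\ge R+1$. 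This is exactly where $t\ge 3m_1+2\Leftrightarrow m'\ge 2m_1$ enters. One must also note that for $m_1+2<t<3m_1+2$ the composition would force $m'<2m_1$, for which the QM$_1$ admissibility can fail, so the longer single-QM$_2$ code of part (i) is the best this method provides on that gap. The remaining work — the two geometric-series identities — is routine.
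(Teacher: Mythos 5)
Your proposal is correct and follows essentially the same route as the paper: Lemma \ref{lem5} gives $\ell=R-1$ for even $q$, so the seed $\widehat{V}_\rho$ is lifted once by Construction QM$_2$ (with $m=t-2$) for part (i) and for $t=m_1+2$, and for $t\ge 3m_1+2$ the code obtained at $t=m_1+2$ is re-lifted by Construction QM$_1$, with the same admissibility bounds $n_1<(R+1)q^{m_1+1}$ and $m_2\ge 2m_1$ that the paper uses. Your expansion of the geometric sums and your explanation of the threshold $t\ge 3m_1+2$ match the paper's argument, just written out in more detail.
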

\begin{proof}
  \textbf{(i)} We take the $[qR+1,qR+1-2R,3]_{q}R,\ell$ code $\widehat{V}_\rho$ with $\ell=R-1$, see Lemma \ref{lem5}, as the starting code $V_0$ of Construction QM$_2$. By Theorem \ref{th4_QM2}, we obtain an $[n,n-r,3]_q,R,R$ code with $n=(qR+1)q^m+\frac{q^{m+1}-1}{q-1}$, $r=2R+mR$.  Obviously, $m-(j-2)=\frac{r-jR}{R}$. The condition $q^m\ge n_0$ implies $q^m\ge qR+1$ whence $m\ge\lceil\log_q (qR+1)\rceil=\lceil\log_q (R+1)\rceil+1$. The restriction $m <3m_1$ is introduced as for $m\ge3m_1$ we have codes of (ii) that are better than ones in (i). For $m=m_1$, codes of (i) and (ii) are the same. Finally, we put $t=m+2$.

  \textbf{(ii)} In the relation (i), we put $t=m_1+2$ and obtain an  $[n_1,n_1-r_1,3]_qR,R$ code with $n_1=(qR+1)q^{m_1}+\frac{q^{m_1+1}-1}{q-1}$, $r_1=2R+m_1R$.  We take this code as the starting code $V_0$ of Construction~QM$_1$. By Theorem \ref{th4_QM1}, we obtain an $[n,n-r,3]_q,R,R$ code with $r=2R+m_1R+m_2R$, $q^{m_2}\ge n_1$,
  $  n=n_1q^{m_2}=(qR+1)q^{m_1+m_2}+\sum_{i=0}^{m_1}q^{m_1+m_2-i}.$
 Obviously, $m_1+m_2-i=\frac{r-(i+2)R}{R}$. Since $(R+1)q^{m_1+1}>n_1$, the condition $q^{m_2}\ge n_1$ is satisfied when $q^{m_2}\ge(R+1)q^{m_1+1}$ whence $m_2\ge\lceil\log_q (R+1)\rceil+m_1+1=2m_1$. Then we denote $2+m_1+m_2$ by $t$.
  \end{proof}
\section{New infinite code families with fixed even radius $R\ge2$ and increasing codimension $tR+\frac{R}{2}$}\label{sec_1sat_plane}
In the projective plane $\PG(2,q)$, a \emph{blocking} (resp. \emph{double blocking}) set $S$ is a set of points such that every line of $\PG(2,q)$ contains at least one (resp. two) points of $S$.

There is an useful connection
between double blocking sets and 1-saturating sets.
\begin{proposition}\label{prop6_2fold->1sat}
\emph{\cite[Cor.\,3.3]{DGMP-AMC}, \cite{KKKRS}} Let $q$ be a square. Any double blocking set in
the subplane $\PG(2,\sqrt{q}\,)\subset \PG(2,q)$ is a $1$-saturating set in the
plane $\PG(2,q)$.
\end{proposition}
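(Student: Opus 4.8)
The plan is to unwind Definition~\ref{def1_usual satur}: a set $S\subseteq\PG(2,q)$ is $1$-saturating exactly when every point $A\in\PG(2,q)\setminus S$ is a linear combination of two points of $S$, equivalently $A$ lies on a line joining two distinct points of $S$ (a bisecant of $S$), with $\rho=1$ minimal, i.e. $S$ is not $0$-saturating. Write $B=\PG(2,\sqrt{q}\,)$ for the Baer subplane and let $S\subseteq B$ be a double blocking set, so every line of $B$ carries at least two points of $S$. I would verify the covering condition by splitting according to whether the target point $A$ lies in $B$ or not.

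First, if $A\in B\setminus S$, take any line of the subplane through $A$; as a line of $B$ it meets $S$ in at least two points, which are distinct from $A$ and hence span a line of $\PG(2,q)$ through $A$. So every such $A$ is $1$-covered by $S$. The substantive case is $A\in\PG(2,q)\setminus B$ (so automatically $A\notin S$), and here I would invoke the standard incidence structure of a Baer subplane: $B$ is a blocking set of $\PG(2,q)$, and every line of $\PG(2,q)$ meets $B$ in either $1$ or $\sqrt{q}+1$ points, the long lines being exactly the extensions of the lines of $B$.

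A short double count over the $q+1$ lines through $A$ then pins down how many are long. Since $A\notin B$, each point of $B$ lies on a unique line through $A$, so summing intersection sizes over all lines through $A$ counts $|B|=q+\sqrt{q}+1$ once. If $x$ of these lines meet $B$ in $\sqrt{q}+1$ points and the remaining $q+1-x$ meet $B$ in exactly one point (none in zero points, as $B$ blocks every line), then
\begin{align*}
x(\sqrt{q}+1)+(q+1-x)\cdot 1=q+\sqrt{q}+1,
\end{align*}
whence $x\sqrt{q}=\sqrt{q}$, i.e. $x=1$. Thus through each external point $A$ there passes exactly one line $\ell_A$ meeting $B$ in a full subline; being the extension of a line of $B$, and $S$ being double blocking in $B$, the line $\ell_A$ contains at least two points of $S$, so $A$ is $1$-covered. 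Finally, $S\subsetneq\PG(2,q)$ (it lies in the proper subplane $B$), so $S$ is not $0$-saturating and $\rho=1$ is minimal.

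The main obstacle is the external case, and within it the only nontrivial ingredient is the count $x=1$; this rests entirely on the two classical facts that a Baer subplane blocks every line and is met by every line in $1$ or $\sqrt{q}+1$ points. Once these are granted the conclusion is forced: the unique long line through each external point is a subplane line in disguise, and the double blocking hypothesis delivers the required two points of $S$ on it.
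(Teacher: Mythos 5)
Your proof is correct. Note that the paper does not prove this proposition at all — it is quoted from \cite[Cor.~3.3]{DGMP-AMC} and \cite{KKKRS} — and your argument (split on whether the point lies in the Baer subplane, then use the classical facts that every line meets the subplane in $1$ or $\sqrt{q}+1$ points and hence every external point lies on exactly one extended subplane line, which carries two points of the double blocking set) is exactly the standard argument underlying those citations, so it matches the intended proof in both structure and substance.
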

\noindent In the following we shall use these results:
\begin{proposition}\label{prop6_2block}
\emph{\cite{BacHegSzon,BeuHegSzVoorde,BlokhLovStorSz,DGMP-AMC}}  Let $p$ be prime.  Let $\phi(q)$ be as in \eqref{eq2_phi}.
The following  bounds on the smallest size $\tau_2(2,q)$ of a double blocking set
 in $\PG(2,q)$ hold:
 \begin{align*}
 & \tau_2(2,q)\le2(q+q^{2/3}+q^{1/3}+1),&& q=p^{3},~p\le73    &&\emph{\cite[Th.\,3.5]{DGMP-AMC}};\displaybreak[3]\\
 & \tau_2(2,q)\le2(q+q^{2/3}+q^{1/3}+1),&& q=p^{3h},~p^h\equiv2\bmod7    &&\emph{\cite[Th.\,5.5]{BlokhLovStorSz}};\displaybreak[3]\\
 & \tau_2(2,q)\le2\left(q+\frac{q-1}{\phi(q)-1}\right),&& q=p^h,~h\ge2,~p\ge3&&\text{\emph{\cite[Cor.\,1.9]{BacHegSzon}}};\displaybreak[2]\\
 & \tau_2(2,q)\le2\left(q+\frac{q}{p}+1\right),&& q=p^h,~h\ge2,~p\ge7&&\text{\emph{\cite[Th.\,1.8, Cor.\,4.10]{BeuHegSzVoorde}}}.
 \end{align*}
\end{proposition}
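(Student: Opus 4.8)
Proposition~\ref{prop6_2block} is a compilation of bounds already proved in the literature, so the plan is not to reprove the four inequalities from scratch but to recall the construction underlying each and to check that the stated sizes and parameter ranges agree with the corresponding source.

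The unifying feature is the factor $2$ present in every bound: in each case a double blocking set is obtained as the union of two blocking sets, arranged so that every line of $\PG(2,q)$ is met in at least two of the chosen points, and each inequality therefore reduces to producing a single (minimal) blocking set of half the stated size together with a second one disjoint from it or meeting it suitably. For the first two bounds, with $q=p^3$ and $q=p^{3h}$, the constructions of \cite[Th.\,3.5]{DGMP-AMC} and \cite[Th.\,5.5]{BlokhLovStorSz} produce double blocking sets of size $2(q+q^{2/3}+q^{1/3}+1)$ from subgeometry- and cone-type point sets adapted to the cubic structure of $q$. For the third bound I would appeal to the theory of \emph{linear point sets} of \cite[Cor.\,1.9]{BacHegSzon}: with $\phi(q)$ as in \eqref{eq2_phi}, the quantity $\frac{q-1}{\phi(q)-1}$ is exactly the number of points of a projective subgeometry over the largest proper subfield $\F_{\phi(q)}$, and the blocking set combines a line with such a linear set. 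The fourth bound, \cite[Th.\,1.8,\,Cor.\,4.10]{BeuHegSzVoorde}, rests on an analogous linear construction yielding minimal blocking sets of size $q+\frac{q}{p}+1$.

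The genuinely hard part is contained in the cited papers and is twofold: showing that the proposed union really is \emph{double} blocking, i.e. every line carries at least two of its points, and showing that the total size is as small as claimed. Both points rely on deep finite-geometry machinery---R\'edei polynomials, the structure theory of small minimal blocking sets, and the arithmetic of linear point sets over subfields---which I would not reproduce here. Accordingly, my proof of Proposition~\ref{prop6_2block} amounts to invoking \cite{DGMP-AMC,BlokhLovStorSz,BacHegSzon,BeuHegSzVoorde} line by line and verifying that the hypotheses attached to each bound (the form of $q$, the congruence $p^h\equiv 2\bmod 7$, and the ranges $p\le 73$, $p\ge 3$, $p\ge 7$, $h\ge 2$) coincide with those under which the respective source result is valid.
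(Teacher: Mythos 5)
Your proposal matches the paper exactly: Proposition~\ref{prop6_2block} is stated there without any proof, being purely a compilation of the four bounds cited from \cite{DGMP-AMC}, \cite{BlokhLovStorSz}, \cite{BacHegSzon}, and \cite{BeuHegSzVoorde}, with the parameter ranges carried over from those sources. Your line-by-line invocation of the same references, together with the check that the hypotheses on $q$, $p$, and $h$ agree with each source, is precisely what the paper implicitly relies on, so the proposal is correct and takes the same approach.
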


Now we give a list of 1-saturating sets in the projective plane of square order. The sets (iv)--(vi) are new, they directly follow from Propositions \ref{prop6_2fold->1sat} and \ref{prop6_2block}.

\begin{proposition}\label{prop6_1sat}
  Let $q$ be a square. Let $p$ be prime. Let $\phi(\sqrt{q})$ be as in \eqref{eq2_phi}. Then in $\PG(2,q)$ there are  $1$-saturating sets of the following sizes:
\begin{align*}
&\textbf{\emph{(i)}}~~~ 3\sqrt{q}-1,&&q=p^{2h}\ge4,~h\ge1~~\,\emph{\cite[Th.\,5.2]{Dav95}};\displaybreak[3]\\
&\textbf{\emph{(ii)}}~~2\sqrt{q}+2\sqrt[4]{q}+2,&&q=p^{4h}\ge16,\,h\ge1~\emph{\cite[Th.\,3.3]{DGMP_ACCT2008}, \cite[Th.\,3.4]{DGMP-AMC}, \cite{KKKRS}};\displaybreak[3]\\
&\textbf{\emph{(iii)}}~2\sqrt{q}+2\sqrt[3]{q}+2\sqrt[6]{q}+2,&&q=p^{6},~p\le73\qquad\,\emph{\cite[Th.\,3.4]{DGMP_ACCT2008}, \cite[Cor.\,3.6]{DGMP-AMC}};\displaybreak[3]\\
&\textbf{\emph{(iv)}}~2\sqrt{q}+2\sqrt[3]{q}+2\sqrt[6]{q}+2,&&q=p^{6h},~p^h\equiv2\bmod7;\displaybreak[3]\\
&\textbf{\emph{(v)}}~~2\sqrt{q}+2\frac{\sqrt{q}-1}{\phi(\sqrt{q}\,)-1}\,,&&q=p^{2h},~h\ge2,~p\ge3;\displaybreak[3]\\
&\textbf{\emph{(vi)}}~2\sqrt{q}+2\frac{\sqrt{q}}{p}+2,&&q=p^{2h},~h\ge2,~p\ge7.
 \end{align*}
\end{proposition}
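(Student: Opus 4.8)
The plan is to split the six items into two groups. Items \textbf{(i)}--\textbf{(iii)} are already established in the literature and are exactly the results quoted in the statement: the $(3\sqrt{q}-1)$-set is from \cite{Dav95}, the $(2\sqrt{q}+2\sqrt[4]{q}+2)$-set from \cite{DGMP_ACCT2008,DGMP-AMC,KKKRS}, and the $(2\sqrt{q}+2\sqrt[3]{q}+2\sqrt[6]{q}+2)$-set for $q=p^6$, $p\le73$, from \cite{DGMP_ACCT2008,DGMP-AMC}. For these three no new argument is required; it suffices to cite the corresponding sources.

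The substance is items \textbf{(iv)}--\textbf{(vi)}, and the whole idea is to feed the double blocking bounds of Proposition \ref{prop6_2block} into the subplane-to-plane transfer of Proposition \ref{prop6_2fold->1sat}. Concretely, since $q$ is a square I set $Q=\sqrt{q}$ and pass to the subplane $\PG(2,Q)\subset\PG(2,q)$. By Proposition \ref{prop6_2fold->1sat}, any double blocking set of $\PG(2,Q)$ is automatically a $1$-saturating set of $\PG(2,q)$ of the same cardinality. Hence it is enough, for each of the three parameter regimes, to take the matching upper bound on $\tau_2(2,Q)$ from Proposition \ref{prop6_2block} and rewrite it in terms of $q$ through $Q=\sqrt{q}$.

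First I would treat \textbf{(v)}: here $q=p^{2h}$ with $h\ge2$, $p\ge3$, so $Q=p^{h}$ with $h\ge2$, which is precisely the regime of the third bound of Proposition \ref{prop6_2block}, yielding a double blocking set of $\PG(2,Q)$ of size $2\bigl(Q+\frac{Q-1}{\phi(Q)-1}\bigr)$; substituting $Q=\sqrt{q}$ gives the size $2\sqrt{q}+2\frac{\sqrt{q}-1}{\phi(\sqrt{q}\,)-1}$ of \textbf{(v)}. Item \textbf{(vi)} is identical but uses the fourth bound: for $q=p^{2h}$, $h\ge2$, $p\ge7$, we have $Q=p^{h}$ and a double blocking set of size $2\bigl(Q+\frac{Q}{p}+1\bigr)=2\sqrt{q}+2\frac{\sqrt{q}}{p}+2$. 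Item \textbf{(iv)} uses the second bound: for $q=p^{6h}$ with $p^{h}\equiv2\bmod7$ we get $Q=p^{3h}$ with $p^{h}\equiv2\bmod7$, exactly its hypothesis, and a double blocking set of size $2\bigl(Q+Q^{2/3}+Q^{1/3}+1\bigr)=2\sqrt{q}+2\sqrt[3]{q}+2\sqrt[6]{q}+2$.

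The argument is essentially a single change of variable, so there is no genuinely hard analytic step; the only thing to watch is the bookkeeping of the parameter conditions. Specifically, I must verify in each case that writing $Q=\sqrt{q}$ lands the exponent of $p$ together with the relevant congruence and primality constraints precisely inside the hypothesis of the double blocking bound being invoked — e.g. that $q=p^{2h}$ with $h\ge2$ really forces $Q=p^{h}$ with $h\ge2$ (so that $Q$ is a proper prime power and $\phi(\sqrt{q}\,)$ is well defined in \eqref{eq2_phi}), and that $q=p^{6h}$ forces $Q=p^{3h}$ of the required shape $p^{3h}$ with $p^{h}\equiv2\bmod7$. Once these matchings are confirmed, items \textbf{(iv)}--\textbf{(vi)} follow immediately from Propositions \ref{prop6_2fold->1sat} and \ref{prop6_2block}.
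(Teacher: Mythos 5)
Your proposal is correct and coincides with the paper's own argument: the paper cites (i)--(iii) from the literature and states that (iv)--(vi) follow directly from Propositions \ref{prop6_2fold->1sat} and \ref{prop6_2block}, which is exactly the substitution $Q=\sqrt{q}$ and parameter matching you carry out. Your version merely makes the bookkeeping explicit, which the paper leaves implicit.
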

\begin{remark}\label{rem6_improve}
 In Proposition \ref{prop6_1sat}, if $\sqrt{q}=p^\eta$ with $\eta\ge3$ odd, then \emph{the new $1$-saturating sets} of (iv)--(vi) \emph{have smaller sizes than the known ones} of (i)--(iii).
 For example, if $q=p^6$, $\eta=3$, then the new size of (vi) is $2\sqrt{q}+2\sqrt[3]{q}+2$, cf.\,(iii).
 If $\eta\ge5$ odd, the known sets have size $3\sqrt{q}-1$ whereas new sizes are $2\sqrt{q}+o(\sqrt{q})$.  For example, if $q=p^{30}$, $\eta=15$, then the new size of
 (iv), (v) is $2\sqrt{q}+2\sqrt[3]{q}+2\sqrt[6]{q}+2$, cf.\,(i).
 In general, if $\eta\ge3$ is prime, then the case (vi)  gives smaller sizes than other variants.
If  $\eta$ is odd non-prime, then the variant~(v) is the best.

The case (iv) gives the same size as (v), if $3|\eta$. Therefore, in future we consider new codes and bounds resulting from Proposition \ref{prop6_1sat}(v),(vi).

Note also that if $q=p^2$, i.e. $\eta=1$, then the size (i) is the smallest in Proposition~\ref{prop6_1sat}. It is why we pay attention to this case, see Remarks \ref{rem6_Dav95}--\ref{rem6_2fold_prime} and Problem \ref{probl5} below.
\end{remark}
\begin{remark}\label{rem6_Dav95}
Let a point of $\PG(2,q)$ have the form $(x_0,x_1,x_2)$ where $x_i\in\F_q$, the leftmost nonzero  coordinate is equal to~1. Let $\beta$ be a primitive element of $\F_q$.

In \cite[Th.\,5.2,\,eq.\,(30)]{Dav95}, the following construction of a 1-saturating $(3\sqrt{q}-1)$-set $S$ in $\PG(2,q)$, $q$ square, is proposed:
\begin{align}\label{eq6_Dav95}
S=\{(1,0,x_2)|x_2\in\F_{\sqrt{q}}\}\cup\{(1,0,c\beta)|c\in\F_{\sqrt{q}}^*\}\cup\{(0,1,x_2)|x_2\in\F_{\sqrt{q}}\}.
\end{align}
We describe this construction in more detail than in \cite{Dav95} using, for the description, the Baer sublines similarly to \cite[Prop.\,3.2]{BorSzonTicDefinSets}. In \cite{Dav95}, see \eqref{eq6_Dav95}, specific Baer sublines are noted. Here we explain the structure and role of these specific sublines.
Two Baer subplanes $\mathcal{B}_1$ and $\mathcal{B}_2$ are considered. In the points of $\mathcal{B}_1$, all coordinates $x_i\in\F_{\sqrt{q}}$. Also, $\mathcal{B}_2=\mathcal{B}_1\Phi$ where $\Phi$ is the collineation such that $(x_0,x_1,x_2)\Phi=(x_0,x_1\beta,x_2\beta)$.
Let $L_i\subset\PG(2,q)$ be the ``long'' line of equation $x_i=0$. Let $L_{i,j}=L_i\cap\mathcal{B}_j$ be the Baer subline of $L_i$ in the Baer subplane $\mathcal{B}_j$. We denote points $A_1=(0,0,1)$, $A_2=(1,0,0)$. Obviously, $\{A_1,A_2\}\subset\mathcal{B}_1\cap\mathcal{B}_2$.

We have $L_{0,1}=L_{0,2}$, $\mathcal{B}_1\cap\mathcal{B}_2=L_{0,1}\cup\{A_2\}$. Thus, the  Baer subplanes $\mathcal{B}_1$ and $\mathcal{B}_2$ have the common Baer subline $L_{0,1}$ and also the common point $A_2$ not on $L_{0,1}$. Also, $L_{0,1}\cap L_{1,1}\cap L_{1,2}=\{A_1\}$. So, we consider three Baer sublines through $A_1$; one of them $L_{0,1}$ is common for $\mathcal{B}_1$ and $\mathcal{B}_2$; the other two ($L_{1,1}$ and $L_{1,2}$) belong to the same long line $L_{1}$ that passes through $A_2\notin L_{0,1}$ and $A_1\in L_{0,1}$. The needed set consists of these three Baer sublines without their intersection point, i.e. $S=(L_{0,1}\cup L_{1,1}\cup L_{1,2})\setminus \{A_1\}$. Since  $L_{1,1}\cap L_{1,2}=\{A_1,A_2\}$ it holds that $|S|=3\sqrt{q}-1$. Note that if $A_1$ is not removed from $S$ then we have no bisecants of $S$ through $A_1$.

\looseness=-1 All points on $L_0$ and $L_1$ are 1-covered by $S$. Consider a point $A=(1,a,b)\notin(L_0\cup L_1)$ with $a=a_1\beta+a_0\in\F_q^*,$ $b=b_1\beta+b_0\in\F_q$. (If $a=0$ then $A\in L_1$.) Let $a_0\ne0$. Then $A=(1,0,(b_1-a_1a_0^{-1}b_0)\beta)+a(0,1,a_0^{-1}b_0)$. Let $a_0=0$. Then $a_1\ne0$ and $A=(1,0,b_0)+a(0,1,a_1^{-1}b_1)$. Thus, $A$ is 1-covered by $S$. Also, from the above consideration it follows that all points of $S$ are 1-essential and $S$ is a \emph{minimal} 1-saturating set.
\end{remark}
\begin{remark}
 In \cite[Ex.\,B]{Ughi} and \cite[Prop.\,3.2]{BorSzonTicDefinSets}, constructions of a 1-saturating $3\sqrt{q}$-set in $\PG(2,q)$, $q$ square, are proposed. In \cite{Ughi},  the set is minimal; it consists of three non-concurrent Baer sublines in a Baer subplane. In \cite{BorSzonTicDefinSets}, the set  is non-minimal; it is similar to one of the construction \cite[Th.\,5.2]{Dav95}, see its description in Remark~\ref{rem6_Dav95}. However, in \cite{BorSzonTicDefinSets}, the intersection point of the three Baer sublines is not removed from the 1-saturating set.
\end{remark}
\begin{remark}\label{rem6_2fold_prime}
Let $p$ be prime. To construct a 1-saturating $(3p-1)$-set in $\PG(2,p^2)$ one can apply Proposition \ref{prop6_2fold->1sat} to a double blocking set in $\PG(2,p)$. However, double blocking $(3p-1)$-sets in $\PG(2,p)$ are known only for $q=13,
  19,31,37,43$, see \cite{CsajHeger}. Moreover, in $\PG(2,p)$, no double blocking sets of size less than $3p-1$ are known.
\end{remark}

In $\PG(2,p^2)$, $p$ prime, by \cite[Tab.\,2]{DGMP-AMC}, we have the following sporadic examples of $1$-saturating $k$-sets with $k<3p-1$: $p^2=9,k=6$; $p^2=25,k=12$; $p^2=49,k=18$.

\begin{problem}\label{probl5}
 Develop a general construction of a $1$-saturating $k$-set in $\PG(2,p^2)$, $p$ prime, such that $k<3p-1$.
\end{problem}
In \cite{DavCovRad2,DGMP-AMC}, a lift-construction is given. It provides the following result.
\begin{proposition}\label{prop6_induct}
 \emph{\cite[Ex.\,6]{DavCovRad2}, \cite[Th.\,4.4]{DGMP-AMC}}
  Let an $ [n_{q},n_{q}-3]_{q}2$ code exist. Let $n_q<q$ and $q+1\geq 2n_q$. Let  $f_q(r,2)$ be as in \eqref{eq2_fqrR}. Then there is an infinite family of
$[n,n-r]_{q}2$ codes with odd codimension $r=2t+1\geq3$, $t\ge1$, and length
$  n=n_{q}q^{(r-3)/2}+2q^{(r-5)/2}+f_q(r,2).$
\end{proposition}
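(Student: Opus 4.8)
The plan is to realize the asserted code as a $q$-ary concatenation lift of the given starting code, in the spirit of Constructions QM$_1$ and QM$_2$ (Theorems~\ref{th4_QM1} and \ref{th4_QM2}) but specialized to radius $R=2$ and fitted with an economical correcting set. Writing $r=2t+1$ and $m=(r-3)/2=t-1$, I would attach to each column $\h_j$ of the $3\times n_q$ parity-check matrix $\Hh_0$ of the starting $[n_q,n_q-3]_q2$ code a label $\beta_j\in\F_{q^{m}}$ and form the blocks $\B_j$ of \eqref{eq4_Bj_QM_*} with $R=2$, whose columns are $(\h_j,\xi,\beta_j\xi)^{\top}$, $\xi\in\F_{q^{m}}$. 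These blocks have exactly $3+2m=r$ rows and together contribute the bulk of $n_q q^{m}=n_q q^{(r-3)/2}$ columns, i.e.\ the main term. Since for $R=2$ the zero syndrome cannot be written as a nonzero combination of two projectively distinct columns, the starting code is neither a $(2,2)$- nor a $(2,1)$-object, so neither Construction QM$_1$ nor QM$_2$ applies verbatim; the bulk by itself leaves some syndromes uncovered, and I would append a small \emph{correcting set}, aiming to keep its size down to $2q^{(r-5)/2}=2q^{m-1}$.

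The crux is to verify that the covering radius of the resulting code stays equal to $2$. For an arbitrary target column $\tT\in\F_q^{\,r}$ I would use the covering property of the starting code together with Definition~\ref{def1_R,l code} to express its base part with at most two columns of $\Hh_0$, and then match the extension-field coordinates by solving the small linear system attached to the Vandermonde rows, precisely as in the systems \eqref{eq4_system_Fqm} and \eqref{eq4_system_*}. The syndromes that resist this are the \emph{degenerate} ones — those whose base part vanishes, or whose lift forces a single-column representation — and these must be absorbed by the appended correcting set. Showing that the correcting columns can be chosen so that every degenerate syndrome is still a combination of at most two columns, while spending only $2q^{(r-5)/2}$ of them rather than a full Hamming tail of size $\tfrac{q^{m+1}-1}{q-1}$ (cf.\ Construction QM$_2$ and Proposition~\ref{prop6_2fold->1sat}), is the heart of the argument.

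Here the two hypotheses enter. The bound $n_q<q$ gives the field-size requirement $q^{m}\ge n_q-1$ needed to assign pairwise-distinct labels $\beta_j$, so the bulk is well defined (for $t=1$ it degenerates to $\Hh_0$ itself). The bound $q+1\ge 2n_q$ forces the starting $1$-saturating set to carry enough bisecants — equivalently, enough ``free'' columns — so that the degenerate syndromes can be covered by the economical correcting set. Tallying the columns then gives $n=n_q q^{(r-3)/2}+2q^{(r-5)/2}+f_q(r,2)$, where the extra $f_q(r,2)$ columns of \eqref{eq2_fqrR} are forced precisely at the two sporadic codimensions $r=9,13$, for which the correcting set must be enlarged; the induction on $t$ propagates the radius-$2$ property through each lift, the base case $t=1$ being the starting code itself.

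The hard part will be the economy of the correcting set: proving that $2q^{(r-5)/2}$ extra columns (plus exactly $f_q(r,2)$ in the two exceptional codimensions) suffice to cover all degenerate syndromes without raising the radius above $2$. This demands a careful case analysis at the highest coordinates — distinguishing whether the base contribution is zero, whether the Vandermonde system is solvable with nonzero right-hand side, and whether a needed representation collapses to a single column — and it is exactly in ruling out the collapsing cases that $n_q<q$ and $q+1\ge 2n_q$ are used decisively. I expect the remaining bookkeeping (the length count and the base case) to be routine by comparison.
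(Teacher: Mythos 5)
Your scaffolding does match the lift-construction that the cited sources use (blocks of columns $(\h_j,\xi,\beta_j\xi)$ over $\F_{q^m}$ as in \eqref{eq4_Bj_QM_*} with $R=2$, plus a small correcting block), and your observation that a distance-$3$ radius-$2$ starting code is neither a $(2,2)$- nor a $(2,1)$-object, so that Theorems~\ref{th4_QM1} and \ref{th4_QM2} cannot be invoked verbatim, is correct. But what you have written is a plan, not a proof: every point where the proposition differs from Theorem~\ref{th4_QM2} is exactly a point you defer. You never exhibit the correcting set of size $2q^{(r-5)/2}$, never verify that appending it keeps the covering radius equal to $2$, and never explain why the exceptional terms $f_q(r,2)$ of \eqref{eq2_fqrR} occur precisely at $r=9,13$ and nowhere else. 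This is not bookkeeping. The obvious correcting block (the Hamming tail of Construction QM$_2$) costs $\frac{q^{m+1}-1}{q-1}>2q^{m-1}$ columns, so the claimed economy is the theorem's entire content. Concretely, for a degenerate syndrome $(\f,S_0,S_1)$ whose base part is only representable as $c\h_j$, the residual after spending one column of block $\B_j$ traces, as $\xi$ varies, a line of direction $(1,\beta_j)$ in $\F_{q^m}^2$; the correcting set must therefore meet (up to $\F_q$-scalars) every such line for every label used, and in addition $2$-cover the syndromes with zero base part. Nothing in your text indicates how a $2q^{m-1}$-set can do this, nor how the hypothesis $q+1\ge 2n_q$ enters: your ``enough bisecants'' reading is a guess, not an argument, and no mechanism is given that consumes it.

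There is also a structural flaw in your closing induction. Iterating one-step lifts compounds the corrections: from codimension $3$ one gets length $n_qq+2$ at $r=5$, then $(n_qq+2)q+2=n_qq^2+2q+2$ at $r=7$, already exceeding the claimed $n_qq^2+2q$, and the overshoot grows at each step. So the family cannot be obtained by propagating radius $2$ ``through each lift''; each member must come from a single direct $q^m$-concatenation of the codimension-$3$ starting code, with a correcting set built separately for each $m$ (correction-free intermediate lifts would require the intermediate codes to be $(2,2)$-objects, which your own opening observation rules out for distance-$3$ codes). Note finally that the paper itself does not prove this proposition: it imports it from \cite[Ex.\,6]{DavCovRad2} and \cite[Th.\,4.4]{DGMP-AMC}, where the correcting sets are constructed explicitly; the steps you label ``the heart of the argument'' and leave open are exactly the content of those references.
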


\begin{theorem}\label{th6_codesR=2}
Assume that $p$ is prime, $q=p^{2h},~h\ge2$, and covering radius $R=2$. Let $\phi(\sqrt{q}\,)$ and $f_q(r,2)$ be as in \eqref{eq2_phi}, \eqref{eq2_fqrR}. Then there exist infinite families of $[n,n-r]_{q}2$ codes with odd codimension $r=2t+1\geq3$, $t\ge1$, and length
\begin{align*}
 &n=\left(2+2\frac{\sqrt{q}-1}{\sqrt{q}(\phi(\sqrt{q}\,)-1)}\right)q^{(r-2)/2}+2\lfloor q^{(r-5)/2}\rfloor+f_q(r,2),~p\ge3;\displaybreak[3]\\
 &n=\left(2+\frac{2}{p}+\frac{2}{\sqrt{q}}\right)q^{(r-2)/2}+2\lfloor q^{(r-5)/2}\rfloor+f_q(r,2),~p\ge7.
\end{align*}
\end{theorem}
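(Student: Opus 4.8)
The plan is to take the $1$-saturating sets of Proposition~\ref{prop6_1sat}(v),(vi) as starting (``seed'') codes and then to apply the lift-construction of Proposition~\ref{prop6_induct}. Recall the one-to-one correspondence $\ell_q(r,R)=s_q(r-1,R-1)$: with $R=2$ and $r=3$, a $1$-saturating $n_q$-set in $\PG(2,q)$ is precisely a parity-check matrix of an $[n_q,n_q-3]_q2$ code. Hence, for $q=p^{2h}$ with $h\ge2$, Proposition~\ref{prop6_1sat}(v) furnishes such a code with $n_q=2\sqrt{q}+2\frac{\sqrt{q}-1}{\phi(\sqrt{q}\,)-1}$ when $p\ge3$, and Proposition~\ref{prop6_1sat}(vi) furnishes one with $n_q=2\sqrt{q}+2\frac{\sqrt{q}}{p}+2$ when $p\ge7$. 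Both numbers are integers, since $\sqrt{q}=p^{h}$ and $\phi(\sqrt{q}\,)=p^{h/\ell}$, where $\ell$ is the least prime divisor of $h$, so that $\phi(\sqrt{q}\,)-1\mid\sqrt{q}-1$.

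Next I would verify the two hypotheses $n_q<q$ and $q+1\ge2n_q$ needed by Proposition~\ref{prop6_induct}. Since $\phi(\sqrt{q}\,)\ge p\ge3$ in case (v) and $p\ge7$ in case (vi), in both cases one gets the crude bound $n_q\le3\sqrt{q}$. Setting $x=\sqrt{q}=p^{h}\ge9$, the inequality $x^{2}-6x+1\ge0$ (valid for $x\ge9$) gives $2n_q\le6x\le x^{2}+1=q+1$, and $n_q\le3x<x^{2}=q$. Thus both hypotheses hold for every $q=p^{2h}$ with $h\ge2$; this numeric check, which is exactly where the assumption $h\ge2$ (so $q\ge81$) is used, is the only genuinely restrictive step.

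Finally, feeding each seed code into Proposition~\ref{prop6_induct} yields, for every odd codimension $r=2t+1\ge3$, $t\ge1$, an infinite family of $[n,n-r]_q2$ codes of length $n=n_q\,q^{(r-3)/2}+2\lfloor q^{(r-5)/2}\rfloor+f_q(r,2)$. (Because $r$ is odd, $(r-5)/2=t-2$ is an integer, so the middle term equals $2q^{t-2}$ for $t\ge2$ and vanishes for $t=1$.) It only remains to substitute the two values of $n_q$ and regroup, using the single identity $q^{(r-3)/2}=q^{(r-2)/2}/\sqrt{q}$. In case (v) this turns $n_q\,q^{(r-3)/2}$ into $\bigl(2+2\frac{\sqrt{q}-1}{\sqrt{q}(\phi(\sqrt{q}\,)-1)}\bigr)q^{(r-2)/2}$, and in case (vi) into $\bigl(2+\frac{2}{p}+\frac{2}{\sqrt{q}}\bigr)q^{(r-2)/2}$, while the remaining two terms carry over unchanged. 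This reproduces exactly the two asserted length formulas, so no deeper difficulty arises beyond the bookkeeping above.
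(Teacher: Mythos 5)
Your proposal is correct and takes essentially the same route as the paper: treat the new $1$-saturating sets as parity-check matrices of $[n_q,n_q-3]_q2$ seed codes and lift them with Proposition~\ref{prop6_induct}. You additionally supply the numeric verification of $n_q<q$ and $q+1\ge 2n_q$ that the paper dismisses with ``it can be shown,'' and you correctly identify the seed sets as Proposition~\ref{prop6_1sat}(v),(vi) --- the paper's proof text cites (iii),(iv), which is evidently a typo since those cases do not involve $\phi(\sqrt{q}\,)$ or the stated bound for $p\ge7$.
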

\begin{proof}
  Let $n_q$ be the size of the 1-saturating sets of Proposition \ref{prop6_1sat}(iii),(iv). We treat every point (in homogeneous coordinates) of the set as a column of an $3\times n_q$ parity check matrix of an $[n_q,n_q-3]_q2$ code. For these codes it can be shown that $n_q<q$ and  $q+1\geq 2n_q$. Then we use Proposition~\ref{prop6_induct}.
 \end{proof}
The direct sum construction \cite[Sect.\,4.2]{DGMP-AMC} gives the following lemma.
\begin{lemma}\label{lem6_evenR}
Let covering radius $R\ge2$ be even. Let an $ [n'',n''-r'']_{q}2$ code exist. Then there is an
$[\frac{R}{2}n'',\frac{R}{2}n''-\frac{R}{2}r'']_{q}R$ code.
\end{lemma}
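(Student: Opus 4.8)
The plan is to realize the claimed code as the direct sum of $R/2$ copies of the given $[n'',n''-r'']_q2$ code, and to invoke the classical fact that the covering radius of a direct sum is the sum of the covering radii of the summands. Let $\Hh''$ be an $r''\times n''$ parity-check matrix of the given code and set $k=R/2$; since $R\ge2$ is even, $k\ge1$ is an integer. I would form the block-diagonal matrix
\begin{align*}
\Hh=\left[\begin{array}{cccc}\Hh''&&&\\&\Hh''&&\\&&\ddots&\\&&&\Hh''\end{array}\right],
\end{align*}
with $k$ diagonal copies of $\Hh''$ and zero blocks elsewhere. Then $\Hh$ is a $(kr'')\times(kn'')$ matrix, so the code with parity-check matrix $\Hh$ has length $kn''=\frac{R}{2}n''$ and codimension $kr''=\frac{R}{2}r''$, as required. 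It remains only to show that its covering radius equals $R=2k$.

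For this I would argue via the syndrome characterization of Definition \ref{Def1_CoverRad}(ii). For the upper bound, write an arbitrary syndrome $\f\in\F_q^{\,kr''}$ in block form $\f=(\f_1,\ldots,\f_k)$ with each $\f_i\in\F_q^{\,r''}$. Because the summand has covering radius $2$, each $\f_i$ is a linear combination of at most two columns of $\Hh''$; as the columns of the $i$-th diagonal copy have zero entries in all other blocks, concatenating these representations expresses $\f$ as a linear combination of at most $2k=R$ columns of $\Hh$. Hence every syndrome is covered by at most $R$ columns.

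For the matching lower bound I would exhibit a syndrome that genuinely requires $R$ columns. Since the covering radius of the summand is exactly $2$, there is a column $\mathbf{g}\in\F_q^{\,r''}$ that is not a linear combination of fewer than two columns of $\Hh''$. Consider $\f=(\mathbf{g},\ldots,\mathbf{g})$. In any representation of $\f$ by columns of $\Hh$, the projection onto the $i$-th block is a linear combination only of those chosen columns coming from the $i$-th diagonal copy, and this projection must equal $\mathbf{g}$; hence at least two columns must be chosen from each of the $k$ copies. As the copies occupy disjoint column sets, at least $2k=R$ columns are needed. Therefore the covering radius is exactly $R$, and $\Hh$ defines the desired $[\frac{R}{2}n'',\frac{R}{2}n''-\frac{R}{2}r'']_qR$ code.

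The routine part is the block-diagonal bookkeeping yielding the length, the codimension, and the upper bound $R$ on the covering radius. The one point deserving care is the lower bound: the covering radius must be shown to be \emph{no smaller} than $R$, which rests on the additivity of the covering cost across the disjoint coordinate blocks together with the exactness (not merely the at-most property) of the summand's covering radius.
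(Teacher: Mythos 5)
Your proposal is correct and takes essentially the same approach as the paper: the paper proves Lemma \ref{lem6_evenR} simply by invoking the direct sum construction of \cite[Sect.\,4.2]{DGMP-AMC}, which is exactly the block-diagonal construction with $R/2$ copies of the $[n'',n''-r'']_{q}2$ code that you describe. Your write-up merely supplies the details the paper delegates to that citation, including the correct lower-bound argument via a syndrome $(\mathbf{g},\ldots,\mathbf{g})$ built from a worst-case syndrome of the summand.
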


   \begin{theorem}\label{th6_evenR_r=tR+R/2}
Assume that $p$ is prime, $q=p^{2h},~h\ge2$, $R\ge2$ even, and code codimension is $r=tR+\frac{R}{2}$ with integer $t\ge1$. Let $\phi(\sqrt{q}\,)$ and $f_q(r,R)$ be as in \eqref{eq2_phi},~\eqref{eq2_fqrR}. Then for all even $R\ge2$ there are infinite families of $[n,n-r]_qR$ codes with fixed covering radius $R$, codimension $r=tR+\frac{R}{2}$, $t\ge1$, and length
\begin{align*}
&n= R\left(1+\frac{\sqrt{q}-1}{\sqrt{q}(\phi(\sqrt{q}\,)-1)}\right)q^{(r-R)/R}+R\left\lfloor  q^{(r-2R)/R-0.5}\right\rfloor+\frac{R}{2}f_q(r,R),~p\ge3;\displaybreak[3]\\
&n=R\left(1+\frac{1}{p}+\frac{1}{\sqrt{q}}\right)q^{(r-R)/R}+R\left\lfloor q^{(r-2R)/R-0.5}\right\rfloor+\frac{R}{2}f_q(r,R),~p\ge7.
\end{align*}
  \end{theorem}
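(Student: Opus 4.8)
The plan is to derive these families directly from the two radius-$2$ results already established, namely Theorem~\ref{th6_codesR=2} and the direct sum construction of Lemma~\ref{lem6_evenR}. The guiding observation is that an even radius $R$ is built by stacking $\frac{R}{2}$ copies of a radius-$2$ code via the direct sum, and that an odd codimension $r''=2t+1$ at radius~$2$ lifts to exactly codimension $tR+\frac{R}{2}$ at radius~$R$; moreover the index $t$ is preserved, since $r=tR+\frac{R}{2}$ gives $r''=2r/R=2t+1$.

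First I would invoke Theorem~\ref{th6_codesR=2} to obtain, for $q=p^{2h}$ with $h\ge2$, an $[n'',n''-r'']_q2$ code of odd codimension $r''=2t+1\ge3$ (for $t\ge1$) and length
\begin{align*}
n''=\left(2+2\frac{\sqrt{q}-1}{\sqrt{q}(\phi(\sqrt{q}\,)-1)}\right)q^{(r''-2)/2}+2\lfloor q^{(r''-5)/2}\rfloor+f_q(r'',2),\quad p\ge3,
\end{align*}
together with the analogous length in the case $p\ge7$. Next I would apply Lemma~\ref{lem6_evenR} to this radius-$2$ seed, producing an $[\frac{R}{2}n'',\frac{R}{2}n''-\frac{R}{2}r'']_qR$ code of covering radius $R$. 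Its codimension is $\frac{R}{2}r''=\frac{R}{2}(2t+1)=tR+\frac{R}{2}=r$, which is the required value, and its length is $n=\frac{R}{2}n''$. Since Theorem~\ref{th6_codesR=2} provides a seed for every $t\ge1$, applying the direct sum to each member yields an infinite family for every fixed even $R\ge2$.

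The remaining task is to check that $\frac{R}{2}n''$ equals the stated expression for $n$. This is a routine matching of exponents under the substitution $r''=2r/R=2t+1$. One verifies the identities $(r''-2)/2=(r-R)/R$ and $(r''-5)/2=(r-2R)/R-0.5$, so the main term and the floor term transform as claimed; multiplying the outer constant by $\frac{R}{2}$ turns $2+2(\cdots)$ into $R(1+(\cdots))$ and turns $2\lfloor\cdots\rfloor$ into $R\lfloor\cdots\rfloor$. For the correction term one checks $\frac{R}{2}f_q(r'',2)=\frac{R}{2}f_q(r,R)$: the triggering conditions $r''=9,13$ are equivalent to $r=\frac{9R}{2},\frac{13R}{2}$, and the exponents $(r''-6)/2-0.5$, $(r''-8)/2-0.5$ coincide with $(r-3R)/R-0.5$, $(r-4R)/R-0.5$. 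Substituting these identities reproduces bounds~(i) and~(ii) verbatim, and the $p\ge7$ case is handled identically starting from the corresponding length in Theorem~\ref{th6_codesR=2}.

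I do not expect a genuine obstacle: both ingredients are already in place and the argument is essentially bookkeeping. The only points requiring care are confirming that the hypotheses are met—namely that the radius-$2$ seed of Theorem~\ref{th6_codesR=2} exists precisely on the stated range $q=p^{2h}$, $h\ge2$, and that Lemma~\ref{lem6_evenR} applies for every even $R\ge2$ irrespective of the seed codimension, so that odd $r''$ is admissible. Thus the main ``difficulty'' reduces to the exponent and $f_q$-term verification, which is elementary.
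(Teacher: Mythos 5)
Your proposal is correct and is precisely the paper's own argument: the paper proves this theorem in one line by taking the codes of Theorem~\ref{th6_codesR=2} as the $[n'',n''-r'']_q2$ input codes of Lemma~\ref{lem6_evenR}. Your explicit verification of the codimension identity $\frac{R}{2}(2t+1)=tR+\frac{R}{2}$, the exponent matches, and the $f_q$-term correspondence is sound bookkeeping that the paper leaves implicit.
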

\begin{proof}
  We take codes of Theorem \ref{th6_codesR=2} as the codes $[n'',n''-r'']_{q}2$ of Lemma \ref{lem6_evenR}.
 \end{proof}

\end{document}